\newtheorem{theorem}{Theorem}[section]
\newtheorem{lemma}[theorem]{Lemma}
\newtheorem{proposition}[theorem]{Proposition}
\newtheorem{corollary}[theorem]{Corollary}
\theoremstyle{definition}
\newtheorem{definition}[theorem]{Definition}
\newtheorem{remark}[theorem]{Remark}
\newcommand{\cA}{\mathcal{A}}
\newcommand{\rD}{\mathrm{D}}
\DeclareMathOperator{\colim}{colim}
\newcommand{\cI}{\mathcal{I}}
\newcommand{\cM}{\mathcal{M}}
\newcommand{\cO}{\mathcal{O}}
\newcommand{\bt}{\mathbf{t}}
\newcommand{\F}{\mathbb{F}}
\newcommand{\Ch}{\mathrm{Ch}}
\newcommand{\GL}{\mathrm{GL}}
\newcommand{\SL}{\mathrm{SL}}
\newcommand{\im}{\mathrm{im}}
\newcommand{\id}{\mathrm{id}}
\newcommand{\Cone}{\mathrm{Cone}}
\newcommand{\Aut}{\mathrm{Aut}}
\newcommand{\rR}{\mathrm{R}}
\newcommand{\rL}{\mathrm{L}}
\newcommand{\rH}{\mathrm{H}}
\newtheorem{atheorem}{Theorem}
\newtheorem{aapplication}{Application}
\newtheorem*{theoremCrestated}{Theorem~\ref{athm:CGammaSPB}, restated}
\newtheorem*{theoremDduplicate}{Theorem~\ref{athm:connectivity}}
\newcommand{\fm}{\mathfrak{m}}
\newcommand{\PConf}{\mathrm{PConf}}
\newcommand{\lw}{{\textstyle \bigwedge}}
\newcommand{\into}{\hookrightarrow}
\newcommand{\onto}{\twoheadrightarrow}
\newcommand{\FIMod}{\Mod_{\FI}}
\DeclareMathOperator\FBMod{\Mod_{\FB}}
\newcommand{\op}{\text{op}}
\newcommand{\abs}[1]{\left\lvert #1\right\rvert}
\newcommand{\bwedge}{\lw}
\newcommand{\orient}{\lw}
\newcommand{\iso}{\cong}
\newcommand{\bk}{\mathbf{k}}
\newcommand{\FB}{\mathbf{FB}}
\newcommand{\R}{\mathbb{R}}
\newcommand{\Q}{\mathbb{Q}}
\newcommand{\Z}{\mathbb{Z}}
\newcommand{\m}{\to}
\newcommand{\coloneq}{\mathrel{\mathop:}\mkern-1.2mu=}
\newcommand{\arxiv}[1]{\href{http://arxiv.org/abs/#1}{{\tt arXiv:#1}}}
\newcommand{\mr}[1]{{\rm #1}}
\newcommand{\coker}{\mr{coker}}
\newcommand{\Hom}{\mathrm{Hom}}
\newcommand{\FI}{\mathbf{FI}}
\newcommand{\Set}{\mathbf{Set}}
\newcommand{\Sharp}{\mathbf{FI}\sharp}
\newcommand{\Mod}{\mathrm{Mod}}
\newcommand{\Ind}{\mathrm{Ind}}
\newcommand{\FItwist}{\FI^{\rightsquigarrow}}
\newcommand{\FIarrow}{\FI^{\downarrow}}
\newcommand{\FIarrowMod}{\Mod_{\FIarrow}}
\newcommand{\disjoint}{\sqcup}
\newcommand{\proj}[1]{\overline{#1}}
\newcommand{\innerpull}[1]{\overline{#1}}
\newcommand{\outerpull}[1]{\widetilde{#1}}
\newcommand{\HH}{\mathbf{H}^{\FI}}
\newcommand{\iterDelta}{Q}
\DeclareMathOperator{\SPB}{SPB}
\DeclareMathOperator{\SU}{SU}
\DeclareMathOperator{\SUtilde}{\widetilde{\SU}}
\DeclareMathOperator{\stablerange}{SR}
\newcommand{\SR}{\stablerange}
\newcommand{\TypeA}{Type A}
\newcommand{\TypeB}{Type B}
\title{Linear and quadratic ranges in representation stability}
\date{\today}
\author{Thomas Church}
\address{Department of Mathematics, Stanford University, Stanford, CA}
\email{\href{mailto:tfchurch@stanford.edu}{tfchurch@stanford.edu}}
\urladdr{\url{http://math.stanford.edu/~church/}}
\thanks{Thomas Church was supported in part by NSF grant DMS-1350138 and the Alfred P.\ Sloan Foundation FG-2016-6419. He is grateful to the Institute for Advanced Study for their hospitality during the writing of this paper, and to the Friends of the Institute for their support.}
\author{Jeremy Miller}
\address{Department of Mathematics, Purdue University, West Lafayette, IN}
\email{\href{mailto:jeremykmiller@purdue.edu}{jeremykmiller@purdue.edu}}
\urladdr{\url{https://www.math.purdue.edu/~mill1834/}}
\thanks{Jeremy Miller was supported in part by NSF grant DMS-1709726.}
\author{Rohit Nagpal}
\address{Department of Mathematics, University of Chicago, Chicago, IL}
\email{\href{mailto:nagpal@math.uchicago.edu}{nagpal@math.uchicago.edu}}
\urladdr{\url{http://math.uchicago.edu/~nagpal/}}
\author{Jens Reinhold}
\address{Department of Mathematics, Stanford University, Stanford, CA}
\email{\href{mailto:jreinh@stanford.edu}{jreinh@stanford.edu}}
\urladdr{\url{https://web.stanford.edu/~jreinh/}}
\thanks{Jens Reinhold is supported by the E.\ K.\ Potter Stanford Graduate Fellowship}
\begin{document}

\begin{abstract}  
We prove two general results concerning spectral sequences of $\FI$-modules. These results can be used to significantly improve stable ranges in a large portion of the stability theorems for $\FI$-modules currently in the literature. We work this out in detail for the cohomology of configuration spaces where we prove a linear stable range and the homology of congruence subgroups of general linear groups where we prove a quadratic stable range. Previously, the best stable ranges known in these examples were exponential. Up to an additive constant, our work on congruence subgroups verifies a conjecture of Djament.
\end{abstract}

\maketitle
\tableofcontents

\date{\today}

\maketitle

\section{Introduction}
\label{sec:intro}

$\FI$-modules are a convenient framework for studying stability properties of sequences of symmetric group representations.  An $\FI$-module is a functor from the category of finite sets and injections to the category of $\Z$-modules. In this paper, we introduce two new techniques for proving stability  results for graded sequences of $\FI$-modules which yield improved stable ranges in many examples, including the cohomology of configuration spaces and the homology of congruence subgroups of general linear groups. In all applications, this grading will come from the standard grading on the (co)homology of a sequence of spaces or groups. By stability, we roughly mean a bound on the presentation degree in terms of the (co)homological degree. If there is such a bound which is linear in the (co)homological degree, we say that the sequence exhibits a linear stable range (similarly quadratic, exponential etc).

While previous stability arguments focused on bounding the presentation degree, our proof strategy involves studying two other invariants of an $\FI$-module. We call these invariants \emph{stable degree} and \emph{local degree} and show that these invariants are easier to control in spectral sequences than presentation degree. An $\FI$-module over a ring $\bk$ is a functor from $\FI$ to the category of abelian groups that factors thought the category of $\bk$-modules. Finitely generated $\FI$-modules over fields have dimensions that are eventually equal to a polynomial. The stable degree of a finitely generated $\FI$-module is equal to the degree of this polynomial and the local degree controls when these dimensions become equal to this polynomial.

Together, the stable degree and the local degree control the presentation degree of an $\FI$-module (Proposition~\ref{prop:tot-degree1}). Conversely, the presentation degree can be used to bound these invariants (Proposition~\ref{prop:delta} and Theorem~\ref{thm:local-cohomology}). As a consequence, it is enough to bound the stable and the local degrees to bound the presentation degree and vice-versa. More precisely, we have the following quantitative result (Proposition~\ref{prop:tot-degree1}):

\begin{itemize}
\item[($\star$)]  Let $M$ be an $\FI$-module with stable degree $a$ and local degree $b$. Then the generation degree of $M$ is $\le a+b+1$ and the presentation degree of $M$ is~$\le a+2b+2$. 
\item[($\star \star$)]  Let $M$ be an $\FI$-module with generation degree $a$ and presentation degree $b$. Then the stable degree of $M$ is $\le a$ and the local degree of $M$ is $\le a + b -1$. 
\end{itemize} 
 
The two techniques that we introduce for proving stability for sequences of $\FI$-modules involve two different kinds of spectral sequence arguments which we will call \TypeA{} and \TypeB{}. The main results of this paper are two general theorems, one which establishes linear ranges for \TypeA{} arguments and the other which establishes quadratic ranges for \TypeB{} arguments. We will use these general theorems to prove our linear ranges for cohomology of configuration spaces and quadratic ranges for homology of congruence subgroups. 

We note that stable degree and local degree have respectively been previously studied in \cite{ramos} and \cite{djament-vespa} under different names.

\subsection*{\TypeA{} stability arguments}

In \TypeA{} arguments, one constructs a spectral sequence $$E_r^{p,q} \Longrightarrow M^{p+q}$$ where $M^k$ are the objects of interest and with $E^{p,q}_d$  exhibiting stability for some page $d$. One shows that stability is preserved by the spectral sequence to deduce that the $M^k$ stabilize. This strategy was first used in the context of representation stability by Church who proved representation stability for the rational cohomology of ordered configuration spaces (\cite[Theorem~1]{Ch}). It has also been used to establish homological stability results; see e.g. the work of Kupers--Miller--Tran \cite{KMT}. We prove the following theorem which allows one to establish linear stable ranges for sequences of $\FI$-modules using \TypeA{} stability arguments.

\begin{atheorem} 
\label{theorem:non-quillen}
Let $E^{p,q}_r$ be a cohomologically graded first quadrant spectral sequence of $\FI$-modules  converging to $M^{p+q}$. Suppose that for some $d$, the stable and the local degrees of $E^{p,q}_d$ are bounded linearly in $p$ and $q$. Then the same holds for $E^{p,q}_{\infty}$ and $M^{p+q}$.
\end{atheorem}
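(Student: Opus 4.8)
The plan is to run the comparison in two stages, exploiting the two structures a spectral sequence carries: each page $E^{p,q}_{r+1}$ is a subquotient of $E^{p,q}_{r}$ at the same spot, and the abutment $M^{n}$ is assembled from the $E^{p,q}_{\infty}$ with $p+q=n$ by a finite filtration. At each stage I would check that a bound linear in $p$ and $q$ (respectively in $n$) is preserved. The ingredients are the elementary behaviour of the two invariants: the stable degree is non-increasing under passage to subquotients, and the local degree of a subquotient of $M$ is bounded by a fixed linear function of the stable and local degrees of $M$; while for a short exact sequence $0\to A\to B\to C\to 0$ one has $\delta(B)=\max(\delta(A),\delta(C))$ and the local degree of $B$ is at most the larger of the local degrees of $A$ and $C$ (the last point being immediate from the long exact sequence in local cohomology, in which $\rH^{i}_{\fm}(B)$ is an extension of a submodule of $\rH^{i}_{\fm}(C)$ by a quotient of $\rH^{i}_{\fm}(A)$). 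Equivalently, all of this can be phrased through presentation degree using ($\star$), ($\star\star$) of Proposition~\ref{prop:tot-degree1} together with the Noetherianity of $\FI$-modules.

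For the first stage, fix $(p,q)$. Because the spectral sequence is first quadrant, the differentials out of and into the spot $(p,q)$ both vanish once $r>\max(p,q+1)$, so $E^{p,q}_{\infty}=E^{p,q}_{r}$ for all such $r$; since each page is a subquotient of its predecessor at the same spot, $E^{p,q}_{\infty}$ is a subquotient of $E^{p,q}_{d}$. The stable and local degrees of $E^{p,q}_{d}$ are linear in $p$ and $q$ by hypothesis, so by the subquotient bounds above the same holds for $E^{p,q}_{\infty}$, with a possibly larger but still linear bound.

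For the second stage, I would use the convergence: $M^{n}$ carries a finite filtration $0=F_{-1}\subseteq F_{0}\subseteq\dots\subseteq F_{n}=M^{n}$ with $F_{p}/F_{p-1}\cong E^{p,n-p}_{\infty}$. Assembling $M^{n}$ one extension at a time, the short-exact-sequence behaviour of the invariants gives $\delta(M^{n})=\max_{0\le p\le n}\delta(E^{p,n-p}_{\infty})$, and the local degree of $M^{n}$ is at most the maximum over $0\le p\le n$ of the local degrees of the $E^{p,n-p}_{\infty}$. By the first stage each of these is bounded by a single affine-linear function of its two coordinates, each of which is at most $n$; hence the maxima over the antidiagonal $p+q=n$ are affine-linear in $n$. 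This yields the linear bounds for $M^{n}$, and the first stage has already produced them for $E^{p,q}_{\infty}$.

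The step I expect to be the crux is the local degree in the first stage. Unlike the stable degree, the local degree is genuinely not monotone under subquotients — a quotient can acquire torsion in high degrees — so one cannot argue that it simply does not grow; one really needs the \emph{linear} subquotient estimate (equivalently, one routes through the presentation degree, applies a single Noetherianity bound for submodules of $\FI$-modules, and returns via ($\star$)/($\star\star$)). That this estimate is linear, rather than exhibiting the exponential growth one would get by following presentation degree page by page through the spectral sequence, is exactly what upgrades the conclusion from an exponential to a linear range, and is the whole point of working with stable and local degree in the first place.
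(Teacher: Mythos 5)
Your second stage (assembling $M^n$ from the $E^{p,n-p}_\infty$ via the convergence filtration, using $\delta(B)=\max(\delta(A),\delta(C))$ and $h^{\max}(B)\leq\max(h^{\max}(A),h^{\max}(C))$ for extensions) is correct and is exactly the paper's Proposition~\ref{prop:filtration}. The gap is in the first stage: the lemma you lean on --- that the local degree of a subquotient of $M$ is bounded by a fixed linear function of $\delta(M)$ and $h^{\max}(M)$ --- is false, and no such bound exists. Take $M=\cI(V)$ with $V=\Z$ placed in degree $0$, so $M_n=\Z$ for all $n$ with identity transition maps, $\delta(M)=0$ and $h^{\max}(M)=-1$; the submodule $K$ with $K_n=0$ for $n<N$ and $K_n=\Z$ for $n\geq N$ has $h^{\max}(K)=N-1$ (indeed $\Sigma^{N}K=M$ is induced while $\Sigma^{N-1}K$, being of rank $0,1,1,1,\dots$, is not semi-induced; cf.\ Corollary~\ref{cor:hmax}), and $N$ is arbitrary. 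Your extension estimate only controls the middle term of a short exact sequence by the outer two, never an outer term by the middle one, so it cannot be bootstrapped into a subquotient bound; and the parenthetical escape route through presentation degree and ``a single Noetherianity bound'' also fails, since Noetherianity gives finite generation of submodules with no quantitative control in terms of the invariants of the ambient module (and the paper never assumes finite generation, only finite presentation degree).

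What makes the argument work --- and what the paper does in Propositions~\ref{prop:total-degree-ker-coker} and~\ref{prop:non-quillen} --- is to bound $h^{\max}(E^{p,q}_{r+1})$ not as an abstract subquotient of $E^{p,q}_{r}$ but as $\ker/\im$ of the \emph{specific} differentials, via the estimate $h^{\max}(\ker f),\ h^{\max}(\coker f)\leq\max\bigl(2\delta(A)-2,\ h^{\max}(A),\ h^{\max}(B)\bigr)$ for a map $f\colon A\to B$ (proved there by a local-cohomology double-complex argument). The essential extra input is the local degree of the \emph{target} (resp.\ source) of the differential, i.e.\ of the neighbouring spots on the antidiagonals $p+q\pm 1$; this is precisely the data an abstract subquotient forgets. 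Since $\delta$ genuinely does not increase under subquotients, the $2\delta-2$ terms remain controlled by the page-$d$ data, each page only takes a maximum of finitely many such linear quantities over nearby antidiagonals, and the sequence stabilizes at $(p,q)$ after $\max(p,q+1)$ pages (linearly many in $p+q$), so the resulting bound on $h^{\max}(E^{p,q}_{\infty})$ is still linear. Replacing your first stage by this page-by-page kernel/cokernel analysis (the paper bundles each antidiagonal into $V^k_r=\bigoplus_{p+q=k}E^{p,q}_r$ to simplify the bookkeeping) repairs the proof; the rest of your outline then goes through.
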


We use a quantitative version (Proposition~\ref{prop:non-quillen}) of Theorem~\ref{theorem:non-quillen} to establish a linear stable range for the cohomology of configuration spaces with coefficients in an arbitrary abelian group; we suppress this abelian group from the notation whenever convenient. Given a manifold $\cM$, let $\PConf(\cM)$ denote the $\FI^{\op}$-space sending a set $S$ to the space of embeddings of $S$ into $\cM$ (the $S$-labeled configuration space of $\cM$). Taking cohomology gives an $\FI$-module $\rH^k(\PConf(\cM))$. 


\enlargethispage{\baselineskip}
\begin{aapplication} \label{ap:config}
Suppose $\cM$ is a connected manifold of dimension at least $2$. Then we have: \begin{enumerate}
\item The stable degree of $\rH^k(\PConf(\cM))$ is  $ \le 2k$.
\item The local degree of $\rH^k(\PConf(\cM))$ is $ \le \max(-1, 8 k - 2)$.
\item The generation degree of $\rH^k(\PConf(\cM))$ is  $\le \max(0, 10 k - 1)$.
\item The presentation degree of $\rH^k(\PConf(\cM))$ is $ \le \max(0, 18 k - 2)$.
\end{enumerate} 
The same bounds hold for $\rH^k(\PConf(\cM);\bk)$ with any coefficients. In particular, we have: \begin{enumerate}[label=(\alph*)]
\item  If $\cM$ is finite type and $\bk$ is a field, then there are polynomials $p^{\cM}_{k,\bk}$ of degree at most $2k$  such that $$\dim_{\bk}  \rH^k(\PConf_n(\cM);\bk) = p^{\cM}_{k,\bk}(n)$$ if $n >  \max(-1, 8 k - 2)$.
\item The natural map $$\Ind_{S_{n-1}}^{S_n} \rH^k(\PConf_{n-1}(\cM)) \to  \rH^k(\PConf_n(\cM))$$ is surjective for $n > \max(0, 10 k - 1)$, and the kernel of this map is the image of the difference of the two natural maps $$\Ind_{S_{n-2}}^{S_n} \rH^k(\PConf_{n-2}(\cM)) \rightrightarrows \Ind_{S_{n-1}}^{S_n} \rH^k(\PConf_{n-1}(\cM))$$ for $n > \max(0, 18 k - 2)$.
\end{enumerate} 
\end{aapplication}


We establish even better ranges when $\cM$ is at least $3$-dimensional, orientable, or admits a pair of linearly independent vector fields.  The best previously known bounds away from characteristic zero are due to Miller and Wilson who showed that $\dim_{\bk} \rH^k(\PConf_n(\cM); \bk) $ agrees with a polynomial of degree at most $21(k + 1)(1 + \sqrt 2)^{k-2}$ for $n \geq 49(k + 1)(1 + \sqrt 2)^{k-2}$ \cite[Theorem A.12]{MW}. This was proven using the regularity theorem of  Church and Ellenberg \cite[Theorem A]{castelnuovo-regularity}.

The quantitative version (Proposition~\ref{prop:non-quillen}) of Theorem \ref{theorem:non-quillen} can be used to improve bounds for many other sequences of $\FI$-modules. Such examples include homology groups of the generalized configuration spaces of Petersen \cite{Pe}, the  homology of groups of the singular configuration spaces of Tosteson \cite{Tos}, and homotopy groups  of configuration spaces \cite{KM}.

\subsection*{\TypeB{} stability arguments}

In \TypeB{} arguments, one constructs a spectral sequence $E^r_{p,q}$ where $E^1_{0,q}$ are the objects of interest  and one uses highly acyclic simplicial complexes to prove that the spectral sequence converges to $0$ in a range. One then interprets cancellation in this spectral sequence as stability. This method was introduced by Quillen who, in unpublished work, proved homological stability for certain general linear groups. This was first used in the context of representation stability by Putman \cite{Pu} who proved representation stability for the homology of congruence subgroups. 


To state our result for \TypeB{} stability arguments, we will need more terminology. Let $\rH^{\FI}_0(V)$ denote the so-called \emph{minimal generators} of an $\FI$--module. Concretely, $\rH_0^{\FI}(M)_n$ is the cokernel of \[\Ind_{S_{n-1}}^{S_n} M_{n-1} \m M_n.  \]  Vanishing of $\rH_0^{\FI}(M)$ measures the generation degree of $M$ and vanishing of both $\rH_0^{\FI}(M)$ and its first derived functor $\rH_1^{\FI}(M)$ measure the presentation degree of $M$. These derived functors extend to complexes of $\FI$-modules $M_{\bullet}$ in the standard way; explicitly, the $\FI$-homology $\HH_k(M_\bullet)$ is computed by replacing $M_\bullet$ by a quasi-isomorphic complex of projective $\FI$-modules, applying $\rH_0^{\FI}$, then taking homology of the resulting complex. This should not be confused with $\rH_k(M_{\bullet})$ which denotes the usual homology of the complex $M_\bullet$.

\begin{atheorem} 
\label{athm:growth-bounds}
Let $M_{\bullet}$ be a complex of $\FI$-modules. Suppose $\HH_k(M_{\bullet})_n$ vanishes for $n$ larger than a linear function of $k$. Then we have:
 \begin{enumerate}
\item The stable degree of $\rH_k(M_{\bullet})$ grows at most linearly in $k$.
\item The local degree, generation degree, and presentation degree of $\rH_k(M_{\bullet})$ grow at most quadratically in $k$. 
\end{enumerate}
\end{atheorem}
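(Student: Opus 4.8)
The plan is to run the hypercohomology spectral sequence of the complex $M_\bullet$ in two different ways, using the two invariants (stable degree and local degree) as the bookkeeping devices rather than presentation degree, exactly because $(\star)$ and $(\star\star)$ let us pass freely between the two pictures. Concretely, I would first replace $M_\bullet$ by a quasi-isomorphic complex of projective $\FI$-modules and consider the first-quadrant (homological) spectral sequence whose $E^1$-page computes $\HH_*(M_\bullet)$ in one direction and converges to $\rH_*(M_\bullet)$. The hypothesis says the $\FI$-homology $\HH_k(M_\bullet)$ is concentrated in degrees $n$ below a linear function of $k$; by $(\star\star)$ this means each $\HH_k(M_\bullet)$ has stable degree and local degree bounded linearly in $k$. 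So the input page of the spectral sequence has both invariants linearly bounded in the total degree.

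The heart of the argument is then a stability-under-the-spectral-sequence statement for these two invariants, of precisely the flavor of Theorem~\ref{theorem:non-quillen}: if a page of a first-quadrant spectral sequence of $\FI$-modules has stable and local degrees bounded linearly in $(p,q)$, then so do all later pages and the abutment. For the \emph{stable degree} this is clean: stable degree is sub-additive on subquotients and additive-controlled under taking homology of a complex, and crucially it does not accumulate errors page-to-page, so one gets a linear bound on the stable degree of $\rH_k(M_\bullet)$ — this gives part (1). For the \emph{local degree} the situation is worse: each differential can push the vanishing range out, and on a filtration of length growing linearly in $k$ these shifts compound. Tracking this carefully yields a bound on the local degree of $\rH_k(M_\bullet)$ that is (linear) $\times$ (length of filtration) $=$ quadratic in $k$. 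Feeding the linear stable-degree bound and the quadratic local-degree bound into $(\star)$ then produces quadratic bounds on the generation degree and presentation degree as well, completing part (2).

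I would carry this out in the following steps. First, establish the behavior of stable degree and local degree under the three operations we need: passing to a subquotient, taking homology of a two-term (and hence any bounded) complex, and extensions. Second, phrase the spectral-sequence persistence lemma for the pair (stable degree, local degree) with explicit constants — the stable-degree part with no loss in the "slope", the local-degree part with a loss proportional to the number of pages, i.e. to $\max(p,q)$. Third, apply this to the hypercohomology spectral sequence of $M_\bullet$ with input given by $\HH_*(M_\bullet)$, using $(\star\star)$ to convert the hypothesis into bounds on stable and local degree of the input. Fourth, read off (1) directly, and for (2) combine the quadratic local-degree bound with the linear stable-degree bound via $(\star)$.

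The main obstacle I anticipate is the local-degree bookkeeping in the spectral sequence. Unlike stable degree, local degree genuinely degrades at each page, and the filtration of $\rH_k(M_\bullet)$ has length on the order of $k$ (really on the order of the length of the linear function in the hypothesis), so one must be careful that the per-page loss is a genuine constant — independent of $k$ — and only the \emph{number} of pages grows, so that the total loss is quadratic and not worse. This requires a uniform statement of how a single differential $d_r\colon E_r^{p+r,q-r+1}\to E_r^{p,q}\to E_r^{p-r,q+r-1}$ affects the local degree of the resulting page, phrased so the bound depends only on the local and stable degrees of the source and target in that single bidegree. Once that uniform per-page estimate is in hand, summing the geometric-free arithmetic progression of shifts over the linearly-many relevant pages gives the quadratic bound, and the rest is formal.
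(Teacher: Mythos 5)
There is a genuine gap here, and it sits exactly at the heart of the theorem. The spectral sequence you describe --- one ``whose $E^1$-page computes $\HH_*(M_\bullet)$ in one direction and converges to $\rH_*(M_\bullet)$'' --- does not exist. The relevant hyper-homology spectral sequence runs the other way: $E^2_{p,q}=\rH^{\FI}_p(\rH_q(M_\bullet))\Rightarrow \HH_{p+q}(M_\bullet)$, so the objects you are trying to bound live on the $E^2$-page and the hypothesis constrains the \emph{abutment}. This is a \TypeB{} situation, and a \TypeA{}-style persistence lemma (``input page bounded $\Rightarrow$ abutment bounded,'' as in Theorem~\ref{theorem:non-quillen}) points in the wrong direction: you must argue backwards from vanishing of the abutment to bounds on $E^2_{0,k}$ and $E^2_{1,k}$. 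To do that you must control the differentials $d_r\colon E^r_{r,k-r+1}\to E^r_{0,k}$, i.e.\ the higher syzygies $t_r(\rH_q(M_\bullet))$ for $r\geq 2$ and $q<k$. Bounding those via the regularity theorem in terms of $t_0$ and $t_1$ of the lower homology groups is precisely what produces the \emph{exponential} bounds of \cite[Theorem~D]{castelnuovo-regularity}; your proposal does not explain how to escape this, and the ``uniform per-page estimate'' you hope for reduces to the same problem, since the $E^2$-terms with $p>0$ are torsion modules whose local degree \emph{is} $t_p(\rH_q)$.

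The paper's escape route, which is absent from your proposal, is the shift trick: fix $k$, set $n=1+\max_{q<k}h^{\max}(\rH_q(M_\bullet))$, and replace $M_\bullet$ by $\Sigma^n M_\bullet$. Then every $\rH_q(\Sigma^n M_\bullet)$ with $q<k$ is semi-induced, hence $\FI$-homology acyclic, so the entire region $p>0$, $q<k$ of the $E^2$-page vanishes and there are no differentials to track at all: $E^2_{0,k}=E^\infty_{0,k}$ and $E^2_{1,k}=E^\infty_{1,k}$ on the nose. The crucial enabling fact is Theorem~\ref{thm:shift-of-homology}, that $\bt_i(\Sigma M_\bullet)\leq \bt_i(M_\bullet)$, so the hypothesis survives the shift. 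This yields $\delta(\rH_k)\leq \bt_k$ (part (1), with no loss) and $t_0,t_1$ of $\Sigma^n\rH_k$ bounded by $\bt_k,\bt_{k+1}$, whence $h^{\max}(\rH_k)\leq n+(\text{linear in }k)$. The quadratic growth then comes from the recursion $T_k\leq T_{k-1}+O(k)$ where $T_k=\max_{q\leq k}h^{\max}(\rH_q)$ --- an induction on the \emph{homological degree} $k$, not an accumulation over spectral sequence pages as you suggest. You correctly intuit that the right invariants are $\delta$ and $h^{\max}$ and that the quadratic bound arises from summing linearly many linear losses, but without the shift theorem and the induction on $k$ the argument as proposed would collapse back to the exponential estimates it is meant to improve.
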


See Theorem~\ref{thm:growth-bounds} for a quantitative version of this theorem. The key example we apply these results to is when $M_\bullet=C_\bullet \Gamma$ is the chains on an $\FI$-group $\Gamma$, so that $\rH_k(M_{\bullet})$ is the group homology $\rH_k(M_{\bullet})=\rH_k(\Gamma)$.
In particular, we will apply the quantitative version of Theorem \ref{athm:growth-bounds} (Theorem \ref{thm:growth-bounds}) to congruence subgroups of general linear groups. Given an ideal $I$ in a ring $R$, let $\GL_n(R,I)$ denote the kernel of $\GL_n(R) \m \GL_n(R/I)$. This is called the \emph{level-$I$ congruence subgroup} of $\GL_n(R)$. The groups $\{\GL_n(R,I)\}$ assemble to form an $\FI$-group $\GL(R,I)$ whose homology groups form $\FI$-modules. Theorem \ref{athm:growth-bounds} gives the following.


\begin{aapplication}
\label{athm:congruence-subgroup}
Let $I$ be an  ideal in a ring $R$ satisfying Bass's stable range condition $\SR_{d+2}$. Then we have: \begin{enumerate}
\item\label{b1} The stable degree of $\rH_k(\GL(R,I))$ is  $ \le 2k+d$.
\item\label{b2} The local degree of $\rH_k(\GL(R,I))$ is $ \le 2 k^2 +2(d+2)k + 2(d+1)$.
\item\label{b3} The generation degree of $\rH_k(\GL(R,I))$ is  $\le 2 k^2 + (2d + 6)k + 3 (d+1)$.
\item\label{b4} The presentation degree of $\rH_k(\GL(R,I))$ is $ \le 4 k^2 + (4d + 10) k + 5d +6$. 
\end{enumerate} The same bounds hold for $\rH_k(\GL(R,I);\bk)$ with any coefficients. In particular, we have: \begin{enumerate}[label=(\alph*)]
\item\label{polycong} Let $\bk$ be a field and assume that $\dim_{\bk} \rH_k(\GL_n(R,I); \bk)$ is finite. Then there are polynomials $p^{R,I}_{k,\bk}$ of degree at most $2k+d$  such that $$\dim_{\bk} \rH_k(\GL_n(R,I); \bk) = p^{R,I}_{k,\bk}(n)$$ if $n >  2 k^2 +2(d+2)k + 2(d+1)$.
\item\label{polycoeq} The natural map $$\Ind_{S_{n-1}}^{S_n} \rH_k(\GL_{n-1}(R,I)) \to \rH_k(\GL_n(R,I))$$ is surjective for $n > 2 k^2 + (2d + 6)k + 3 (d+1)$, and the kernel of this map is the image of the difference of the two natural maps $$\Ind_{S_{n-2}}^{S_n} \rH_k(\GL_{n-2}(R,I)) \rightrightarrows \Ind_{S_{n-1}}^{S_n} \rH_k(\GL_{n-1}(R,I))$$ for $n >  4 k^2 + (4d + 10) k + 5d +6$.
\end{enumerate} 

\end{aapplication}

See \cite[Definition 2.19]{Ba} for a definition of Bass' stable range condition. Recall  that any $d$-dimensional Noetherian ring satisfies Bass's stable range condition $\SR_{d+2}$. In particular, fields satisfy $\SR_{2}$ (with $d=0$) and Dedekind domains satisfy $\SR_{3}$ (with $d=1$).  We note that the finiteness condition in Part~(a) of Application~\ref{athm:congruence-subgroup} is satisfied for many classes of ideals, including all ideals in rings of integers in number fields. 

The previously best known stable range for congruence subgroups is due to Church--Ellenberg \cite[Theorem C$'$]{castelnuovo-regularity} who established an exponential stable range. Their result improved upon the work of Putman \cite{Pu} who also established an exponential range with coefficients in a field with characteristic large compared to the homological degree, and the work of Church--Ellenberg--Farb--Nagpal~\cite{fi-noeth} who established an integral result but with no explicit stable range at all. We note that after the release of an initial draft of this paper, Gan and Li established a linear stable range for congruence subgroups. \cite[Theorem~5]{gan-li}. 

Djament conjectured that the stable degree of $\rH_k(\GL(R,I); \bk)$ is $ \le 2k$ in \cite[Conjecture~1]{djament2} (also see  \cite[\S 5.2]{djament-vespa} for further discussion).  Part~(1) of Application~\ref{athm:congruence-subgroup} proves that this stable degree is $\le 2k+d$. Thus, up to an additive constant, Application~\ref{athm:congruence-subgroup} establishes this conjecture. The conjecture was subsequently established by Djament \cite[Theorem 2]{djament3}.

Theorem \ref{athm:growth-bounds} can also be used in other contexts. For example, it can be used to improve the ranges in Patzt and Wu's theorem on the homology of Houghton groups \cite[Theorem B]{PW}.

\subsection*{The complex of mod-$I$ split partial bases}
We prove Application~\ref{athm:congruence-subgroup} by connecting the $\FI$-homology of $\GL(R,I)$ with the \emph{complex of mod-$I$ split partial bases $\SPB_n(R,I)$}. This is a simplicial complex whose  maximal simplices correspond to bases for $R^n$ that are congruent mod $I$ to the standard basis, with the lower-dimensional simplices encoding bases for summands of $R^n$ together with a complement; see Definition~\ref{def:SPB-modI} for a precise definition. (Experts should note that $\SPB_n(R,I)$ may be slightly different than the complex one has in mind; the key feature that distinguishes our definition is that every simplex of $\SPB_n(R,I)$ belongs to an $(n-1)$-simplex.) Together these form an $\FI$-simplicial complex $\SPB(R,I)$ with an action of $\GL(R,I)$, so its $\GL(R,I)$-equivariant homology forms an $\FI$-module.

\begin{atheorem}
\label{athm:CGammaSPB}
For any ring $R$ and any proper ideal $I\subset R$, the $\FI$-homology of the chains on the congruence $\FI$-group $\GL(R,I)$ is computed by the $\GL(R,I)$-equivariant homology of the complex of mod-$I$ split partial bases $\SPB(R,I)$: \[\HH_k(C_\bullet\GL(R,I))\iso \widetilde{\rH}_{k-1}^{\GL(R,I)}(\SPB(R,I))\] for all $k\geq 0$, and similarly for any coefficient group $\bk$.
\end{atheorem}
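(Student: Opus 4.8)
The plan is to identify both sides with the hyperhomology of a single double complex, exploiting the fact that $\FI$-homology $\HH_\bullet$ is a derived functor that can be computed by any projective resolution. The key input is that for each fixed $n$, the complex $\SPB_n(R,I)$ is highly connected (this is the content of Theorem~\ref{athm:connectivity}, the ``$\SPB$'' connectivity result promised in the excerpt), and moreover $\GL_n(R,I)$ acts on $\SPB_n(R,I)$ with the stabilizer of a $p$-simplex being (up to conjugacy and the split structure) a congruence subgroup $\GL_{n-p-1}(R,I)$ together with some extra split data. Concretely, I would build the bar-type double complex $B_{*,*}$ whose $(p,q)$ term is $\bigoplus_{\sigma} C_q(\GL(R,I)_\sigma)$, the sum running over $p$-simplices $\sigma$ of $\SPB(R,I)$, where $C_q$ denotes the $q$-chains of the standard (projective) resolution computing group homology. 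This is an $\FI$-object in double complexes. Taking homology in the simplicial direction first and using the high connectivity of $\SPB_n(R,I)$ in each arity, the $E^1$-page collapses (in the relevant range, which suffices since $\HH_k$ only depends on low-degree information and one can stabilize $n$) so that the total complex is quasi-isomorphic, as a complex of $\FI$-modules, to $C_\bullet\GL(R,I)$ shifted appropriately.

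Next I would run the other spectral sequence of the same double complex, filtering in the group-homology direction first, or more efficiently, I would apply $\HH_\bullet$ (the $\FI$-homology functor) to the whole picture. The crucial point is that each row $\bigoplus_\sigma C_q(\GL(R,I)_\sigma)$, as an $\FI$-module in $\sigma$ of fixed simplicial degree $p$, is a direct sum of modules induced up from the stabilizer data, and these are $\FI$-acyclic in positive $\FI$-homological degree — i.e.\ they behave like (relatively) projective $\FI$-modules for the purposes of $\HH_\bullet$ — because the $\FI$-structure on the set of $p$-simplices is free enough (the ``key feature'' flagged in the excerpt, namely that every simplex lies in an $(n-1)$-simplex, is exactly what guarantees that $\SPB(R,I)$ has the right $\FI$-freeness). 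Hence $\HH_\bullet$ applied to the total complex is computed by first applying $\rH_0^{\FI}$ row-by-row, which turns the sum over all $p$-simplices into a sum over $\GL(R,I)$-orbits of $p$-simplices — and by design there is essentially one orbit — yielding precisely the $\GL(R,I)$-equivariant chain complex of $\SPB(R,I)$. Taking homology gives $\widetilde{\rH}_{k-1}^{\GL(R,I)}(\SPB(R,I))$, with the degree shift $k \leftrightarrow k-1$ coming from the reduced-versus-augmented bookkeeping of the simplicial complex.

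Putting the two spectral sequence computations together with a comparison of filtrations yields the isomorphism $\HH_k(C_\bullet\GL(R,I)) \cong \widetilde{\rH}_{k-1}^{\GL(R,I)}(\SPB(R,I))$, and since every step is additive and base-change compatible, the same argument runs verbatim over any coefficient ring $\bk$. The main obstacle I anticipate is the second step: verifying that the rows of the double complex really are ``$\HH$-acyclic,'' i.e.\ that applying $\rH_0^{\FI}$ to $\bigoplus_{\sigma \text{ a } p\text{-simplex}} C_\bullet(\GL(R,I)_\sigma)$ computes its $\FI$-homology on the nose. This requires a careful description of the $\FI$-module structure on the $p$-simplices of $\SPB(R,I)$ — one must check that, as a functor of the finite set indexing the ``extra'' coordinates beyond a fixed summand, it is a direct sum of representables $\bk[\Hom_{\FI}(-,T)]$ (i.e.\ an $\FI\sharp$-type module or at least a relatively projective one), which is where the precise form of Definition~\ref{def:SPB-modI} and the non-degeneracy condition (every simplex extends to a top simplex) get used. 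Once that structural lemma is in hand, the rest is formal homological algebra with spectral sequences.
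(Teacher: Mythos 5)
There are two genuine gaps here, one in each half of your argument. First, the identification of the total complex with $C_\bullet\GL(R,I)$ cannot work: it would require each $\SPB_n(R,I)$ to be acyclic, whereas Charney's bound only gives acyclicity through dimension roughly $(n-d-3)/2$, and the theorem is an isomorphism of $\FI$-modules --- it must hold for every $n$ and every $k\geq 0$, including $n$ small compared to $k$ where the connectivity range is empty. Your hedge that ``$\HH_k$ only depends on low-degree information and one can stabilize $n$'' is backwards: $\HH_k(M_\bullet)_n$ is determined by the values of $M_\bullet$ on sets of size at most $n$, which is precisely where acyclicity fails. Moreover the connectivity statement you invoke, Theorem~\ref{athm:connectivity}, is a \emph{non}vanishing theorem ($\widetilde{\rH}_{k-1}(\SPB_{2k})\neq 0$) and is itself deduced from Theorem~\ref{athm:CGammaSPB}, so the argument would be circular; and if the complexes really were acyclic, the right-hand side $\widetilde{\rH}_{k-1}^{\GL(R,I)}(\SPB(R,I))$ would vanish identically, so the theorem would be contradicted rather than proved. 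Second, the claim that applying $\rH_0^{\FI}$ to the rows ``turns the sum over all $p$-simplices into a sum over $\GL(R,I)$-orbits'' conflates $\FI$-homology with $\Gamma$-coinvariants: $\rH_0^{\FI}$ kills the images of $\FI$-morphisms from smaller sets, not the group action, and the target $\widetilde{\rH}_{k-1}^{\Gamma}(\SPB(R,I))$ is by definition the homology of $E_\bullet\Gamma\otimes_\Gamma\widetilde{C}_\bullet(\SPB(R,I))$, not the output of any $\FI$-homology functor. Your unreduced double complex would in any case produce $\rH_k^{\Gamma}$ rather than $\widetilde{\rH}_{k-1}^{\Gamma}$; the shift and the reduction come from the augmentation, which your setup omits.

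The mechanism that actually makes the theorem work, and which requires no connectivity input whatsoever, is the Church--Ellenberg Koszul complex: for \emph{any} bounded-below complex $M_\bullet$ of $\FI$-modules, $\rL\rH^{\FI}(M_\bullet)$ is computed by the explicit complex $V(\innerpull{M_\bullet})$ with $V(\innerpull{M})_T=\bigoplus_{T=S\disjoint R}M_S\boxtimes\orient_R$ (Lemma~\ref{lemma:VMbar-homology}); this replaces your unproven ``rows are $\HH$-acyclic'' assertion by an unconditional statement. Taking $M_\bullet=C_\bullet\Gamma$ and applying Shapiro's lemma to rewrite $C_\bullet\Gamma_S\simeq E_\bullet\Gamma_T\otimes_{\Gamma_T}\Z[\Gamma_T/\Gamma_S]$ exhibits $V(\innerpull{C_\bullet\Gamma})$ as $E_\bullet\Gamma\otimes_\Gamma X_\Gamma$, where $X_\Gamma$ is the shifted augmented cellular chain complex of an $\FI$-simplicial complex $Y_\Gamma$; the remaining work is the purely combinatorial identification $Y_{\GL(R,I)}\iso\SPB(R,I)$ (Propositions~\ref{prop:XGammaYGamma} and~\ref{prop:YGammaSPB}), which uses saturation of $\GL(R,I)$ and, crucially, that $I$ is a \emph{proper} ideal --- a hypothesis your proposal never touches, even though the identification with $\SPB$ genuinely fails for $I=R$.
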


Theorem~\ref{athm:CGammaSPB} tells us that to apply Theorem~\ref{athm:growth-bounds} to $\GL(R,I)$, we must bound the homology of $\SPB(R,I)$. Fortunately, Charney studied closely related complexes in \cite{Charney}; her results imply that  that these complexes $\SPB_n(R,I)$ are acyclic in dimensions up to $\frac{n-d-3}{2}$. A surprising consequence of Application~\ref{athm:congruence-subgroup} and Theorem~\ref{athm:CGammaSPB} is that we can prove that Charney's result is very close to \emph{sharp}, at least in certain cases (and probably in many more).
\begin{atheorem} \label{athm:connectivity}
Given any $\ell>0$, for each $k>0$ we have \[\widetilde{\rH}_{k-1}(\SPB_{2k}(\Z/p^\ell,p);\F_p)\neq 0.\]
Given any number ring $\cO$ and any prime power $p^a>2$, for each $k>0$ we have
\[\text{\textrm{either\ \  }} \widetilde{\rH}_{k-1}(\SPB_{2k}(\cO,p^a);\F_p)\neq 0\qquad\text{or}\qquad\widetilde{\rH}_{k-1}(\SPB_{2k+1}(\cO,p^a);\F_p)\neq 0.\]
\end{atheorem}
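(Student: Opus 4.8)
I will first reduce the statement, via Theorem~\ref{athm:CGammaSPB} and Charney's connectivity estimate, to a non-vanishing statement for $\FI$-homology, and then establish that non-vanishing by combining a lower bound on a stable degree with the machinery behind Theorem~\ref{athm:growth-bounds}. Write $G_n=\GL_n(R,I)$, where $(R,I)$ is $(\Z/p^\ell,(p))$ with $\ell\ge 2$ (so that $G_n$ is a nontrivial finite $p$-group) or $(\cO,(p^a))$, and set $d=0$ in the first case, $d=1$ in the second. By Theorem~\ref{athm:CGammaSPB} we have $\HH_k(C_\bullet\GL(R,I);\F_p)_n\cong\widetilde{\rH}_{k-1}^{G_n}(\SPB_n(R,I);\F_p)$. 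Charney's estimate (quoted just before the statement) says $\widetilde{\rH}_j(\SPB_n(R,I);\F_p)=0$ for $j\le\tfrac{n-d-3}{2}$, so in the equivariant homology spectral sequence $\rH_s(G_n;\widetilde{\rH}_t(\SPB_n(R,I);\F_p))\Rightarrow\widetilde{\rH}_{s+t}^{G_n}(\SPB_n(R,I);\F_p)$ the only term that can contribute to total degree $k-1$, once $n\ge 2k+d-1$, is $E^2_{0,k-1}=\widetilde{\rH}_{k-1}(\SPB_n(R,I);\F_p)_{G_n}$, with no room for differentials into or out of it; and this term vanishes once $n\ge 2k+d+1$. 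Hence $\HH_k(C_\bullet\GL(R,I);\F_p)_n=0$ for $n\ge 2k+d+1$, while for $n\ge 2k+d-1$ it is isomorphic to $\widetilde{\rH}_{k-1}(\SPB_n(R,I);\F_p)_{G_n}$. Since a module with nonzero coinvariants is nonzero, it therefore suffices to prove that $\HH_k(C_\bullet\GL(R,I);\F_p)$ is nonzero in degree $2k$ (if $d=0$), respectively in degree $2k$ or $2k+1$ (if $d=1$).

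\textbf{The key input} is the lower bound: the stable degree of $\rH_k(\GL(R,I);\F_p)$ is at least $2k$. To prove this, fix $\pi$ with $I=(\pi)$ and, inside $G_n$, consider the abelian subgroup
\[E_n\ :=\ \Bigl\{\,I+\pi\!\begin{pmatrix}0&X\\0&0\end{pmatrix}\ :\ X\in M_{\lfloor n/2\rfloor\times\lceil n/2\rceil}(R)\,\Bigr\}.\]
Its elements are unipotent, hence lie in $G_n$; since the displayed nilpotent block squares to zero, the product of two such matrices just adds the $X$'s, so $E_n$ is abelian, $\rH^1(E_n;\F_p)\cong\F_p^{N}$ with $N\sim n^2/4$ (multiplied by $[\cO:\Z]$ in the number-ring case), and the subring of $\rH^*(E_n;\F_p)$ generated by $\rH^1(E_n;\F_p)$ has $\F_p$-dimension at least a fixed positive multiple of $n^{2k}$ in degree $k$. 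Now $\pi\in pR$, so commutators in $G_n$ (which are $\equiv I\bmod\pi^2$) and $p$-th powers in $G_n$ (which are $\equiv I\bmod p\pi$) all lie in the deeper congruence subgroup $\GL_n(R,pI)$, whereas $E_n\cap\GL_n(R,pI)=pE_n$. Therefore the map $E_n\otimes\F_p\to G_n^{\mathrm{ab}}\otimes\F_p$ induced by inclusion is injective, so its $\F_p$-dual $\rH^1(G_n;\F_p)\to\rH^1(E_n;\F_p)$ is surjective; and since $\rH^*(-;\F_p)$ is a ring, the image of $\rH^k(G_n;\F_p)\to\rH^k(E_n;\F_p)$ contains all $k$-fold products of classes restricted from $\rH^1(E_n;\F_p)$. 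Consequently $\dim_{\F_p}\rH_k(\GL_n(R,I);\F_p)$ is at least a fixed positive multiple of $n^{2k}$, and as $\rH_k(\GL(R,I);\F_p)$ is a finitely generated $\FI$-module (Application~\ref{athm:congruence-subgroup}) its dimension is eventually polynomial, so the stable degree is $\ge 2k$.

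\textbf{Putting it together.} Suppose, for contradiction, that $\HH_k(C_\bullet\GL(R,I);\F_p)_{2k}=0$ and, in the case $d=1$, also $\HH_k(C_\bullet\GL(R,I);\F_p)_{2k+1}=0$. Together with the vanishing for $n\ge 2k+d+1$ from the first paragraph, this says $\HH_k(C_\bullet\GL(R,I);\F_p)_n=0$ for all $n\ge 2k$; i.e.\ the vanishing line for $\HH_k$ has dropped from the Charney line $n\ge 2k+d+1$ to $n\ge 2k$. The proof of the quantitative form of Theorem~\ref{athm:growth-bounds} is an induction on homological degree; feeding in this improved vanishing line should lower the bound it produces on the stable degree of $\rH_k(\GL(R,I);\F_p)$ from $2k+d$ (the bound of Application~\ref{athm:congruence-subgroup}) to $2k-1$, contradicting the key input. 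I expect this last step to be the main point requiring care: one has to check, by inspecting the inductive proof of the quantitative Theorem~\ref{athm:growth-bounds}, that the stable-degree bound it gives for $\rH_k$ depends on the $\HH_j$-vanishing lines only for $j\le k$, and decreases by at least one each time the degree-$k$ line is lowered by one. Granting this, $\HH_k(C_\bullet\GL(R,I);\F_p)$ is nonzero in the required degree, and by the first paragraph this yields $\widetilde{\rH}_{k-1}(\SPB_{2k}(\Z/p^\ell,p);\F_p)\ne 0$ for all $k>0$, respectively $\widetilde{\rH}_{k-1}(\SPB_{2k}(\cO,p^a);\F_p)\ne 0$ or $\widetilde{\rH}_{k-1}(\SPB_{2k+1}(\cO,p^a);\F_p)\ne 0$ for all $k>0$. (We use $\ell\ge 2$ to make $G_n$ a $p$-group, and take over the hypothesis $p^a>2$ from the regime in which Charney's complex and the congruence-subgroup results of this paper are available.)
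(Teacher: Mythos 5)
Your overall contradiction strategy is the same as the paper's: use Theorem~\ref{athm:CGammaSPB} and Charney's acyclicity to show that vanishing of the displayed homology groups would force $\bt_k(C_\bullet(\GL(R,I);\F_p))\leq 2k-1$, hence $\delta(\rH_k(\GL(R,I);\F_p))\leq 2k-1$ and an eventual polynomial bound $O(n^{2k-1})$, and then contradict this with a lower bound of order $n^{2k}$ on $\dim\rH_k(\GL_n(R,I);\F_p)$. Where you genuinely diverge is in the source of that lower bound. The paper cites Browder--Pakianathan to identify $\rH^*(\GL_n(\Z/p^\ell,p);\F_p)$ with the cohomology of $(\Z/p)^{n^2}$ in Case~A, and in Case~B passes to $\SL$ and combines Lazard's computation of the continuous cohomology of the uniformly powerful group $\SL_n(\cO_p,p^a)$ with Calegari--Emerton's stability for completed homology and the Hochschild--Serre spectral sequence. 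You instead restrict to the explicit abelian unipotent subgroup $E_n\cong M_{\lfloor n/2\rfloor\times\lceil n/2\rceil}(R)$, check that $E_n\cap\GL_n(R,pI)=pE_n$ while $G_n^p[G_n,G_n]\subseteq\GL_n(R,pI)$ (using $\pi\in pR$, so $I^2\subseteq pI$), deduce surjectivity of $\rH^1(G_n;\F_p)\to\rH^1(E_n;\F_p)$, and conclude by multiplicativity of restriction that $\dim\rH^k(G_n;\F_p)\geq\binom{N}{k}=\Theta(n^{2k})$ with $N\sim n^2/4$. This is elementary, self-contained, treats both cases uniformly, and avoids the paper's detour through $\SL$; it delivers exactly the lower bound the paper itself remarks is all that is needed. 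The trade-off is that the paper's citations also give the matching upper bound $\Theta(n^{2k})$ and identify the cohomology ring, which your argument does not (and need not).

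Two remarks. First, the step you flag as "the main point requiring care" requires no inspection of the inductive proof: Theorem~\ref{thm:growth-bounds}\ref{part:deltaLEt} is stated degree-by-degree as $\delta(\rH_k(M_\bullet))\leq\bt_k(M_\bullet)$, so the improved vanishing line $\bt_k\leq 2k-1$ immediately yields $\delta(\rH_k)\leq 2k-1$, and Proposition~\ref{prop:over-fields} converts this into the polynomial bound; this is exactly how the paper argues. Second, your restriction to $\ell\geq 2$ in Case~A is a deviation from the statement as printed ($\ell>0$), but a sensible one: for $\ell=1$ the ideal $(p)$ is zero in $\Z/p$, the congruence group is trivial and $\SPB_n(\Z/p,0)$ is a single simplex, so the paper's own proof (which asserts $\dim\rH^k(\GL_n(\Z/p^\ell,p);\F_p)=\Theta(n^{2k})$) also implicitly requires $\ell\geq 2$.
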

Note that Charney's bound implies all these complexes are $(k-2)$-acyclic, since these rings satisfy $\SR_{3}$, so these are the first nonzero homology groups. 
We prove Theorem~\ref{athm:connectivity} by using known results of Browder--Pakianathan, Lazard, and Calegari to prove the bounds in Application~\ref{athm:congruence-subgroup} are sharp. We then argue that if these complexes were \emph{more} acyclic, we could obtain even \emph{stronger} bounds in Application~\ref{athm:congruence-subgroup}, contradicting these known results. As the proof of Theorem~\ref{athm:connectivity} shows, all that is necessary for a theorem like this is a lower bound $\dim \rH_k(\GL_n(R,I);\F)\ \gg_n\  n^{2k-1}$ for a given $k$ and field $\F$. The restrictions on the rings and ideals here are not essential to the argument; we use them only to deduce such a lower bound from the literature. It is therefore likely that Theorem~\ref{athm:connectivity} holds in  greater generality. 


\subsection*{Outline of the paper} 


In \S \ref{sec:preliminaries}, we recall some basic facts about $\FI$-modules. In \S \ref{sec:local-and-stable-degree}, we prove several properties of stable degree and local degree including that they can be used to bound presentation degree. In \S \ref{secA}, we study \TypeA{} spectral sequence arguments and apply our results to configuration spaces. In \S \ref{secB}, we study \TypeB{} spectral sequence arguments and apply our results to congruence subgroups. In \S \ref{secFIgroup}, we show that the $\FI$-homology of the chains on an $\FI$-group is given by the equivariant homology of a natural $\FI$-simplicial complex. In \S \ref{secSPB}, we identify this simplicial complex in the case of congruence subgroups with the complex of mod-$I$ split partial bases, and prove Theorems~\ref{athm:CGammaSPB} and \ref{athm:connectivity}.





\subsection*{Acknowledgements} The first author is grateful to Mladen Bestvina and Andrew Putman for conversations regarding the homology of complexes of split partial bases. The third author would like to thank Andrew Snowden and Steven Sam for several useful conversations on local cohomology of $\FI$-modules. We are grateful to them for allowing us to include Proposition~\ref{prop:delta}, which originated in joint work of Nagpal, Snowden, and Sam. We thank the anonymous referee for a careful reading of the paper.

\section{Preliminaries on  $\FI$-modules}
\label{sec:preliminaries}
In this section, we review some basic definitions, constructions, and results concerning $\FI$-modules. For more background, see \cite{fimodules}. Also see \cite{symc1} for a discussion of $\FI$-modules from the perspective of twisted commutative algebras. The primary new results in this section are Theorem~\ref{thm:shift-of-homology} and Proposition~\ref{prop:delta}.

\subsection{Induced and semi-induced $\FI$-modules} 
Recall that $\FI$ denotes the category of finite sets and injections. Similarly let $\FB$ denote the category of finite sets and bijections. For any category $\mathcal C$, the term $\mathcal C$-module will mean a functor from $\mathcal C$ to the category of abelian groups and we denote the category of $\mathcal C$-modules by $\Mod_\mathcal C$. Similarly, the term $\mathcal C$-group will mean a functor from $\mathcal C$ to the category of groups.

There is a forgetful functor $\Mod_{\FI} \m \Mod_{\FB}$ and we denote its left adjoint by $\cI:\Mod_{\FB} \m \Mod_{\FI}$. This can be described concretely as follows. Given an $\FI$-module or $\FB$-module $M$, let $M_S$ denote its value on a set $S$ and let $M_n$ denote its value on the standard set of size $n$, $[n] \coloneq \{1, 2, \ldots, n\}$. For an $\FB$-module $V$, we have that:  \[ \cI(V)_S \cong \bigoplus_{n \ge 0} \Z[\Hom_{\FI}([n], S)]\otimes_{\Z[S_n]} V_n.   \] We call $\FI$-modules of the form $\cI(V)$ \emph{induced}; see \cite[Definition~2.2.2]{fimodules} for more details (note that there the notation $M(V)$ is used in place of $\cI(V)$ and they call these modules $\FI \sharp$ instead of induced). The category of $\FI$-modules has enough projectives. The projective $\FI$-modules are exactly the modules of the form $\cI(V)$ with each $V_n$ projective as a $\mathbb Z[S_n]$-module; see \cite[Proposition~2.3.10]{weibel} and \cite[Corollary~9.40]{transformation}.

We say that an $\FI$-module is \emph{semi-induced}\footnote{These were previously been called $\sharp$-filtered $\FI$-modules by Nagpal~\cite{nagpal}. A very similar construction under the name $J$-good functors appeared in Powell~\cite{powell}.} if it admits a finite length filtration where the quotients are induced modules. 
The following is a useful property of semi-induced modules which holds not only for $\FI$-modules but also many other similar functor categories; see for example \cite[Remark~2.33]{ramos} and \cite[Corollary~4.23]{vimod}.

\begin{proposition}
\label{prop:semi-induced-ses} In a short exact sequence of $\FI$-modules, if two of the terms are semi-induced, then so is the third.
\end{proposition}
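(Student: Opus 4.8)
The plan is to exploit the fact that induced $\FI$-modules are acyclic for the derived functors $\HH_\bullet$ (i.e. $\HH_i(\cI(V)) = 0$ for $i > 0$), together with a long exact sequence argument. Given a short exact sequence $0 \to A \to B \to C \to 0$ of $\FI$-modules, I would fit it into the long exact sequence of $\FI$-homology
\[\cdots \to \HH_{i+1}(C) \to \HH_i(A) \to \HH_i(B) \to \HH_i(C) \to \HH_{i-1}(A) \to \cdots\]
(Here $\HH_\bullet$ is the total left derived functor of $\rH_0^{\FI}$ applied to the short exact sequence viewed as a distinguished triangle; since the category of $\FI$-modules has enough projectives, this long exact sequence exists.) The key input I would use is a lemma — which should be available from the cited sources, e.g. \cite[Remark~2.33]{ramos} — characterizing semi-induced modules homologically: an $\FI$-module $N$ that is finitely generated (or more generally of "finite type" in the appropriate sense) is semi-induced if and only if $\HH_i(N) = 0$ for all $i > 0$. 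Granting this, if $B$ and $C$ are semi-induced, then $\HH_i(B) = \HH_i(C) = 0$ for $i>0$, so the long exact sequence forces $\HH_i(A) = 0$ for all $i > 0$ as well (using $\HH_0(B) \to \HH_0(C)$, which is surjective since $B \onto C$, to handle the tail at $i=1$), whence $A$ is semi-induced. The cases "$A, B$ semi-induced $\Rightarrow$ $C$ semi-induced" and "$A, C$ semi-induced $\Rightarrow$ $B$ semi-induced" follow identically from the same long exact sequence.

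Alternatively — and this may be the cleaner route to write up, avoiding any appeal to a homological characterization — I would argue directly by induction on the total length of the induced filtrations, reducing to the case where the relevant terms are themselves \emph{induced} rather than merely semi-induced. For the base cases one needs: (i) a sub-$\FI$-module of an induced module that has induced quotient is semi-induced; (ii) a quotient of an induced module by a semi-induced submodule is semi-induced; (iii) an extension of an induced module by an induced module is semi-induced (this last one is immediate from the definition). Statement (iii) is definitional; (i) and (ii) are where the real content lies, and they can be extracted from the structure theory of induced $\FI$-modules — concretely, from the fact that $\cI$ is left adjoint to the forgetful functor and that $\Hom_{\FI}([n], S)$ is a free $S_n$-set, so that $\cI(V)$ restricted along $\FB \into \FI$ splits as $\bigoplus_n \cI_{\le n}(\cdots)$, giving one enough rigidity to analyze sub- and quotient-modules.

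The main obstacle, and the step requiring the most care, is establishing that a submodule $A$ of an induced $\FI$-module $B = \cI(V)$ with $B/A$ induced is itself semi-induced — equivalently, controlling kernels of maps between induced modules. This is genuinely the heart of Proposition~\ref{prop:semi-induced-ses}; everything else is formal manipulation of exact sequences and filtrations. I expect the resolution to go through the homological criterion (the first approach above): one shows $\HH_{\ge 1}(\cI(V)) = 0$ by an explicit acyclic resolution or by the adjunction $\rH_0^{\FI} \circ \cI \iso \id$ composed with exactness of $\cI$, and then reads off $\HH_{\ge 1}(A) = 0$ from the long exact sequence; the converse direction (vanishing higher $\FI$-homology implies semi-induced, for finite-type modules) is the one genuinely nontrivial lemma, but it is exactly the content cited to \cite{ramos} and \cite{vimod}, so I would invoke it rather than reprove it. Once that lemma is in hand, the proof of the proposition is a two-line diagram chase.
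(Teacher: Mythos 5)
Your long exact sequence strategy does handle two of the three cases, but the claim that the remaining case ``follows identically'' conceals a genuine gap. Write the sequence as $0 \to A \to B \to C \to 0$. When $B$ and $C$ are semi-induced, the segment $\rH_{i+1}^{\FI}(C) \to \rH_i^{\FI}(A) \to \rH_i^{\FI}(B)$ forces $\rH_i^{\FI}(A)=0$ for all $i\geq 1$; when $A$ and $C$ are semi-induced you likewise get $\rH_i^{\FI}(B)=0$ for $i\geq 1$; combined with Theorem~\ref{thm:homology-acyclics}(2) these two cases go through. But in the case where $A$ and $B$ are semi-induced and you want $C=B/A$ semi-induced, the long exact sequence only yields $\rH_i^{\FI}(C)=0$ for $i\geq 2$ and identifies
\[\rH_1^{\FI}(C)\iso\ker\bigl(\rH_0^{\FI}(A)\to \rH_0^{\FI}(B)\bigr),\]
and there is no a priori reason for this kernel to vanish: an injection of $\FI$-modules need not induce an injection on minimal generators. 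Proving that it \emph{does} vanish when $A$ and $B$ are semi-induced is exactly the nontrivial content that the paper outsources to Djament (Proposition~A.8 and Theorem~A.9 of that reference), and your proposal supplies no argument for it. Your alternative sketch by induction on filtration length does not close the loop either, since you explicitly defer its hard base cases (``a sub-$\FI$-module of an induced module with induced quotient,'' etc.) back to the homological criterion, which is where the circle fails.

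Two smaller points. First, Theorem~\ref{thm:homology-acyclics}(2) converts acyclicity into semi-inducedness only for modules \emph{presented in finite degrees}, so in each case you must also verify that the third term is presented in finite degrees before invoking it (this does follow from the same long exact sequence, but it needs to be said; and strictly speaking the proposition allows induced modules $\cI(V)$ with $V$ supported in infinitely many degrees, to which the criterion does not apply). Second, for calibration: the paper's own proof is a pure citation --- two cases to Djament, one to Ramos --- so you are not expected to reprove Djament's result, but you do need to invoke it (or independently prove injectivity of $\rH_0^{\FI}(A)\to\rH_0^{\FI}(B)$ for an inclusion of semi-induced modules) rather than assert that the quotient case is formal.
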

\begin{proof} Two of the three cases are proven in \cite[Proposition~A.8, Theorem~A.9]{djament}. The remaining case is an  immediate corollary of \cite[Theorem~B]{ramos}. 
\end{proof}

\subsection{$\FI$-homology}
\label{subsec:koszul-intro}

Any $\FB$-module can be upgraded to an $\FI$-module by declaring that all injections $S \to T$ which are not bijections act as the zero map. This assignment gives a functor $\FBMod \to \FIMod$ which admits a left adjoint that we will denote by $\rH^{\FI}_0$ and call \emph{$\FI$-homology}. Since $\rH^{\FI}_0$ admits a right adjoint, it is right-exact. We denote the total left-derived functor of  $\rH^{\FI}_0$ by $\rL \rH^{\FI}$ and denote the $i$th left-derived functor by $\rH_i^{\FI}(M)$. Often, we will consider $\rH_i^{\FI}(M)$ as an $\FI$-module by post-composing with the functor $\FBMod \to \FIMod$ described above.


\begin{definition}
The \emph{degree} of a non-negatively graded abelian group $M$ is the smallest integer $d \ge -1$, denoted $\deg M$,  such that  $M_n = 0$ for $n>d$. Evaluating an $\FI$-module or $\FB$-module $M$ on the standard sets $[n]$ gives a non-negatively graded abelian group, and so we can make sense of $\deg M$. Let $t_i(M) \coloneq \deg \rH_i^{\FI}(M)$. We call $t_0(M)$ the \emph{generation degree} of $M$ and call $\max(t_0(M),t_1(M))$ the \emph{presentation degree} of $M$. We say that an $\FI$-module $M$ is \emph{presented in finite degrees} if $t_0(M)<\infty$ and $t_1(M)<\infty$.
\end{definition}
The generation degree and presentation degree as in our definition above bounds the smallest possible degrees of generators and relations in any presentation for $M$ \cite[Proposition~4.2]{castelnuovo-regularity}. 

\begin{theorem}
\label{thm:homology-acyclics}
We have the following: \begin{enumerate}[label={(\arabic*)}]
\item\label{part:abeliancategory} The category of $\FI$-modules presented in finite degrees is abelian; in other words, for any map between $\FI$-modules presented in finite degrees, the kernel and cokernel are also presented in finite degrees.
\item An $\FI$-module presented in finite degrees is $\FI$-homology acyclic if and only if it is semi-induced.
\end{enumerate}
\end{theorem}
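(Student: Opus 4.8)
The plan is to prove the two parts together, since part~\ref{part:abeliancategory} and the "only if" direction of part~(2) feed into each other. First I would dispense with the easy directions and set up the machinery. The "if" direction of part~(2) is immediate: an induced module $\cI(V)$ is $\FI$-homology acyclic because $\rH^{\FI}_0$ is computed by a projective resolution, and one checks that $\rH^{\FI}_i(\cI(V))=0$ for $i>0$ (this is essentially the content of $\FI\sharp$-modules being $\rH^{\FI}_0$-acyclic; one reduces to the case where each $V_n$ is free over $\Z[S_n]$, where $\cI(V)$ is already projective). Then one propagates acyclicity along the finite filtration of a semi-induced module using the long exact sequence in $\FI$-homology. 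Next I would record the basic finiteness statement: if $M$ is presented in finite degrees, then each $\rH^{\FI}_i(M)$ has finite degree, with an explicit (at worst exponential, but finite) bound in terms of $t_0(M)$ and $t_1(M)$; this is the regularity-type input and can be cited from Church--Ellenberg if needed, but all I really need here is finiteness.

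For part~\ref{part:abeliancategory}: given a map $f\colon M\to N$ of $\FI$-modules presented in finite degrees, I want $\ker f$ and $\coker f$ presented in finite degrees. The cokernel is easy since $\rH^{\FI}_0$ is right exact: $\rH^{\FI}_0(\coker f)$ is a quotient of $\rH^{\FI}_0(N)$, hence has degree $\le t_0(N)<\infty$; and $\rH^{\FI}_1(\coker f)$ fits (via the long exact sequence for $0\to \im f\to N\to \coker f\to 0$) between $\rH^{\FI}_1(N)$ and $\rH^{\FI}_0(\im f)$, so it suffices to bound $t_0(\im f)$, which again is $\le t_0(M)$ since $\im f$ is a quotient of $M$. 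The kernel is the delicate case: from $0\to \ker f\to M\to \im f\to 0$ one gets $\rH^{\FI}_0(\ker f)$ squeezed between $\rH^{\FI}_1(\im f)$ and $\rH^{\FI}_0(M)$, so I need $t_1(\im f)<\infty$, i.e. I need $\im f$ to be presented in finite degrees. But $\im f$ is a submodule of $N$ and also a quotient of $M$; one shows a quotient of a module presented in finite degrees is presented in finite degrees (same long-exact-sequence argument applied to the kernel of $M\onto\im f$ — which is circular unless handled carefully), so the honest approach is an induction: prove by induction on $t_0$ that every $\FI$-module $M$ with $t_0(M)<\infty$ and $t_1(M)<\infty$ has all $t_i(M)<\infty$, and that submodules and quotients stay presented in finite degrees, by using the standard "shift" or "derivative" functor that decreases generation degree. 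This is where the shift functor and Theorem~\ref{thm:shift-of-homology} should be invoked.

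For the "only if" direction of part~(2): suppose $M$ is presented in finite degrees and $\FI$-homology acyclic, i.e. $\rH^{\FI}_i(M)=0$ for all $i$. I want to build a finite filtration with induced quotients. The strategy is to peel off an induced module that covers the generators of $M$: choose a surjection $\cI(V)\onto M$ with $V$ concentrated in degrees $\le t_0(M)$ and finitely generated, giving $0\to K\to \cI(V)\to M\to 0$. Since $\cI(V)$ and $M$ are both $\FI$-homology acyclic and the long exact sequence in $\FI$-homology then forces $\rH^{\FI}_i(K)=0$ for all $i$, in particular $\rH^{\FI}_0(K)=0$, so $K=0$(!) — wait, that would say $M$ is already induced. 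The subtlety is that $K$ need not be presented in finite degrees a priori, so I first invoke part~\ref{part:abeliancategory} (now available) to know $K$ is presented in finite degrees, hence the long exact sequence argument is legitimate and indeed $\rH^{\FI}_0(K)=0$ with $K$ presented in finite degrees forces $K=0$. So in fact an $\FI$-homology acyclic module presented in finite degrees with the right generation bound is induced outright — but one must be careful that a surjection from an induced module onto $M$ exists with $\cI(V)$ also $\FI$-homology acyclic, which needs $V$ free, not just projective; this is fine since $\FB$-modules of finite degree admit surjections from free ones.

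The main obstacle I expect is the bookkeeping in part~\ref{part:abeliancategory}, specifically showing submodules of modules presented in finite degrees are again presented in finite degrees: this is genuinely not formal and is where one must use the shift functor together with the finiteness of $t_1$ (equivalently, the Noetherian property or the regularity bounds), and it is the step that makes the long-exact-sequence arguments in part~(2) non-circular. Once part~\ref{part:abeliancategory} is in hand, part~(2) is a short diagram chase as sketched above.
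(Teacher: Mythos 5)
The paper itself disposes of this theorem by citation: Part~(1) is deduced from the regularity theorem \cite[Theorem~A]{castelnuovo-regularity}, and Part~(2) is exactly \cite[Theorem~B]{ramos}. Your treatment of Part~(1) and of the ``if'' direction of Part~(2) is consistent with that: you correctly identify that the only non-formal input for Part~(1) is finiteness of the higher $t_i$ (equivalently, control of images and submodules), which is precisely what the Church--Ellenberg regularity bound supplies, and citing it there is legitimate. (One small correction: acyclicity of $\cI(V)$ does not require $V$ to be levelwise free --- every induced module is $\FI$-homology acyclic; freeness is only needed for $\cI(V)$ to be \emph{projective}.)

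The genuine gap is in your ``only if'' direction of Part~(2). From $0\to K\to \cI(V)\to M\to 0$ with $M$ acyclic (meaning $\rH^{\FI}_i(M)=0$ for $i\geq 1$; your phrase ``for all $i$'' would force $M=0$), the long exact sequence gives $\rH^{\FI}_i(K)=0$ for $i\geq 1$ but in degree zero it gives only
\[\rH^{\FI}_0(K)\;\iso\;\ker\bigl(V\to \rH^{\FI}_0(M)\bigr),\]
which vanishes only if $V\to\rH^{\FI}_0(M)$ is injective. Arranging that would require taking $V\iso\rH^{\FI}_0(M)$ together with an $S_n$-equivariant splitting of $M_n\onto\rH^{\FI}_0(M)_n$ for every $n$, which does not exist in general; with the usual choice ($V$ levelwise free, surjecting onto $\rH^{\FI}_0(M)$) the kernel $K$ is nonzero. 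Your own parenthetical ``so $K=0$(!)'' is the right alarm bell, but the resolution is not the finite-presentation of $K$: the argument, if it worked, would prove that every acyclic finitely presented $\FI$-module is \emph{induced}, which is false --- $\Ext^1_{\FI}(\cI(V),\cI(W))\iso\Ext^1_{\FB}(V,\cI(W))$ is generally nonzero when the $V_n$ are not projective over $\Z[S_n]$, so there are semi-induced modules that are not induced, and this is exactly why the statement (and Nagpal's shift theorem) says ``semi-induced.'' Repairing this direction requires an actual induction (on generation degree, using the shift/derivative functors and the vanishing of $\rH^{\FI}_1$ to split off induced pieces), i.e.\ the content of Ramos's Theorem~B; it is not a short diagram chase once Part~(1) is known.
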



\begin{proof} The first statement is an immediate corollary of \cite[Theorem~A]{castelnuovo-regularity} (or see  \cite[Theorem~B]{ramos-coh} for more details) and the second statement is  \cite[Theorem~B]{ramos}.
\end{proof}

\begin{proposition}
\label{prop:equalizer}
Let $M$ be an $\FI$-module. \begin{enumerate}
\item Then $t_0(M) \leq d$ if and only if $\Ind_{S_{n-1}}^{S_n} M_{n-1} \m M_n$ is surjective for $n>d$. 	
\item Then $t_1(M) \leq r$ if and only if the kernel of $\Ind_{S_{n-1}}^{S_n} M_{n-1} \m M_n$ is the image of the difference of the two natural maps $\Ind_{S_{n-2}}^{S_n} M_{n-2} \rightrightarrows \Ind_{S_{n-1}}^{S_n} M_{n-1}$.		
\end{enumerate}	
\end{proposition}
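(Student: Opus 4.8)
The plan is to identify $\rH_0^{\FI}(M)$ and $\rH_1^{\FI}(M)$ with the homology of an explicit three-term complex built from induced modules, and then read off both statements. Recall that by definition $\rH_0^{\FI}$ is computed by applying the left adjoint to the forgetful functor $\FBMod\to\FIMod$; concretely, for an induced module $\cI(V)$ we have $\rH_0^{\FI}(\cI(V))=V$ (in degree-wise terms, the ``new'' generators in degree $n$), and $\rH_i^{\FI}(\cI(V))=0$ for $i>0$ since induced modules are $\FI$-homology acyclic (Theorem~\ref{thm:homology-acyclics}\ref{part:abeliancategory}, or rather its part~(2)). So the first step is to write down a partial projective (indeed induced) resolution of $M$ whose $\rH_0^{\FI}$ recovers the complex
\[
\cdots \to \cI(S_{[2]})\otimes M_{2} \to \cI(S_{[1]})\otimes M_{1} \to \cI(S_{[0]})\otimes M_0 \to 0,
\]
or more precisely the bar-type complex $\cdots \to \cI(\Res_{S_2}M_2)\to \cI(\Res_{S_1}M_1)\to\cI(\Res_{S_0}M_0)\to M\to 0$, where $\Res$ denotes restriction along $\FB\into\FI$ (i.e.\ forgetting the non-bijective maps). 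The key point is that evaluating $\rH_0^{\FI}$ on the $n$-th term of this complex yields $\Ind_{S_{n}}^{S_{n}} M_{n}$-type pieces which, upon taking the induced-module structure maps, become exactly the maps $\Ind_{S_{n-1}}^{S_n}M_{n-1}\to M_n$ appearing in the statement.

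Granting this, the $0$-th homology of the complex $\Ind_{S_{[1]}}^{S_{[n]}}M_{n-1}\to M_n\to 0$ is the cokernel of $\Ind_{S_{n-1}}^{S_n}M_{n-1}\to M_n$, and this equals $\rH_0^{\FI}(M)_n$; hence $t_0(M)\le d$, i.e.\ $\rH_0^{\FI}(M)_n=0$ for $n>d$, holds if and only if this map is surjective for all $n>d$, which is statement~(1). For statement~(2), one computes $\rH_1^{\FI}(M)_n$ as the homology at the middle term of the three-term complex, i.e.\ $\ker\bigl(\Ind_{S_{n-1}}^{S_n}M_{n-1}\to M_n\bigr)$ modulo the image of $\Ind_{S_{n-2}}^{S_n}M_{n-2}\to\Ind_{S_{n-1}}^{S_n}M_{n-1}$; and the map $\Ind_{S_{n-2}}^{S_n}M_{n-2}\to\Ind_{S_{n-1}}^{S_n}M_{n-1}$ in the bar complex is, by the standard simplicial-identity bookkeeping, the difference (alternating sum) of the two evident maps. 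So $t_1(M)\le r$ precisely when this middle homology vanishes for $n>r$, i.e.\ precisely when the stated kernel-equals-image condition holds, which is statement~(2). One should also double-check the low-degree edge cases (small $n$) where some of the induced terms vanish, but these are automatic.

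The main obstacle is getting the explicit resolution right and verifying that the differentials are what the statement claims: one must confirm that the canonical augmented simplicial object whose $n$-simplices are $\cI(\Res M_n)$ is a resolution of $M$ (equivalently, that its geometric realization / associated chain complex is $\FI$-homology acyclic in positive degrees and has $\rH_0^{\FI}$ equal to $M$ on the nose), and that after applying $\rH_0^{\FI}$ the face maps collapse to the induction maps and their alternating sum as asserted. An alternative, perhaps cleaner route that avoids writing out the full bar resolution: use that the two-term complex $\Ind_{S_{n-1}}^{S_n}M_{n-1}\to M_n$ in each degree $n$ already detects $\rH_0^{\FI}$ as its cokernel (this is essentially the definition of minimal generators, as recalled in the introduction), and then appeal to a known presentation-degree criterion — e.g.\ deduce~(2) from the fact that $\rH_1^{\FI}$ is the first derived functor, together with the observation that the cone on $\Ind_{S_{n-1}}^{S_n}M_{n-1}\to M_n$ (suitably interpreted over $\FI$) has $\FI$-homology concentrated in degrees $0$ and $1$ given by $\rH_0^{\FI}(M)$ and $\rH_1^{\FI}(M)$. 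Either way, the substance of the proof is purely the identification of derived functors with explicit equivariant (co)kernels; no hard input beyond Theorem~\ref{thm:homology-acyclics} is needed.
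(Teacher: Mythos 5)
Part~(1) is fine: as the paper also notes, it is immediate from the definition of $\rH_0^{\FI}(M)_n$ as the cokernel of $\Ind_{S_{n-1}}^{S_n}M_{n-1}\to M_n$. The gap is in Part~(2), and it sits exactly where you flag ``the main obstacle.'' The complex $\cdots\to\cI(\Res_{S_1}M_1)\to\cI(\Res_{S_0}M_0)\to M\to 0$ you propose is not a resolution of $M$, and its differentials are not even canonically defined: a map of $\FI$-modules $\cI(M_k)\to\cI(M_{k-1})$ (with $M_k$ placed in degree $k$) amounts to an $S_k$-equivariant map $M_k\to\Ind_{S_{k-1}}^{S_k}M_{k-1}$, i.e.\ a wrong-way/transfer map that does not exist naturally. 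The honest comonadic bar resolution $(\cI U)^{\bullet+1}M$ \emph{is} a resolution by acyclic modules, but after applying $\rH_0^{\FI}$ its degree-$n$ part in homological degree $1$ is $\bigoplus_{m\le n}\Ind_{S_{n-m}\times S_m}^{S_n}M_m$, not the single summand $\Ind_{S_{n-1}}^{S_n}M_{n-1}$, so it does not collapse to the three-term complex in the statement without substantial additional work. Thus the identification of $\rH_1^{\FI}(M)_n$ with the middle homology of $\Ind_{S_{n-2}}^{S_n}M_{n-2}\rightrightarrows\Ind_{S_{n-1}}^{S_n}M_{n-1}\to M_n$ --- which is the entire content of Part~(2) --- is asserted but not proved.

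The missing ingredient is the Koszul-type computation of $\FI$-homology (the paper cites \cite[Proposition~5.10]{castelnuovo-regularity}): $\rH_1^{\FI}(M)_n$ is the middle homology of
\[\Ind_{S_{n-2}\times S_2}^{S_n}M_{n-2}\boxtimes\epsilon\ \to\ \Ind_{S_{n-1}}^{S_n}M_{n-1}\ \to\ M_n,\]
where $\epsilon$ is the \emph{sign} representation of $S_2$. Your ``alternating sum of face maps'' heuristic silently skips this sign twist, and the twist is precisely why the statement can be phrased with the difference of the two natural maps: the difference map out of $\Ind_{S_{n-2}}^{S_n}M_{n-2}$ factors through the canonical surjection $\Ind_{S_{n-2}}^{S_n}M_{n-2}\onto\Ind_{S_{n-2}\times S_2}^{S_n}M_{n-2}\boxtimes\epsilon$ (antisymmetrization), so the two differentials have the same image. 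Your ``alternative cleaner route'' at the end has the same problem: the claim that the cone on $\Ind_{S_{n-1}}^{S_n}M_{n-1}\to M_n$ has $\FI$-homology given by $\rH_0^{\FI}(M)$ and $\rH_1^{\FI}(M)$ in degrees $0$ and $1$ is again equivalent to the statement being proved, not an input to it. So the proposal identifies the right target complex but does not supply the derived-functor identification that makes the argument go; some version of the cited Koszul computation (or an explicit acyclic resolution realizing it) is genuinely needed.
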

\begin{proof} The first statement follows from the definition of $\rH_0^{\FI}$ given in the introduction. 		

Let $\epsilon$ denote the sign representation of $S_2$. It follows from \cite[Proposition 5.10]{castelnuovo-regularity} that $\rH_1^{\FI}(M)_n$ is equal to the homology of the chain complex:  \[\Ind_{S_{n-2} \times S_2 }^{S_n} M_{n-2} \boxtimes \epsilon \m \Ind_{S_{n-1}}^{S_n} M_{n-1} \m M_n \] Since the difference of the two natural maps \[\Ind_{S_{n-2}}^{S_n} M_{n-2} \rightrightarrows \Ind_{S_{n-1}}^{S_n} M_{n-1}\] factors through the map \[ \Ind_{S_{n-2} \times S_2}^{S_n} M_{n-2} \boxtimes \epsilon \m \Ind_{S_{n-1}}^{S_n} M_{n-1}\] and \[ \Ind_{S_{n-2}}^{S_n} M_{n-2} \m \Ind_{S_{n-2} \times S_2}^{S_n} M_{n-2} \boxtimes \epsilon\] is surjective, $\rH_1^{\FI}(M)_n$ is also isomorphic to the homology of the chain complex: \[\Ind_{S_{n-2}}^{S_n} M_{n-2} \rightrightarrows \Ind_{S_{n-1}}^{S_n} M_{n-1} \m M_n \] The claim now follows. 
\end{proof}



As with any derived functor, $\FI$-homology extends to any bounded-below complex of $\FI$-modules.
If $M_{\bullet}$ is a bounded-below complex of $\FI$-modules, we write $\HH_i(M_{\bullet})$ for the ``$\FI$-hyper-homology'' computed by replacing $M_{\bullet}$ by a quasi-isomorphic complex of projective (or just semi-induced; Theorem~\ref{thm:homology-acyclics}~(2)) $\FI$-modules, applying $\rH^{\FI}$ term-wise, then taking $\ker$/$\im$ in homological degree $i$. Similarly, we write $\bt_i(M_{\bullet})$ for $\deg \HH_i(M_{\bullet})$. 

A reason we write $\HH_i(M_{\bullet})$ and $\bt_i(M_{\bullet})$, rather than just $\rH_i^{\FI}(M_{\bullet})$ and $t_i(M_{\bullet})$, is to emphasize that $M_{\bullet}$ is a complex of $\FI$-modules. Another reason for using this notation is that $\rH_i^{\FI}(M_{\bullet})$ could plausibly mean the complex of $\FI$-modules obtained by applying the functor $\rH_i^{\FI}$ to each $\FI$-module $M_k$ individually (ignoring the differential on $M_\bullet$) and then reassembling these groups back into a chain complex; we will never use this notation or this notion.



For any complex of $\FI$-modules $M_{\bullet}$, we will denote the homology of the complex by $\rH_i(M_{\bullet})\coloneq \ker(M_i\to M_{i-1})/\im(M_{i+1}\to M_i)$. To avoid confusion with $\FI$-homology, throughout this paper we observe the convention that the notation $\rH_i(M_{\bullet})$ \emph{without} superscript $\FI$ always refers to $\ker$/$\im$; any functor obtained from $\rH^{\FI}_0$ will always have the superscript $\FI$.

	

\subsection{Shifts, derivatives, and FI-homology}
\label{sec:shift-homology}
For a finite set $S$, we have a natural transformation $\tau_{S}$ given by the composite \[ \FI \to \FI \times \FI \to \FI\] where the first transformation takes $T \mapsto (S,T)$ and the second transformation is given by the disjoint union. Pulling back along $\tau_{S}$   defines a natural transformation on $\Mod_{\FI}$ which depends (up to an isomorphism) only on the size of $S$. We define the shift functor $\Sigma \colon \Mod_{\FI} \to \Mod_{\FI}$ to be this functor when $S=\{\star\}$, and we define $\Sigma^n $ to be the $n$-fold iterate of $\Sigma$.  For any $\FI$-module $M$, we have $\deg (\Sigma M) =\deg M - 1$ (unless $M = 0$, in which case $\deg (\Sigma M) = \deg  M = -1$).

The natural transformation $\id \to \tau_S$ induces a natural transformation $\id \to \Sigma$ whose cokernel will be denoted by $\Delta$ (note that $\Sigma$ and $\Delta$ are denoted by $S$ and $D$ respectively in \cite{fimodules,fi-noeth,castelnuovo-regularity}). 
Induced modules (and hence semi-induced modules) are acyclic with respect to $\Delta$ \cite[Corollary~4.5]{castelnuovo-regularity}. Moreover, if $V$ is an $\FB$-module then the short exact sequence \[ 0 \to \cI(V)  \to \Sigma \cI(V) \to \Delta \cI(V) \to 0 \] splits, and we have $\Delta\cI(V)=\cI(\Sigma V)$ \cite[Lemma~4.4]{castelnuovo-regularity}. It follows that $\Delta$ takes semi-induced modules to semi-induced modules.

It is well known that $t_0(\Sigma M)\leq t_0(M)$ (see \cite[Corollary~2.13]{fi-noeth}). A key ingredient in the proof of Theorem \ref{athm:growth-bounds} is the following derived version of this statement. 


\begin{theorem}
	\label{thm:shift-of-homology}
	Let $M_{\bullet}$ be a bounded-below graded complex of $\FI$-modules. Then we have $\bt_i(\Sigma M_{\bullet})\leq \bt_i(M_{\bullet})$ for all $i$.
\end{theorem}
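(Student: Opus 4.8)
The plan is to use exactness of the shift functor to reduce the statement to a comparison of $\FI$-homologies of a projective resolution, and then to trade the shift $\Sigma$ on $\FI$-modules for the (transparent) shift on $\FB$-modules.

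First I would choose a bounded-below complex of projective $\FI$-modules $P_\bullet$ together with a quasi-isomorphism $P_\bullet \xrightarrow{\sim} M_\bullet$. The functor $\Sigma$ is exact, being computed objectwise by $(\Sigma N)_T = N_{T \sqcup \{\star\}}$, so $\Sigma P_\bullet \xrightarrow{\sim} \Sigma M_\bullet$ is again a quasi-isomorphism, and each $\Sigma P_n$ is semi-induced, hence $\FI$-homology acyclic (Theorem~\ref{thm:homology-acyclics}). Therefore $\HH_i(M_\bullet) = \rH_i(\rH_0^{\FI}(P_\bullet))$ and $\HH_i(\Sigma M_\bullet) = \rH_i(\rH_0^{\FI}(\Sigma P_\bullet))$, and it suffices to prove $\deg \rH_i(\rH_0^{\FI}(\Sigma P_\bullet)) \le \deg \rH_i(\rH_0^{\FI}(P_\bullet))$ for every $i$.

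Next I would feed in the natural transformation $\id \to \Sigma$. On an induced module $\cI(V)$ this map is a split injection with cokernel $\Delta\cI(V)$, and by induction along the defining filtration (exactness of $\Sigma$ plus a short diagram chase) the same holds on every semi-induced module. Applying this termwise to $P_\bullet$ produces a short exact sequence of complexes of semi-induced $\FI$-modules
\[ 0 \longrightarrow P_\bullet \longrightarrow \Sigma P_\bullet \longrightarrow \Delta P_\bullet \longrightarrow 0. \]
Since each $\Delta P_n$ is semi-induced, hence $\rH_1^{\FI}$-acyclic, applying $\rH_0^{\FI}$ remains exact and yields a short exact sequence of complexes of $\FB$-modules; its long exact sequence in homology gives, for each $i$,
\[ \deg \rH_i\bigl(\rH_0^{\FI}(\Sigma P_\bullet)\bigr) \ \le\ \max\Bigl( \deg \rH_i\bigl(\rH_0^{\FI}(P_\bullet)\bigr),\ \deg \rH_i\bigl(\rH_0^{\FI}(\Delta P_\bullet)\bigr) \Bigr). \]

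The essential point—and the step I expect to require the most care—is the natural isomorphism $\rH_0^{\FI} \circ \Delta \cong \Sigma \circ \rH_0^{\FI}$ of functors $\Mod_{\FI} \to \Mod_{\FB}$, where on the right $\Sigma$ denotes the shift of $\FB$-modules $(\Sigma W)_n = \Res^{S_{n+1}}_{S_n} W_{n+1}$. One checks this directly from $\rH_0^{\FI}(N)_n = N_n / \sum_{f\colon [n-1] \into [n]} \im N(f)$: unwinding definitions, $\rH_0^{\FI}(\Delta N)_n$ is $N_{n+1}$ modulo the images of $N(\iota)$ for the standard inclusion $\iota\colon [n]\into[n+1]$ together with the images of $N(\hat g)$ for $g\colon [n-1]\into[n]$, where $\hat g$ extends $g$ by $n \mapsto n+1$; one then verifies that these injections, together with the $S_n$-action on $N_{n+1}$, span exactly $\sum_{f\colon [n]\into[n+1]} \im N(f)$, so that $\rH_0^{\FI}(\Delta N)_n = \rH_0^{\FI}(N)_{n+1} = (\Sigma\rH_0^{\FI}(N))_n$ naturally in $N$. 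Granting this, $\rH_0^{\FI}(\Delta P_\bullet) = \Sigma\rH_0^{\FI}(P_\bullet)$; since the $\FB$-shift is exact and lowers degree by one, $\deg\rH_i(\rH_0^{\FI}(\Delta P_\bullet)) = \deg \Sigma\HH_i(M_\bullet) \le \bt_i(M_\bullet)$. Combining this with $\deg\rH_i(\rH_0^{\FI}(P_\bullet)) = \bt_i(M_\bullet)$ in the displayed inequality gives $\bt_i(\Sigma M_\bullet) \le \bt_i(M_\bullet)$, as claimed. (A sanity check on the whole strategy: in homological degree $i=0$ this recovers the known inequality $t_0(\Sigma M) \le t_0(M)$.)
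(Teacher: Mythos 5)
Your proposal is correct and follows essentially the same route as the paper: resolve by projectives, use the (split) short exact sequence $0 \to P_\bullet \to \Sigma P_\bullet \to \Delta P_\bullet \to 0$, and identify the third term of the resulting long exact sequence with $\Sigma \HH_i(M_\bullet)$ via $\rH_0^{\FI}\circ\Delta \cong \Sigma\circ\rH_0^{\FI}$. The only difference is that the paper cites Church~\cite{church-bounding} for that isomorphism (at the derived level, using that $\Delta$ preserves projectives), whereas you verify the underived statement directly and apply it termwise to the acyclic resolution, which is equivalent here.
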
 
\begin{proof}
	
	The key to this theorem is a lemma, due to Church~\cite{church-bounding}, which leads to a natural long exact sequence (this is a derived version of the long exact sequence in \cite[Theorem~1]{gan-sequence})
	\begin{equation}
		\label{eq:FIhomologyLES}
		\ldots \to  \HH_i(M_{\bullet}) \to \HH_i(\Sigma M_{\bullet}) \to \Sigma \HH_i(M_{\bullet}) \to \ldots.
	\end{equation} We explain the construction of this long exact sequence below, but first we note that the assertion of the theorem follows immediately from it: \[ \bt_i(\Sigma M_{\bullet}) = \deg \HH_i(\Sigma M_{\bullet})  \leq \max(\deg \HH_i(M_{\bullet}), \deg \Sigma \HH_i(M_{\bullet}) ) =  \bt_i(M_{\bullet}) . \]
	
	
	To construct the long exact sequence above, start by replacing $M_{\bullet}$ with a quasi-isomorphic complex $P_{\bullet}$ of projective $\FI$-modules. We then get a split short exact sequence
	\[0 \to P_{\bullet} \to \Sigma P_{\bullet} \to \Delta P_{\bullet} \to 0\]
	which induces a long exact sequence in $\FI$-homology
	\[ \ldots \to  \HH_i(P_{\bullet}) \to \HH_i(\Sigma P_{\bullet}) \to \Sigma \HH_i(P_{\bullet}) \to \ldots. \]
	By definition, the first term is $\HH_i(P_\bullet)=\HH_i(M_{\bullet})$, and since $\Sigma$ is exact, the second term is $\HH_i(\Sigma P_{\bullet})=\HH_i(\Sigma M_{\bullet})$.
	Since $\Delta$ takes projectives to projectives \cite[Lemma~4.7(iv)]{castelnuovo-regularity}, the third term is \[\HH_i(\Delta P_\bullet)=\rL_i(\rH^{\FI}\Delta)(M_{\bullet})\iso\rL_i(\Sigma\rH^{\FI})(M_{\bullet})=\Sigma\rL_i(\rH^{\FI})(M_{\bullet})=\Sigma\HH_i(M_{\bullet}).\] In the second equality, we used the isomorphism $\Sigma \rH^{\FI}\iso \rH^{\FI}\Delta$ of \cite{church-bounding}, and in the third we used that $\Sigma$ is exact. Therefore this is the desired long exact sequence.
\end{proof}

\subsection{The shift theorem and stable degree}
The following  theorem is a slight generalization of Nagpal's structure theorem for finitely generated $\FI$-modules to the case of $\FI$-modules with finite presentation degree.

\begin{theorem}[{\cite[Proposition~6.4 and Theorem~A.9]{djament} or \cite[Theorem~C]{ramos}}]
\label{thm:shift-theorem}
Let $M$ be an $\FI$-module presented in finite degree. Then for large enough $n$, $\Sigma^n M$ is semi-induced.
\end{theorem}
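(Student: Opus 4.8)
The plan is to reduce the statement to the corresponding result for finitely generated $\FI$-modules, namely Nagpal's structure theorem (which is one of the cited references), using the fact that being \emph{presented in finite degree} is only mildly weaker than being finitely generated. Concretely, suppose $M$ is presented in finite degree, so $t_0(M)=d_0<\infty$ and $t_1(M)=d_1<\infty$. The first step is to choose a projective cover in low degrees: since $\rH^{\FI}_0(M)$ is concentrated in degrees $\le d_0$, one can choose a (possibly infinitely generated) projective $\FI$-module $P_0$, with $P_0$ generated in degrees $\le d_0$, together with a surjection $P_0\onto M$. Let $K=\ker(P_0\onto M)$. Using the long exact sequence in $\FI$-homology and the hypothesis $t_1(M)<\infty$, one gets that $t_0(K)\le\max(d_0,d_1)<\infty$, so $K$ is also generated in finite degree. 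Thus $M$ has a presentation $P_1\to P_0\to M\to 0$ with both $P_i$ projective and generated in finite degrees (though not necessarily finitely generated, since the $\FB$-modules $V_i$ need not be finite-dimensional in each degree).

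The second step is to observe that the shift functor $\Sigma$ and the semi-induced property are compatible with arbitrary direct sums and with this ``generated in finite degree'' presentation. Each projective $P_i=\cI(V_i)$ with $V_i$ supported in degrees $\le \max(d_0,d_1)$ is semi-induced (indeed induced up to the grading filtration), hence already $\Delta$-acyclic and unchanged in the relevant sense under shifting. So the content is entirely about the cokernel $M$ of a map between two such projectives. Here I would invoke Theorem~\ref{thm:homology-acyclics}: to show $\Sigma^n M$ is semi-induced for $n\gg 0$, it suffices (since $\Sigma^n M$ is still presented in finite degree, as $t_i(\Sigma M)\le t_i(M)$) to show $\Sigma^n M$ is $\FI$-homology acyclic, i.e. that $\rH^{\FI}_i(\Sigma^n M)=0$ for all $i\ge 1$ once $n$ is large. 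Using the isomorphism $\Sigma\rH^{\FI}\iso\rH^{\FI}\Delta$ and $\deg(\Sigma N)=\deg N - 1$, iterating shows $\deg\rH^{\FI}_i(\Sigma^n M)=\deg\rH^{\FI}_i(M)-n$ for the naive bound — wait, more carefully, $\rH^{\FI}_i(\Sigma M)$ is a subquotient built from $\rH^{\FI}_i(M)$ and $\Sigma\rH^{\FI}_i(M)$ via the long exact sequence~\eqref{eq:FIhomologyLES}, so $t_i(\Sigma^n M)\le t_i(M)$ but in fact one expects $t_i(\Sigma^n M)$ to eventually decrease; the point is to show it hits $-1$.

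The cleanest route for this third step is to not reprove it at all but to cite the referenced structure theorem directly: Theorem~\ref{thm:shift-theorem} is \emph{stated} as a black box with references to \cite[Proposition~6.4, Theorem~A.9]{djament} and \cite[Theorem~C]{ramos}, so the ``proof'' is just the citation, together with the remark that Ramos's Theorem~C and Djament's results are proven in exactly this generality (finite presentation degree, not finite generation). If instead one wanted a self-contained argument, one would run the induction on homological degree $i$: for the top nonzero $\rH^{\FI}_i$, the long exact sequence~\eqref{eq:FIhomologyLES} combined with $\deg(\Sigma(-)) = \deg(-)-1$ forces $t_i(\Sigma M) < t_i(M)$ strictly (this is the key lemma, essentially the ``Noetherian-type'' input), so after finitely many shifts $\rH^{\FI}_i$ vanishes, and one descends. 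The main obstacle, and the reason this is cited rather than reproved, is precisely establishing that strict decrease $t_i(\Sigma M)<t_i(M)$ — equivalently that $\rH^{\FI}_i(M)\to\Sigma\rH^{\FI}_i(M)$ is surjective in top degree, which is the heart of Gan--Li/Church's sequence arguments and of Nagpal's original proof. Everything else (closure under the relevant operations, reduction from finite generation to finite presentation degree, applying Theorem~\ref{thm:homology-acyclics} at the end) is formal.
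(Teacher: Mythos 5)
The paper gives no proof of this theorem at all: it is quoted as a black box from Djament and Ramos, exactly as you ultimately conclude, and the cited results are indeed proved in this generality (finite presentation degree, not finite generation). Your supplementary sketch correctly isolates where the real work lies — the eventual vanishing of the higher $\FI$-homology of $\Sigma^n M$, i.e.\ the strict decrease of $t_i$ under shifting, which the long exact sequence alone only bounds by $t_i(\Sigma M)\le t_i(M)$ — and correctly defers that step to the cited references, so your account matches the paper's treatment.
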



\begin{definition} Let $M$ be an $\FI$-module. We say that an element $x \in M(S)$ is \emph{torsion} if there exists an injection $f \colon S \to T$ such that $f_{\ast}(x) =0$. An FI-module is \emph{torsion} if it consists entirely of torsion elements.
\end{definition} 

\begin{definition}
We define the \emph{stable degree} of an $\FI$-module $M$, denoted $\delta(M)$, to be the least number $n \ge -1$ such that $\Delta^{n+1} M$ is torsion.
\end{definition} The notion of  stable degree was introduced in \cite{djament-vespa} where it was called weak degree.  We summarize below some properties of stable degree for $\FI$-modules presented in finite degrees. Before this project started, Steven Sam, Andrew Snowden and the third author 
worked out a proof of these properties in a private communication. We are grateful to Steven Sam and Andrew Snowden for allowing us to include these here. 
\begin{proposition}
\label{prop:delta} 
Let $K, L, M$ and $N$ be $\FI$-modules presented in finite degrees.
\begin{enumerate}[label={(\arabic*)}]
\item \label{part:deltat0semi} If $M$ is semi-induced, then $\delta(M) = t_0(M)$.
\item \label{part:deltaSigma} $\delta(M) = \delta(\Sigma^n M)$ for any $n \ge 0$.
\item \label{part:deltadegSigma} $\delta(M)$ is the common value of $t_0(\Sigma^n M)$ for $n \gg 0 $. 
\item \label{part:deltat0gen} $\delta(M) \le t_0(M)<\infty$.
\item \label{part:LMN} If $ 0 \to L \to M \to N \to 0$ is a short exact sequence, $\delta(M) = \max (\delta(L), \delta(N)).$
\item \label{part:subquotient} If $K$  is a subquotient of $M$,  $\delta(K) \le \delta(M).$
\item \label{part:iterDelta} The cokernel $\iterDelta_a M$ of the natural map $M \to \Sigma^a M$  for $a>0$ satisfies $\delta(\iterDelta_a(M)) = \max({\delta(M) - 1}, -1)$.
\end{enumerate}
\end{proposition}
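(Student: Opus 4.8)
The plan is to prove the identity by induction on $a\geq 1$, with $a=1$ as the base case and the general case reduced to it via an easy short exact sequence.

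\emph{Base case $a=1$.} Here the natural map $M\to\Sigma^1 M$ is just $M\to\Sigma M$, so $\iterDelta_1 M=\Delta M$ and the claim reads $\delta(\Delta M)=\max(\delta(M)-1,-1)$. Write $d:=\delta(M)$, finite by part~\ref{part:deltat0gen}. By definition $\delta(\Delta M)$ is the least $n\geq -1$ with $\Delta^{n+1}(\Delta M)=\Delta^{n+2}M$ torsion, so it is enough to show that the set $\{n\geq -1:\Delta^{n+1}M\text{ is torsion}\}$ is upward closed; since that set has minimum $d$ by definition of $\delta$, it then equals $\{n:n\geq d\}$, so $\Delta^{n+2}M$ is torsion iff $n+1\geq d$, i.e.\ $n\geq d-1$, which gives $\delta(\Delta M)=\max(d-1,-1)$. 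Upward closedness amounts to the lemma that $\Delta$ carries torsion $\FI$-modules to torsion $\FI$-modules, and since $\Delta N$ is a quotient of $\Sigma N$ it suffices to check this for $\Sigma$. So let $N$ be torsion and $x\in(\Sigma N)_T=N_{\{\star\}\sqcup T}$; pick an injection $\iota\colon\{\star\}\sqcup T\to U$ with $\iota_{\ast}x=0$. The bijection $\beta\colon U\to\{\star\}\sqcup(U\setminus\{\iota(\star)\})$ that carries $\iota(\star)$ to $\star$ and fixes everything else satisfies $\beta\circ\iota=\id_{\{\star\}}\sqcup j$, where $j\colon T\to U\setminus\{\iota(\star)\}$ is $\iota|_T$; since $\id_{\{\star\}}\sqcup j$ is exactly the action of $j$ on $\Sigma N$, we get $j_{\ast}x=\beta_{\ast}\iota_{\ast}x=0$, so $x$ is torsion in $\Sigma N$.

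\emph{Inductive step.} Fix $a\geq 2$ and assume the statement for $a-1$, applied to arbitrary $\FI$-modules presented in finite degrees. Since $\Sigma$ is exact and preserves presentation in finite degrees (apply the exact functor $\Sigma$ to a presentation of $M$ by finitely generated projectives, using $\Sigma\cI(V)\cong\cI(V)\oplus\cI(\Sigma V)$), every module named below is presented in finite degrees by Theorem~\ref{thm:homology-acyclics}\ref{part:abeliancategory}. The natural map factors as $M\xrightarrow{f}\Sigma M\xrightarrow{g}\Sigma^{a-1}(\Sigma M)=\Sigma^a M$, with $f$ and $g$ the respective natural maps (the transformation $\id\to\Sigma^a$ being a composite of single-point shifts). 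The standard kernel--cokernel exact sequence of $g\circ f$ contains the exact piece $\coker f\to\coker(g\circ f)\to\coker g\to 0$, i.e.
\[\Delta M\longrightarrow\iterDelta_a M\longrightarrow\iterDelta_{a-1}(\Sigma M)\longrightarrow 0,\]
so, writing $J$ for the image of $\Delta M$, we obtain a short exact sequence $0\to J\to\iterDelta_a M\to\iterDelta_{a-1}(\Sigma M)\to 0$ with $J$ a quotient of $\Delta M$. Now part~\ref{part:LMN} gives $\delta(\iterDelta_a M)=\max(\delta(J),\delta(\iterDelta_{a-1}(\Sigma M)))$. By part~\ref{part:subquotient} and the base case, $\delta(J)\leq\delta(\Delta M)=\max(\delta(M)-1,-1)$; by the inductive hypothesis together with part~\ref{part:deltaSigma}, $\delta(\iterDelta_{a-1}(\Sigma M))=\max(\delta(\Sigma M)-1,-1)=\max(\delta(M)-1,-1)$. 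Hence $\delta(\iterDelta_a M)=\max(\delta(M)-1,-1)$, completing the induction.

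\emph{Main obstacle.} The only genuinely delicate step is the torsion lemma in the base case: the naive attempt fails because an injection witnessing that $x$ is torsion in $N$ need not fix the adjoined point $\star$, and one must first rearrange it (via the bijection $\beta$ above) into one that does. Everything else is formal bookkeeping, combining the kernel--cokernel exact sequence with parts~\ref{part:deltaSigma}, \ref{part:LMN} and \ref{part:subquotient}, which are available to us.
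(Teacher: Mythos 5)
Your proposal proves only part~\ref{part:iterDelta}; parts \ref{part:deltat0semi}--\ref{part:subquotient} are never established, only invoked as ``available''. The paper does prove each of them (e.g.\ \ref{part:deltat0semi} via $\Delta\cI(V)=\cI(\Sigma V)$ and $\Delta$-acyclicity of induced modules, \ref{part:LMN} via the shift theorem and exactness of $\rH_0^{\FI}$ on short exact sequences of semi-induced modules), so as a proof of the full proposition this is a substantial omission. There is at least no circularity in what you do use: the paper's proofs of \ref{part:deltaSigma}, \ref{part:LMN} and \ref{part:subquotient} do not depend on \ref{part:iterDelta}.

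For part~\ref{part:iterDelta} itself your argument is correct and takes a genuinely different, more self-contained route. The paper reduces to $a=1$ by citing \cite[Proposition~1.4~(7)]{djament-vespa} and then settles $a=1$ with the phrase ``by definition of $\delta$''. You replace the citation by an induction on $a$ using the kernel--cokernel exact sequence of the composite $M\to\Sigma M\to\Sigma^{a-1}(\Sigma M)$ together with parts \ref{part:deltaSigma}, \ref{part:LMN}, \ref{part:subquotient}, and you make explicit the one non-formal point hidden in ``by definition'': that $\Delta$ carries torsion modules to torsion modules, so that $\{n:\Delta^{n+1}M\text{ torsion}\}$ is upward closed. (Strictly, that lemma is only needed in the corner case $\delta(M)=-1$; for $\delta(M)=d\geq 0$ the minimality in the definition of $\delta$ already forces $\delta(\Delta M)=d-1$.) Your proof of the lemma --- post-composing a killing injection with a bijection so that it fixes the adjoined point $\star$, hence realizes an $\FI$-morphism acting on $\Sigma N$ --- is correct, as is the bookkeeping that all intermediate modules are presented in finite degrees so that parts \ref{part:LMN} and \ref{part:subquotient} apply. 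What your approach buys is independence from the Djament--Vespa reference; what the paper's buys is brevity.
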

\begin{proof} 
 Part~\ref{part:deltat0semi}: First suppose $M=\cI(V)$ is induced. From the equality $\Delta\cI(V)=\cI(\Sigma V)$, we have $\Delta^k\cI(V)=\cI(\Sigma^k V)$, and the smallest $n$ such that $\Sigma^{n+1}V=0$ is $\deg V=t_0(M)$. This shows that $\delta(\cI(V)) = t_0(\cI(V))$. Since induced modules are acyclic with respect to both $\rH^{\FI}_0$ (Theorem~\ref{thm:homology-acyclics}) and $\Delta$ (\cite[Corollary~4.5]{castelnuovo-regularity}), we conclude that the result holds for semi-induced modules as well.  Part~\ref{part:deltaSigma} follows from the fact that $\Delta$ commutes with $\Sigma^n$ \cite[Proposition~1.4]{djament-vespa}, and the fact that $T$ is torsion if and only if $\Sigma^n T$ is torsion. Part~\ref{part:deltadegSigma}: Once $n$ is large enough that $\Sigma^n M$ is semi-induced (Theorem~\ref{thm:shift-theorem}), this follows immediately from Part~\ref{part:deltat0semi} and Part~\ref{part:deltaSigma}.
Part~\ref{part:deltat0gen} follows from Part~\ref{part:deltadegSigma} in light of the fact that $t_0(\Sigma^n M) \le t_0(M)$ (e.g.\ \cite[Corollary~2.1]{fi-noeth}). 

Part~\ref{part:LMN}: Choose $n$ large enough that  $\Sigma^n L$, $\Sigma^n M$, and $\Sigma^n N$ are semi-induced. Since semi-induced modules are homology-acyclic, we have a short exact sequence \[0\to \rH^{\FI}_0(\Sigma^n L)\to \rH^{\FI}_0(\Sigma^n M)\to \rH^{\FI}_0(\Sigma^n N)\to 0.\] Thus, $t_0(\Sigma^n M)=\max(t_0(\Sigma^n L),t_0(\Sigma^n L))$, which implies the claim in light of Part~\ref{part:deltadegSigma}.  Part~\ref{part:subquotient} is a consequence of Part~\ref{part:LMN}.  Part~\ref{part:iterDelta}: By \cite[Proposition~1.4 (7)]{djament-vespa}, it suffices to prove the result when $a = 1$. In this case $Q_a(M) $ is just $\Delta M$, and hence by definition of $\delta$ we have $\delta(\iterDelta_a(M)) = \max({\delta(M) - 1}, -1)$. This completes the proof.
\end{proof}

\subsection{Local cohomology and local degree}
Let $\Gamma_{\fm}(M)$ denote the maximum torsion submodule contained in $M$.  The functor $\Gamma_{\fm}$ is left-exact, and so we can consider its right-derived functor $\rR \Gamma_{\fm}$. We also write $\rH^i_{\fm}$ for $\rR^i \Gamma_{\fm}$, and call these functors \emph{local cohomology} (this terminology is chosen because of its similarity to the classical notion of local cohomology from commutative algebra). We write $h^i(M)$ for $\deg \rH^i_{\fm}(M)$.  


The following result is a strengthening of \cite[Theorem~E]{li-ramos}.



\begin{theorem} \label{thm:local-cohomology}
Let $M$ be an $\FI$-module presented in finite degrees. Then there exists a complex \[ 0 \to I^0 \to I^1 \to \ldots \to I^N \to 0\] exact in all high enough degrees such that $I^0 = M$ and $I^i$ is semi-induced for $i > 0$.
Moreover, for any such complex the following holds.
\begin{enumerate}
\item $\rH^i(I^{\bullet}) = \rH^i_{\fm}(M)$.
\item $h^0(M) \le \min(t_0(M), t_1(M))+t_1(M)-1$.
\item $h^1(M) \le \delta(M) + t_0(M)-1$.
\item $h^i(M) \le 2 \delta(M)-2(i-1)$ for $i \ge 2$. In particular, $\rH^i_{\fm}(M) = 0$ if $i > \delta(M) + 1$.
\end{enumerate}
\end{theorem}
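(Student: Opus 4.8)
The plan is to construct the complex $I^\bullet$ by a "semi-induced resolution from above" built out of the shift theorem, and then to read off the four numerical bounds by comparing $I^\bullet$ with the two-step truncation given by the map $M\to\Sigma^a M$ for suitable $a$. First I would establish existence: by Theorem~\ref{thm:shift-theorem} there is an $a$ with $\Sigma^a M$ semi-induced; since $\Delta$ and $\Sigma$ preserve the class of modules presented in finite degrees (Theorem~\ref{thm:homology-acyclics}~\ref{part:abeliancategory}) and take semi-induced to semi-induced, the natural map $M\to\Sigma^a M$ has kernel and cokernel that are torsion (this is exactly the content of $\delta$ being finite together with $\Sigma^a M$ being semi-induced), so the cokernel $\iterDelta_a M$ is torsion in finite degrees. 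Iterating — resolving $M$ one step by the semi-induced module $\Sigma^a M$, then repeating on the torsion-modded-out pieces, or more cleanly splicing together the standard "$M\hookrightarrow\Sigma^a M$" maps — produces a finite complex $0\to I^0\to\cdots\to I^N\to 0$ with $I^0=M$, each $I^i$ ($i>0$) semi-induced, and the whole complex exact in all sufficiently large degrees (finiteness of $N$ comes from the fact that each application of $\Delta$ drops $\delta$ by one, by Proposition~\ref{prop:delta}~\ref{part:iterDelta}, so after at most $\delta(M)+1$ steps the relevant module is torsion and the resolution terminates).

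For claim~(1), I would argue that $\Gamma_\fm$ kills semi-induced modules and is exact on the torsion pieces involved, so that $\Gamma_\fm(I^\bullet)$ computes $\rH^i_\fm(M)$; more precisely, exactness of $I^\bullet$ in high degrees means $\rH^i(I^\bullet)$ is torsion, and a standard hyperhomology / spectral sequence comparison (the complex $I^{\geq 1}$ being a complex of $\Gamma_\fm$-acyclics) identifies $\rH^i(I^\bullet)$ with $\rR^i\Gamma_\fm(M)=\rH^i_\fm(M)$. This also shows the numerical bounds are independent of the chosen complex, since they are now bounds on $\deg\rH^i(I^\bullet)$ for the specific complex I build but equal $\deg\rH^i_\fm(M)=h^i(M)$ intrinsically.

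For the numerical bounds~(2)--(4), the key input is the long exact sequence relating $\rH^i_\fm(M)$, the torsion submodule $\Gamma_\fm(M)$, and the (semi-induced, hence local-cohomology-free in positive degrees) module $\Sigma^a M$, combined with the bounds already available: $\deg\Gamma_\fm(M)=h^0(M)$ controlled via $t_0,t_1$ using the presentation (this is where the $\min(t_0(M),t_1(M))+t_1(M)-1$ shape comes from — one bounds the degree of the torsion submodule of a module with a given presentation, which should reduce to a statement about the kernel of a map of induced modules and its presentation, essentially \cite[Theorem~E]{li-ramos} or a sharpening thereof); then $\rH^1_\fm(M)\cong\coker(\Gamma_\fm\Sigma^a M$-type term$)$ gets $\delta(M)+t_0(M)-1$ from $\deg\iterDelta_a M\le\max(t_0(M),\delta(\iterDelta_aM)+\dots)$ refined using $t_0(\Sigma M)\le t_0(M)$ and $\delta(\Sigma M)=\delta(M)$; and for $i\geq2$ one has $\rH^i_\fm(M)\cong\rH^{i-1}_\fm(\iterDelta_a M)$ (a dimension-shift, since $\Sigma^a M$ is acyclic) and induction on $i$ using $\delta(\iterDelta_a M)=\delta(M)-1$ gives the bound $2\delta(M)-2(i-1)$, with the vanishing for $i>\delta(M)+1$ following because eventually $\delta$ hits $-1$ and the module becomes (eventually) zero, forcing its local cohomology to vanish.

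I expect the main obstacle to be claim~(2): pinning down the exact constant $\min(t_0(M),t_1(M))+t_1(M)-1$ for $h^0(M)=\deg\Gamma_\fm(M)$ is the most delicate part, because it is not a formal consequence of the shift machinery but requires a genuine estimate on the degree of the torsion submodule from a presentation — one must choose a surjection from an induced module realizing $t_0$, analyze the kernel (whose generation degree is bounded by $t_1$), and track how torsion can be "hidden" in that kernel; getting the asymmetric bound involving both $t_0$ and $t_1$ and the $\min$ will require care, and I would likely isolate it as a separate lemma proved by the Nagpal/Sam/Snowden-style argument on $\Delta$ and torsion rather than trying to extract it from \cite{li-ramos} directly. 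The bounds~(3) and~(4), by contrast, I expect to follow fairly mechanically once the dimension-shifting isomorphism $\rH^{i+1}_\fm(M)\cong\rH^i_\fm(\iterDelta_a M)$ and Proposition~\ref{prop:delta}~\ref{part:iterDelta} are in hand.
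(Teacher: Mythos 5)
Your proposal is essentially the paper's argument: build $I^\bullet$ by splicing the maps $M\to\Sigma^a M$ (so that $\delta$ drops by one at each stage, via Proposition~\ref{prop:delta}\ref{part:iterDelta}), identify $\rH^i(I^\bullet)$ with $\rH^i_{\fm}(M)$ using $\Gamma_{\fm}$-acyclicity of the semi-induced terms, and obtain (2)--(4) from the bound $\deg\Gamma_{\fm}(\coker(A\to B))\leq t_0(A)+t_0(B)-1$ applied to consecutive terms --- your dimension-shifting induction for (3)--(4) is just this same complex read one step at a time. Two small remarks: the estimate you correctly flag as the non-formal input for (2) (and which also drives (3)--(4)) is exactly \cite[Corollary~F]{castelnuovo-regularity}, which the paper simply cites, so your honesty about not rederiving it matches the paper's own treatment; and your parenthetical claim that the cokernel $\iterDelta_a M$ is torsion is false as stated (e.g.\ $\iterDelta_1\cI(V)=\cI(\Sigma V)$ is typically not torsion) --- only $\delta$ drops, and the module becomes torsion only after $\delta(M)+1$ iterations, as you correctly say when explaining why $N$ is finite.
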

\begin{proof}
Note that the assumption that the complex $I^\bullet$ is exact in high enough degree is equivalent to saying that $\rH^i(I^\bullet)$ is torsion for all $i$.
The existence of such a complex is proven in \cite[Theorem~A]{nagpal}. Part~(1) is proven in \cite[Theorem~E]{li-ramos} (also see \cite[Theorem~4.10]{ramos-coh}). Part~(2) is \cite[Corollary~F]{castelnuovo-regularity}.\footnote{The proof of \cite[Corollary~F]{castelnuovo-regularity} has been greatly simplified by Church~\cite{church-bounding}, based on a inductive argument given by Li~\cite[Theorem~1.3]{li}.} For the remaining parts, note that for all $i>0$, we have \[\rH^i_{\fm}(M)= \ker(\coker(I^{i-1}\to I^i) \to I^{i+1}) = \Gamma_{\fm}(\coker(I^{i-1}\to I^i))  \] where the last equality follows from the fact that $I^{i+1}$ is torsion-free and that $\rH^i_{\fm}(M)$ is torsion. The statement of \cite[Corollary~F]{castelnuovo-regularity} is that the degree of torsion in such a cokernel can be bounded:
\[h^i(M)=\deg \Gamma_{\fm}(\coker(I^{i-1}\to I^i))\leq t_0(I^{i-1})+t_0(I^i)-1.\] Note that for $i>0$, we have $t_0(I^i)=\delta(I^i)$. Therefore to obtain the claimed bounds on $h^i(M)$, it suffices to show that the complex can be chosen so that $\delta(I^i) \le \delta(M) -i + 1$ for $i > 0$. We do this by induction of $\delta(M)$ as follows.

In the base case $\delta(M) = -1$, we can choose $I^i = 0$ for $i>0$. Now assume $\delta(M) > 0$, and choose an $n$ large enough that $\Sigma^n M$ is semi-induced (Theorem~\ref{thm:shift-theorem}). Let $M'$ be the cokernel of $M \to \Sigma^n M$. By Proposition~\ref{prop:delta}\ref{part:iterDelta}, we see that $\delta(M') < \delta(M) $. By induction, there is a complex \[ 0 \to J^0 \to J^1 \to \ldots \to J^{N'} \to 0\] such that $J^0 = M'$ and $\delta(J^i) \le \delta(M') - i + 1$ for $i>0$. Now set $I^0 = M$,  $I^1 = \Sigma^n M$ and $I^{i+1} = J^i$ for $i>1$, and observe that $I^{\bullet}$ has the desired property.
\end{proof}

\begin{remark}
	Sometimes it is possible to improve the bounds on $h^i(M)$ is the above proposition. By combining \cite[Theorem~1.1]{regthm}  and \cite[Theorem~A]{castelnuovo-regularity}, we see that \[ h^i(M) \le t_n(M) -n -i \le t_0 + t_1 -1 - i.\] Thus, if $\delta(M)$ is large compared to $t_1(M)$, these bounds are better than the ones in the theorem above.
\end{remark}

%
%

\begin{definition}
Let $M$ be an $\FI$-module. We define the \emph{local degree} of $M$ to be the quantity $h^{\max}(M) \coloneq \max_{i\geq 0} h^i(M)$.
\end{definition}

Recall that Theorem~\ref{thm:shift-theorem} tells us that a sufficiently high shift $\Sigma^nM$ of any $\FI$-module presented in finite degree will be semi-induced. The following corollary tells us that the local degree quantifies precisely how much we need to shift $M$ for this to happen.

\begin{corollary} [{Li--Ramos \cite[Theorem F, Part~(2)]{li-ramos}}] 
\label{cor:hmax}
Let $M$ be an $\FI$-module presented in finite degrees. Then $\Sigma^n M$ is semi-induced if and only if $n > h^{\max}(M)$. In particular, $\rR \Gamma_{\fm}(M) = 0$ if and only if $M$ is semi-induced.
\end{corollary}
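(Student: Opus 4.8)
The plan is to reduce everything to the complex supplied by Theorem~\ref{thm:local-cohomology} and to shift it. Fix such a complex $0\to I^0\to I^1\to\cdots\to I^N\to 0$ for $M$, so $I^0=M$, each $I^i$ with $i>0$ is semi-induced, each homology module $\rH^i(I^\bullet)$ is torsion, and $\rH^i(I^\bullet)=\rH^i_{\fm}(M)$. Applying $\Sigma^n$, which is exact, preserves torsion modules, and sends semi-induced modules to semi-induced ones (from the splitting $\Sigma\cI(V)\cong\cI(V\oplus\Sigma V)$ and exactness), we obtain a complex $0\to\Sigma^n M\to\Sigma^n I^1\to\cdots\to\Sigma^n I^N\to 0$ whose terms in positive degrees are semi-induced and whose $i$-th homology is $\Sigma^n\rH^i_{\fm}(M)$. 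Since for any graded module $T$ one has $\Sigma^nT=0$ if and only if $\deg T<n$, the condition $n>h^{\max}(M)$ is equivalent to $\Sigma^n\rH^i_{\fm}(M)=0$ for all $i$, i.e.\ to the complex $\Sigma^n I^\bullet$ being exact. So it suffices to prove the following: for a complex $0\to N\to J^1\to\cdots\to J^N\to 0$ with all $J^i$ ($i>0$) semi-induced and all homology torsion, the complex is exact if and only if $N$ is semi-induced; we then apply this with $N=\Sigma^nM$, $J^i=\Sigma^nI^i$. (Note that $\Sigma^nM$ need not be invoked as an instance of Theorem~\ref{thm:local-cohomology}; this self-contained statement is all we use.)

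\textbf{The self-contained statement.} For the ``only if'' direction, assume the complex is exact. Then the cycle modules $Z^j\coloneq\ker(J^j\to J^{j+1})=\im(J^{j-1}\to J^j)$ fit into short exact sequences $0\to Z^j\to J^j\to Z^{j+1}\to 0$, with $Z^N=J^N$ semi-induced; descending induction with Proposition~\ref{prop:semi-induced-ses} shows each $Z^j$ is semi-induced, and since $N\to J^1$ is injective (its kernel is the homology $\rH^0$, which vanishes by exactness) we get $N\cong Z^1$ is semi-induced. For the ``if'' direction, assume $N$ is semi-induced; recall that semi-induced modules are torsion-free (induced modules are torsion-free by the explicit formula for $\cI(V)$, and this passes to iterated extensions). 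Set $C_i\coloneq\coker(J^{i-1}\to J^i)$ with $J^0=N$, so $C_1=\coker(N\to J^1)$; the map $N\to J^1$ is injective because $\rH^0\subseteq N$ is torsion, hence $0\to N\to J^1\to C_1\to 0$ and $C_1$ is semi-induced by Proposition~\ref{prop:semi-induced-ses}. Now $\rH^i(J^\bullet)\cong\ker(C_i\to J^{i+1})$ is a torsion submodule of the torsion-free module $C_i$, so it vanishes, $C_i\hookrightarrow J^{i+1}$, and $C_{i+1}$ is semi-induced by Proposition~\ref{prop:semi-induced-ses}; iterating shows every $\rH^i$ vanishes for $i<N$, while $\rH^N=C_N$ is both torsion and (being semi-induced) torsion-free, hence $0$. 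Thus the complex is exact.

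\textbf{Conclusion.} Combining, $\Sigma^nM$ is semi-induced $\iff$ $\Sigma^nI^\bullet$ is exact $\iff$ $\Sigma^n\rH^i_{\fm}(M)=0$ for all $i$ $\iff$ $n>\deg\rH^i_{\fm}(M)=h^i(M)$ for all $i$ $\iff$ $n>h^{\max}(M)$. For the last sentence, $\rR\Gamma_{\fm}(M)=0$ means all $\rH^i_{\fm}(M)=0$, i.e.\ $h^i(M)=-1$ for all $i$, i.e.\ $h^{\max}(M)=-1$, which by the main statement (with $n=0$) is equivalent to $M=\Sigma^0M$ being semi-induced.

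\textbf{Main obstacle.} The substance is already packaged into the two cited inputs — the existence and homological interpretation of the complex in Theorem~\ref{thm:local-cohomology}, and Proposition~\ref{prop:semi-induced-ses} — so within this proof there is no serious difficulty; the only points requiring care are verifying that $\Sigma$ commutes with all the relevant operations (exactness, torsion, semi-inducedness, the effect on $\deg$) and keeping the two inductions (ascending via cokernels, descending via cycles) straight, with torsion-freeness of semi-induced modules doing the work of forcing the torsion homology to vanish.
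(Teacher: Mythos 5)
Your proof is correct and follows essentially the same route as the paper: shift the complex $I^\bullet$ supplied by Theorem~\ref{thm:local-cohomology}, note that exactness of $\Sigma^n I^\bullet$ is equivalent to $n>h^{\max}(M)$, and use Proposition~\ref{prop:semi-induced-ses} (descending induction on cycles) to transfer semi-inducedness to $\Sigma^n M$. The only cosmetic difference is in the converse, where the paper simply cites Theorem~\ref{thm:local-cohomology} to conclude that a semi-induced module has vanishing local cohomology, while you re-derive this directly from torsion-freeness of semi-induced modules; this is the same mechanism the paper itself uses inside the proof of that theorem.
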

\begin{proof} Let $I^{\bullet}$ be the complex constructed in the previous theorem. If $n > h^{\max}(M)$, then $\Sigma^n I^{\bullet}$ is exact (shifts commute with local cohomology). Since a shift of a semi-induced module is semi-induced, we see that $\Sigma^n I^i$ is semi-induced for $i > 0$. By Proposition~\ref{prop:semi-induced-ses}, we conclude that $\Sigma^n I^0 = \Sigma^n M$ is semi-induced. Conversely, if $n \le h^{\max}(M)$, then $\rH^i_{\fm}(\Sigma^n M)$ is nonzero for some $i$ (shifts commute with local cohomology). However, $\Sigma^n M$ cannot be semi-induced by the theorem above.
\end{proof}

For any ring $\bk$, we say that an $\FI$-module $M$ is an $\FI$-module over $\bk$ if the functor $M\colon \FI \to \Mod_{\Z}$ factors through $\Mod_{\bk}\to \Mod_{\Z}$.
\begin{proposition}
\label{prop:over-fields}
Suppose $\bk$ is a field, and let $M$ be an $\FI$-module over $\bk$ which is presented in finite degrees and with $M_n$ finite dimensional for all $n$. Then there exists an integer-valued polynomial $p \in \mathbb{Q}[X]$ of degree $\delta(M)$ such that $\dim_{\bk} M_n = p(n)$ for $n > h^{\max}(M)$.
\end{proposition}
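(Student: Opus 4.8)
The plan is to reduce to the case of a semi-induced module by applying a sufficiently high shift, using Corollary~\ref{cor:hmax}, and then compute dimensions of semi-induced modules explicitly. Since $M$ is presented in finite degrees, so is $\Sigma^n M$ for every $n$, and $h^{\max}(M)$ and $\delta(M)$ are finite; the point is that $\Sigma^a M$ is genuinely semi-induced once $a>h^{\max}(M)$, and $(\Sigma^a M)_n\iso M_{n+a}$, so a polynomial formula for $\dim_\bk(\Sigma^a M)_n$ translates directly into one for $\dim_\bk M_m$ in the range $m>h^{\max}(M)$.

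First I would record the dimension count for induced modules. If $V$ is an $\FB$-module over $\bk$ with each $\dim_\bk V_m$ finite and $\deg V$ finite, then from $\cI(V)_{[n]}\iso\bigoplus_{m}\bk[\Hom_{\FI}([m],[n])]\otimes_{\bk[S_m]}V_m$ and the fact that $S_m$ acts freely on $\Hom_{\FI}([m],[n])$ with exactly $\binom nm$ orbits (indexed by the image $m$-subset), one gets $\bk[\Hom_{\FI}([m],[n])]\iso\bk[S_m]^{\oplus\binom nm}$ as right $\bk[S_m]$-modules, hence \[\dim_\bk\cI(V)_n=\sum_{m=0}^{\deg V}\binom nm\,\dim_\bk V_m.\] This is a $\Z_{\ge 0}$-linear combination of the integer-valued polynomials $\binom Xm$, so it is an integer-valued polynomial, and because the coefficients are non-negative the leading term cannot cancel, so its degree is exactly $\deg V$ (the zero polynomial when $V=0$, which we read as degree $-1$). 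Next, for a semi-induced $\FI$-module $N$ with all $N_n$ finite-dimensional, choose a filtration $0=N_0\subseteq N_1\subseteq\cdots\subseteq N_\ell=N$ with $N_i/N_{i-1}\iso\cI(V^{(i)})$. Each $\cI(V^{(i)})$ is a subquotient of $N$, so by Proposition~\ref{prop:delta}\ref{part:subquotient} and \ref{part:deltat0semi} we have $\deg V^{(i)}=t_0(\cI(V^{(i)}))=\delta(\cI(V^{(i)}))\le\delta(N)<\infty$; moreover $V^{(i)}_m$ embeds in $\cI(V^{(i)})_m$, which is a subquotient of $N_m$, so $\dim_\bk V^{(i)}_m<\infty$. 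Evaluating the filtration at $[n]$ and using additivity of dimension over a field gives $\dim_\bk N_n=\sum_i\dim_\bk\cI(V^{(i)})_n$, an integer-valued polynomial of degree $\max_i\deg V^{(i)}$; and repeated application of Proposition~\ref{prop:delta}\ref{part:LMN} to the filtration gives $\delta(N)=\max_i\delta(\cI(V^{(i)}))=\max_i\deg V^{(i)}$, so the degree is exactly $\delta(N)$.

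To conclude, set $a=h^{\max}(M)+1$. By Corollary~\ref{cor:hmax}, $\Sigma^a M$ is semi-induced, and $(\Sigma^a M)_n\iso M_{n+a}$ is finite-dimensional for every $n$, so the previous step produces an integer-valued polynomial $r$ of degree $\delta(\Sigma^a M)$ with $\dim_\bk M_{n+a}=r(n)$ for all $n\ge 0$. Since $\delta(\Sigma^a M)=\delta(M)$ by Proposition~\ref{prop:delta}\ref{part:deltaSigma}, the polynomial $p(X):=r(X-a)$ is integer-valued of degree $\delta(M)$, and $\dim_\bk M_m=p(m)$ for all $m\ge a$, i.e.\ for all $m>h^{\max}(M)$, as desired.

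The only delicate bookkeeping is ensuring that the polynomial obtained has degree \emph{exactly} $\delta(M)$ and not merely at most $\delta(M)$. This is precisely where the non-negativity of the coefficients in the formula for $\dim_\bk\cI(V)_n$ (so no cancellation of top-degree terms) and the identity $t_0=\delta$ for semi-induced modules, together with additivity of $\delta$ along short exact sequences (Proposition~\ref{prop:delta}\ref{part:deltat0semi}, \ref{part:LMN}, \ref{part:subquotient}), do the work. Everything else—the shift identity $(\Sigma^a M)_n\iso M_{n+a}$, the free $S_m$-action on injections, additivity of dimension—is routine.
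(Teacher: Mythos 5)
Your proposal is correct and follows essentially the same route as the paper: shift by $h^{\max}(M)+1$ to reach a semi-induced module (Corollary~\ref{cor:hmax}), filter it by induced modules, and use the binomial formula $\dim_\bk\cI(V)_n=\sum_j\binom nj\dim_\bk V_j$ together with Proposition~\ref{prop:delta} to pin the degree at exactly $\delta(M)$. Your explicit remark that non-negativity of the coefficients prevents cancellation of the top-degree term is the same point the paper makes by noting that at least one graded piece has stable degree exactly $\delta(M)$.
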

\begin{proof} First assume that $M = \cI(V)$ is an induced module. By the previous corollary, $h^{\max}(M) = -1$. And we have $\deg V = \delta(M)$. In this case the result follows from the following identity that holds for $n \ge 0$: \[\dim_{\bk} M_n = \sum_{j = 0}^{\deg V} \binom{n}{j} \dim_{\bk} V_j.\]
	
In general, let $N = h^{\max}(M) + 1$ and set $M' = \Sigma^N M$. It is enough to show that $\dim_{\bk} M'_n$ agrees with a polynomial of degree $\delta(M)$ for all $n \ge 0$. By Proposition~\ref{prop:delta}, we have $\delta(M') =  \delta(M)$, and by the previous corollary $M'$ is semi-induced. Then $M'$ admits a finite filtration such that the graded pieces are induced modules of stable degree at most $\delta(M)$ and at least one such graded piece is of stable degree exactly $\delta(M)$ (Proposition~\ref{prop:delta}\ref{part:LMN}). The result thus follows from the previous paragraph.
\end{proof}

\section{Properties of stable degree and local degree} 
\label{sec:local-and-stable-degree}
In this section, we show that the generation and presentation degrees of an $\FI$-module can be bounded linearly in terms of the stable and local degrees, and vice versa, and that \emph{together} they behave well under taking kernels and cokernels.

\begin{proposition}
\label{prop:tot-degree1}
Let $M$ be an $\FI$-module presented in finite degrees. Then we have the following: \begin{enumerate}[label={(\arabic*)}]
\item\label{bound:delta} $\delta(M)\leq t_0(M)$.
\item\label{bound:hmax} $h^{\max}(M)\leq t_0(M) + \max(t_0(M),t_1(M))-1$.
\item\label{bound:t0} $t_0(M) \le \delta(M) + h^{\max}(M) + 1$.
\item\label{bound:t1} $t_1(M) \le \delta(M) + 2 h^{\max}(M) + 2$.
\end{enumerate}
\end{proposition}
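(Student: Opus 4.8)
The plan is to get (1) and (2) straight out of the machinery already assembled, to prove (3) from the long exact sequence of Theorem~\ref{thm:shift-of-homology}, and then to bootstrap (4) from (3); so the only real content is (3).

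Part (1) is literally Proposition~\ref{prop:delta}\ref{part:deltat0gen}. For part (2) I would bound each $h^i(M)$ separately using Theorem~\ref{thm:local-cohomology} together with part (1). Indeed, Theorem~\ref{thm:local-cohomology}(2) gives $h^0(M)\le \min(t_0(M),t_1(M))+t_1(M)-1\le t_0(M)+\max(t_0(M),t_1(M))-1$; Theorem~\ref{thm:local-cohomology}(3) combined with (1) gives $h^1(M)\le \delta(M)+t_0(M)-1\le 2t_0(M)-1\le t_0(M)+\max(t_0(M),t_1(M))-1$; and Theorem~\ref{thm:local-cohomology}(4) combined with (1) gives, for $i\ge 2$, $h^i(M)\le 2\delta(M)-2(i-1)\le 2t_0(M)-2$, which is again $\le t_0(M)+\max(t_0(M),t_1(M))-1$. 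Taking the maximum over $i$ yields $h^{\max}(M)\le t_0(M)+\max(t_0(M),t_1(M))-1$.

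For part (3) the key point is that the long exact sequence of Theorem~\ref{thm:shift-of-homology}, applied to the one-term complex $M$, has tail $\rH^{\FI}_0(M)\to \rH^{\FI}_0(\Sigma M)\to \Sigma\rH^{\FI}_0(M)\to 0$. In particular $\rH^{\FI}_0(\Sigma M)$ surjects onto $\Sigma\rH^{\FI}_0(M)$, so $t_0(\Sigma M)=\deg \rH^{\FI}_0(\Sigma M)\ge \deg \Sigma\rH^{\FI}_0(M)=t_0(M)-1$ (the last equality holds for $M\neq 0$; for $M=0$ the inequality is trivial). This is the complement of the familiar $t_0(\Sigma M)\le t_0(M)$. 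Iterating, $t_0(M)\le t_0(\Sigma^N M)+N$ for every $N\ge 0$. Now take $N=h^{\max}(M)+1$: by Corollary~\ref{cor:hmax} the module $\Sigma^N M$ is semi-induced, so by Proposition~\ref{prop:delta}\ref{part:deltat0semi} and \ref{part:deltaSigma} we have $t_0(\Sigma^N M)=\delta(\Sigma^N M)=\delta(M)$, and therefore $t_0(M)\le \delta(M)+h^{\max}(M)+1$.

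For part (4) I would bootstrap from (3). Choose a projective $\FI$-module $P$ with a surjection $P\onto M$ realizing the generation degree, i.e.\ $t_0(P)=t_0(M)$ (for instance $P=\cI(\hat V)$ for a projective $\FB$-module $\hat V$ covering the $\FB$-module $\rH^{\FI}_0(M)$ degreewise), and let $N=\ker(P\onto M)$; since $P$ and $M$ are presented in finite degrees, so is $N$ by Theorem~\ref{thm:homology-acyclics}\ref{part:abeliancategory}. The long exact sequence in $\FI$-homology, using $\rH^{\FI}_1(P)=0$, gives an injection $\rH^{\FI}_1(M)\hookrightarrow \rH^{\FI}_0(N)$, hence $t_1(M)\le t_0(N)$. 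Applying (3) to $N$ yields $t_0(N)\le \delta(N)+h^{\max}(N)+1$. Here $\delta(N)\le \delta(P)=t_0(P)=t_0(M)$ by Proposition~\ref{prop:delta}\ref{part:subquotient} and \ref{part:deltat0semi}, and $t_0(M)\le \delta(M)+h^{\max}(M)+1$ by (3); moreover the local-cohomology long exact sequence of $0\to N\to P\to M\to 0$, together with $\rR\Gamma_{\fm}(P)=0$ (Corollary~\ref{cor:hmax}), gives $\rH^i_{\fm}(N)\iso \rH^{i-1}_{\fm}(M)$ for $i\ge 1$ and $\rH^0_{\fm}(N)=0$, so $h^{\max}(N)=h^{\max}(M)$. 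Combining, $t_1(M)\le t_0(N)\le \bigl(\delta(M)+h^{\max}(M)+1\bigr)+h^{\max}(M)+1=\delta(M)+2h^{\max}(M)+2$.

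The step I expect to be least obvious is the one in (3): a priori one might expect to need the full local-cohomology/spectral-sequence apparatus to control $t_0(M)$ by $\delta(M)$ and $h^{\max}(M)$, whereas in fact the long exact sequence already recorded in Theorem~\ref{thm:shift-of-homology} makes $t_0(M)\le t_0(\Sigma M)+1$ immediate, after which (3) and then (4) are formal bookkeeping with the properties of $\delta$, $h^{\max}$, and Corollary~\ref{cor:hmax}. The only points requiring a little care are the constant-chasing in (4) — namely that $\delta(N)\le t_0(M)$ and $h^{\max}(N)=h^{\max}(M)$ — which is why the final bound there is $2h^{\max}(M)$ rather than $h^{\max}(M)$.
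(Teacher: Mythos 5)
Your proof is correct and follows essentially the same route as the paper: parts (1) and (2) are read off from Proposition~\ref{prop:delta} and Theorem~\ref{thm:local-cohomology}, part (3) shifts by $h^{\max}(M)+1$ to reach a semi-induced module and uses $t_0(M)\le t_0(\Sigma^N M)+N$, and part (4) bounds $t_1(M)$ by the generation degree of the kernel of a presentation. The only differences are cosmetic: you re-derive $t_0(M)\le t_0(\Sigma M)+1$ from the long exact sequence of Theorem~\ref{thm:shift-of-homology} where the paper cites \cite{fi-noeth}, and in (4) you justify that the kernel becomes semi-induced after $h^{\max}(M)+1$ shifts via the local-cohomology long exact sequence ($h^{\max}(N)=h^{\max}(M)$) rather than via Proposition~\ref{prop:semi-induced-ses}; both give the identical constants.
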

\begin{proof} Part~\ref{bound:delta} is Proposition~\ref{prop:delta}\ref{part:deltat0gen}. Part~\ref{bound:hmax} is obtained by combining the different cases of Theorem~\ref{thm:local-cohomology}, since $\delta(M)\leq t_0(M)$.

For the remaining parts, set $n = h^{\max}(M) + 1$.  Since $\Sigma^n M$ is semi-induced (Corollary~\ref{cor:hmax}), we have $t_0(\Sigma^n M) = \delta(\Sigma^n M) = \delta(M)$. This shows that (see \cite[Corollary~2.13]{fi-noeth}) \[t_0(M) \le t_0(\Sigma^n M) + n = \delta(M) + h^{\max}(M) + 1.\] This proves Part~\ref{bound:t0}. Part~\ref{bound:t1}: Consider a presentation \[ 0 \to K \to F \to M \to 0 \] with $F$ semi-induced and generated in  degrees $ \le t_0(M)$. By Proposition~\ref{prop:semi-induced-ses}, we see that $\Sigma^n K$ is semi-induced, since both $\Sigma^nF$ and $\Sigma^n M$ are. This shows that \[t_1(M) \le t_0(K) \le t_0(\Sigma^n K) + n = \delta(\Sigma^n K) + n.\]
By Proposition~\ref{prop:delta}\ref{part:subquotient}, we have $\delta(\Sigma^nK)\leq \delta(\Sigma^n F)$, and since $F$ is semi-induced we have $\delta(\Sigma^n F)=\delta(F) = t_0(F)\leq t_0(M)$. Combined with the bound  $t_0(M)\leq \delta(M)+n$ from Part~\ref{bound:t0}, we obtain
\[t_1(M)\leq \delta(\Sigma^n K)+n\leq t_0(M) + n \le \delta(M)+2n=\delta(M) + 2 h^{\max}(M) + 2.\qedhere\]
\end{proof}

\begin{proposition}
\label{prop:filtration}
Suppose the $\FI$-module $M$ has a finite filtration $M=F^0\supset \cdots\supset F^k=0$, and let $N^i=F^i/F^{i+1}$. Then $\delta(M)=\max_i \delta(N^i)$ and $h^{\max}(M)\leq \max_i h^{\max}(N^i)$.
\end{proposition}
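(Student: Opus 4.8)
The plan is to reduce everything to the case of a short exact sequence, by induction on the length $k$ of the filtration, and then treat the two assertions separately. Throughout I would assume, as everywhere in this section, that $M$ is presented in finite degrees; then by Theorem~\ref{thm:homology-acyclics}\ref{part:abeliancategory} every $F^i$ and every subquotient $N^i$ is presented in finite degrees, so the results of Section~\ref{sec:preliminaries} apply to all of them. The base case $k\leq 1$ is immediate since then $M=N^0$. For the inductive step I would single out the short exact sequence $0\to F^1\to M\to N^0\to 0$ and note that $F^1\supset F^2\supset\cdots\supset F^k=0$ is a filtration of $F^1$ of length $k-1$ with subquotients $N^1,\dots,N^{k-1}$, to which the inductive hypothesis applies.

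For the stable-degree statement, the short exact sequence above together with Proposition~\ref{prop:delta}\ref{part:LMN} gives $\delta(M)=\max(\delta(F^1),\delta(N^0))$, and the inductive hypothesis gives $\delta(F^1)=\max_{i\geq 1}\delta(N^i)$; combining these yields $\delta(M)=\max_i\delta(N^i)$, as desired. This half is essentially formal.

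For the local-degree statement, the key input is that local cohomology, being the right-derived functor of the left-exact functor $\Gamma_{\fm}$ on the Grothendieck category $\Mod_{\FI}$, turns the short exact sequence $0\to F^1\to M\to N^0\to 0$ into a long exact sequence
\[\cdots\to \rH^i_{\fm}(F^1)\to \rH^i_{\fm}(M)\to \rH^i_{\fm}(N^0)\to \rH^{i+1}_{\fm}(F^1)\to\cdots.\]
Then I would use the elementary observation that for a three-term exact sequence of non-negatively graded abelian groups $A\to B\to C$ one has $\deg B\leq\max(\deg A,\deg C)$: if $A_n=0$ and $C_n=0$, then $\im(A_n\to B_n)=0$, so $B_n$ injects into $C_n=0$. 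Applied to the three terms around $\rH^i_{\fm}(M)$ this gives $h^i(M)\leq\max(h^i(F^1),h^i(N^0))$ for every $i$, hence $h^{\max}(M)\leq\max(h^{\max}(F^1),h^{\max}(N^0))$; combined with the inductive bound $h^{\max}(F^1)\leq\max_{i\geq 1}h^{\max}(N^i)$ this yields $h^{\max}(M)\leq\max_i h^{\max}(N^i)$.

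The only point requiring a little care is the last paragraph: one must be sure the long exact sequence in local cohomology is actually available, i.e.\ that $\rR\Gamma_{\fm}$ behaves like an honest derived functor on short exact sequences (which is fine since $\Mod_{\FI}$ has enough injectives), and one should double-check that the inequality $\deg B\leq\max(\deg A,\deg C)$ is being applied at the correct spot of the long exact sequence. Beyond that, the argument is bookkeeping.
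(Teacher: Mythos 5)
Your proposal is correct and follows essentially the same route as the paper: induction on the filtration length, Proposition~\ref{prop:delta}\ref{part:LMN} for the stable degree, and the long exact sequence in local cohomology (with the observation that $\deg B\leq\max(\deg A,\deg C)$ for an exact triple $A\to B\to C$) for the local degree. The extra care you take with the base case and with the existence of the derived functor is harmless but not needed beyond what the paper already assumes.
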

\begin{proof}
The claim for $\delta$ follows from Proposition~\ref{prop:delta}\ref{part:LMN} by induction. Given $0\to L\to M\to N\to 0$, the long exact sequence $\cdots\to \rH^i_{\fm}(L)\to \rH^i_{\fm}(M)\to \rH^i_{\fm}(N)\to \cdots$ shows that $h^{\max}(M)$ is bounded by the maximum of $h^{\max}(L)$ and $h^{\max}(N)$. The proposition follows by induction.
\end{proof}

The following proposition is the key to analyzing \TypeA{} arguments. It gives us control over the stable and local degrees under taking kernels and cokernels.

\begin{proposition}
\label{prop:total-degree-ker-coker}
Let $f \colon A \to B$ be a map of $\FI$-modules presented in finite degrees. Then we have the following:
\begin{enumerate}[label={(\arabic*)}]
\item \label{part:deltaker} $\delta(\ker f) \le \delta(A)$.
\item \label{part:deltacoker}$\delta(\coker f) \le \delta(B)$.
\item \label{part:hmaxker}$h^{\max}(\ker f) \le \max(2 \delta(A) - 2, h^0(A), h^1(A), h^0(B)) \le \max(2 \delta(A) -2, h^{max}(A), h^{max}(B))$.
\item \label{part:hmaxcoker}$h^{\max}(\coker f) \le \max(2 \delta(A) - 2, h^{max}(A), h^{max}(B))$. 
\end{enumerate}
\end{proposition}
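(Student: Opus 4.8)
The plan is to factor $f$ through its image. Set $I=\im f$ and work with the two short exact sequences
\[0\to\ker f\to A\to I\to 0,\qquad 0\to I\to B\to\coker f\to 0.\]
By Theorem~\ref{thm:homology-acyclics}\ref{part:abeliancategory} the category of $\FI$-modules presented in finite degrees is abelian, so $\ker f$, $I$, and $\coker f$ are all presented in finite degrees and every invariant below is defined (and finite, by Theorem~\ref{thm:local-cohomology}). Parts~\ref{part:deltaker} and~\ref{part:deltacoker} are then immediate: $\ker f$ is a submodule of $A$ and $\coker f$ is a quotient of $B$, so Proposition~\ref{prop:delta}\ref{part:subquotient} gives $\delta(\ker f)\le\delta(A)$ and $\delta(\coker f)\le\delta(B)$; the same reasoning applied to the quotient $I$ of $A$ gives $\delta(I)\le\delta(A)$, which I will use below.

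For the local-degree statements I would first extract, from the long exact sequences in local cohomology $\rH^\bullet_{\fm}$ of the two short exact sequences, the standard principle: in $0\to L\to M\to N\to 0$ the group $\rH^j_{\fm}(L)$ is an extension of a submodule of $\rH^j_{\fm}(M)$ by a quotient of $\rH^{j-1}_{\fm}(N)$, and $\rH^j_{\fm}(N)$ is an extension of a submodule of $\rH^{j+1}_{\fm}(L)$ by a quotient of $\rH^j_{\fm}(M)$. Applied to our two sequences this yields $h^j(\ker f)\le\max(h^{j-1}(I),h^j(A))$ (so $h^0(\ker f)\le h^0(A)$), $h^j(I)\le\max(h^j(A),h^{j+1}(\ker f))$, $h^0(I)\le h^0(B)$, and $h^i(\coker f)\le\max(h^i(B),h^{i+1}(I))$. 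The other ingredient is the tail bound of Theorem~\ref{thm:local-cohomology}(4): for $j\ge 2$ we have $h^j(\ker f)\le 2\delta(\ker f)-2(j-1)\le 2\delta(A)-2$ and $h^j(I)\le 2\delta(I)-2(j-1)\le 2\delta(A)-2$, using $\delta(\ker f),\delta(I)\le\delta(A)$.

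Assembling these proves Part~\ref{part:hmaxker}: combine $h^0(\ker f)\le h^0(A)$, then $h^1(\ker f)\le\max(h^0(I),h^1(A))\le\max(h^0(B),h^1(A))$, and $h^j(\ker f)\le 2\delta(A)-2$ for $j\ge 2$, to get $h^{\max}(\ker f)\le\max(2\delta(A)-2,h^0(A),h^1(A),h^0(B))$; the coarser form follows since $h^0(A),h^1(A)\le h^{\max}(A)$ and $h^0(B)\le h^{\max}(B)$. For Part~\ref{part:hmaxcoker} I would first control the values $h^j(I)$ with $j\ge 1$ feeding into $h^i(\coker f)\le\max(h^i(B),h^{i+1}(I))$: for $j\ge 2$ the tail bound gives $h^j(I)\le 2\delta(A)-2$, while $h^1(I)\le\max(h^1(A),h^2(\ker f))\le\max(h^1(A),2\delta(A)-2)$. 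Hence $h^0(\coker f)\le\max(h^0(B),h^1(A),2\delta(A)-2)$ and $h^i(\coker f)\le\max(h^i(B),2\delta(A)-2)$ for $i\ge 1$, so $h^{\max}(\coker f)\le\max(2\delta(A)-2,h^1(A),h^{\max}(B))\le\max(2\delta(A)-2,h^{\max}(A),h^{\max}(B))$.

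The one delicate point — and the reason the argument is organized this way — is keeping every estimate phrased in terms of $\delta(A)$ rather than $\delta(B)$ or $\delta(\coker f)$: applying Theorem~\ref{thm:local-cohomology}(4) to $\coker f$ directly would only give $2\delta(\coker f)-2\le 2\delta(B)-2$, which is weaker than claimed, so one must route the higher local cohomology of $\coker f$ through $h^{i+1}(I)$ together with $\delta(I)\le\delta(A)$. Apart from this, the only bookkeeping is the degree convention $\deg 0=-1$, under which a negative right-hand side simply records the vanishing of the relevant local cohomology group; I do not anticipate any further difficulty.
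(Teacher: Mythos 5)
Your proof is correct, and it takes a genuinely different route from the paper's. The paper proves Parts (3) and (4) by assembling the four-term exact complex $0\to\ker f\to A\to B\to\coker f\to 0$ into a double complex whose extra columns are built (via two auxiliary lemmas) from iterated shifts with controlled stable degree, and then reads the bounds off the spectral sequence of that double complex, which converges to zero. You instead factor $f$ through $I=\im f$ and run the two long exact sequences in local cohomology coming from $0\to\ker f\to A\to I\to 0$ and $0\to I\to B\to\coker f\to 0$, combined with the tail bound $h^j(N)\le 2\delta(N)-2(j-1)$ of Theorem~\ref{thm:local-cohomology}(4) applied to $\ker f$ and to $I$ (where $\delta(I)\le\delta(A)$ by Proposition~\ref{prop:delta}\ref{part:subquotient}). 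The inequalities you extract, e.g.\ $h^i(\coker f)\le\max(h^i(B),h^{i+1}(I))\le\max(h^i(B),h^{i+1}(A),h^{i+2}(\ker f))$, are exactly the ones the paper's spectral sequence produces, so the two arguments land on identical bounds; your version is more elementary in that it needs no double-complex machinery beyond the long exact sequence, while the paper's construction is reused elsewhere and generalizes more readily to longer exact complexes. The point you flag as delicate --- that the tail bound must be applied to $\ker f$ and $I$ (whose stable degrees are controlled by $\delta(A)$) rather than to $\coker f$ directly --- is precisely the right subtlety, and you handle it correctly.
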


The proof of this proposition occupies the remainder of this section, but first we establish the following lemmas.

\begin{lemma}
Let $M^{\bullet}$ be a bounded complex of $\FI$-modules presented in finite degrees which is exact in high enough degree. 
Denote the cokernel of the map $\id \to \Sigma^a$ by $\iterDelta_a$. Then $\rH^i(\iterDelta_a(M^{\bullet}))$ vanishes in high enough degree.
\end{lemma}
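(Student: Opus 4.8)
The statement to prove is that if $M^\bullet$ is a bounded complex of $\FI$-modules presented in finite degrees which is exact in high enough degrees, then $\rH^i(\iterDelta_a(M^\bullet))$ vanishes in high enough degrees, where $\iterDelta_a = \coker(\id \to \Sigma^a)$. The plan is to first reduce to the functor $\Sigma$ itself. The functor $\Sigma^a$ is exact, so it commutes with taking homology: $\rH^i(\Sigma^a M^\bullet) = \Sigma^a \rH^i(M^\bullet)$. Since $\rH^i(M^\bullet)$ vanishes in high degrees by hypothesis (that is what ``exact in high enough degrees'' means), and $\Sigma$ only lowers degree, $\rH^i(\Sigma^a M^\bullet)$ also vanishes in high enough degrees. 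So the issue is entirely about the short exact sequence of complexes relating $M^\bullet$, $\Sigma^a M^\bullet$, and $\iterDelta_a(M^\bullet)$.

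First I would observe that for each $a>0$ there is a short exact sequence of complexes
\[ 0 \to M^\bullet \to \Sigma^a M^\bullet \to \iterDelta_a(M^\bullet) \to 0, \]
where exactness is checked degreewise: the natural map $\id \to \Sigma^a$ is a monomorphism on $\FI$-modules (it is injective on each $\FI$-module, since $\Sigma^a$ is given by evaluation at a larger set and the structure map for the inclusion is injective when the module is, or more simply because $\iterDelta_a$ is defined as the cokernel and the map is known to be injective), and $\iterDelta_a(M^i)$ is the cokernel by definition. Then I would pass to the associated long exact sequence in homology:
\[ \cdots \to \rH^i(M^\bullet) \to \rH^i(\Sigma^a M^\bullet) \to \rH^i(\iterDelta_a(M^\bullet)) \to \rH^{i-1}(M^\bullet) \to \cdots. \]
From this sequence, $\rH^i(\iterDelta_a(M^\bullet))$ is squeezed between (a quotient of) $\rH^i(\Sigma^a M^\bullet)$ and a submodule of $\rH^{i-1}(M^\bullet)$; concretely there is a short exact sequence
\[ 0 \to \coker\!\big(\rH^i(M^\bullet)\to\rH^i(\Sigma^a M^\bullet)\big) \to \rH^i(\iterDelta_a(M^\bullet)) \to \ker\!\big(\rH^{i-1}(M^\bullet)\to\rH^{i-1}(\Sigma^a M^\bullet)\big) \to 0. \]
Both outer terms are subquotients of $\rH^j(\Sigma^a M^\bullet)$ or $\rH^j(M^\bullet)$ for $j \in \{i-1, i\}$, all of which vanish in high enough degree by the previous paragraph. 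Since the degree of an $\FI$-module in the middle of a short exact sequence is bounded by the max of the degrees of the outer terms, $\rH^i(\iterDelta_a(M^\bullet))$ vanishes in high enough degree. Because $M^\bullet$ is bounded, only finitely many $i$ are relevant, so a single degree bound works uniformly.

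I do not expect any serious obstacle here; the only points requiring a little care are (i) confirming that $\id \to \Sigma^a$ is injective on the complex $M^\bullet$ degreewise so that the short exact sequence of complexes genuinely exists (this is standard, and follows from the definition of $\iterDelta_a$ together with injectivity of the unit $M \to \Sigma^a M$ for $\FI$-modules), and (ii) keeping track of the fact that "exact in high enough degree" is exactly the statement that each $\rH^i(M^\bullet)$ has finite degree, which is what feeds into the degree estimate. The boundedness of $M^\bullet$ ensures the quantifier "in high enough degree" can be chosen uniformly in $i$. If one wants an explicit bound, chasing the long exact sequence gives $\deg \rH^i(\iterDelta_a(M^\bullet)) \le \max\big(\deg\rH^i(M^\bullet),\deg\rH^{i-1}(M^\bullet)\big)$ since $\deg \Sigma^a N \le \deg N$; but for the qualitative statement as phrased, the squeeze argument above suffices.
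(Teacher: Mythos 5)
There is a genuine gap at the very first step: the natural map $\id \to \Sigma^a$ is \emph{not} a monomorphism on $\FI$-modules in general, so the short exact sequence
\[0 \to M^\bullet \to \Sigma^a M^\bullet \to \iterDelta_a(M^\bullet) \to 0\]
on which your whole argument rests need not exist. The component of $M \to \Sigma^a M$ at a set $S$ is the structure map induced by the inclusion $S \hookrightarrow S \sqcup [a]$, and its kernel consists of torsion elements; the map is injective if and only if $M$ is torsion-free, i.e.\ $\rH^0_{\fm}(M)=0$, which is not assumed. A concrete counterexample among modules presented in finite degrees is the $\FI$-module with $M_\emptyset=\Z$ and $M_S=0$ for $S\neq\emptyset$, for which $M\to\Sigma^a M$ is the zero map. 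This is exactly why the paper states the split short exact sequence $0\to\cI(V)\to\Sigma\cI(V)\to\Delta\cI(V)\to 0$ only for \emph{induced} modules. Your two parenthetical justifications (``the structure map for the inclusion is injective'' and ``the map is known to be injective'') both assert the false statement rather than prove it.

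The strategy is repairable: replace the short exact sequence by the four-term exact sequence $0 \to K^\bullet \to M^\bullet \to \Sigma^a M^\bullet \to \iterDelta_a(M^\bullet) \to 0$, where $K^i = \ker(M^i \to \Sigma^a M^i)$. Each $K^i$ is a torsion submodule of $M^i$, hence contained in $\Gamma_{\fm}(M^i)$, which has finite degree by Theorem~\ref{thm:local-cohomology}(2); so $\rH^i(K^\bullet)$ vanishes in high enough degree. Splitting the four-term sequence through the image $L^\bullet=\im(M^\bullet\to\Sigma^a M^\bullet)$ into two short exact sequences of complexes and running your long-exact-sequence squeeze twice then yields the claim, using as you do that $\Sigma^a$ is exact and does not increase degree. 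Note that even after this repair your route differs from the paper's, which instead shifts $M^\bullet$ by Theorem~\ref{thm:shift-theorem} until it is an exact complex of semi-induced modules, uses that semi-induced modules are $\iterDelta_a$-acyclic, and then uses that $\iterDelta_a$ commutes with $\Sigma^n$; that argument never has to confront the kernel of $\id\to\Sigma^a$ at all, whereas yours, once fixed, is more elementary but leans on the finiteness of $h^0$.
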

\begin{proof} Choose $n$ large enough so that $\Sigma^n M^{\bullet}$ is an exact complex of semi-induced modules. Since semi-induced modules are acyclic with respect to $\iterDelta_a$ (\cite[Corollary~4.5]{castelnuovo-regularity}), we see that $\iterDelta_a \Sigma^n M^{\bullet}$ is exact. The result follows because  $\iterDelta_a$ commutes with $\Sigma^n$ (\cite[Proposition~1.4]{djament-vespa}).
\end{proof}


\begin{lemma}
Let $M^{\bullet}$ be a bounded complex of $\FI$-modules presented in finite degrees such that $\rH^i(M^{\bullet})$ is exact in high enough degree. Then there exists a double complex $I^{\bullet, \bullet}$  with the following properties: \begin{enumerate}[label=(\alph*)]
\item The first column $I^{0, \bullet}$ is  $M^{\bullet}$ and $I^{i,\bullet} = 0$ for $i<0$.
\item\label{part:rowsexact} The columns $I^{i, \bullet}$ are exact for $i > 0$.
\item $I^{i,j}$ is semi-induced if $i > 0$ and $\delta(I^{i,j}) \le \delta(M^j) - i +1$.
\item \label{part:colsalmostexact} Each row $I^{\bullet, j}$ is exact in high enough degree.
\end{enumerate}
Note that Part~\ref{part:colsalmostexact} implies $\rH^i(I^{\bullet, j})=\rH^i_{\fm}(M^j)$ by Theorem~\ref{thm:local-cohomology}.
\end{lemma}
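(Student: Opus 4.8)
The plan is to construct $I^{\bullet,\bullet}$ by running the inductive argument in the proof of Theorem~\ref{thm:local-cohomology}, but for the whole complex $M^\bullet$ at once, choosing all shifts uniformly across the finitely many columns. I would induct on $D := \max_j \delta(M^j) \ge -1$. In the base case $D = -1$ every $M^j$ is torsion, i.e.\ vanishes in high degrees, and I set $I^{0,\bullet} = M^\bullet$ and $I^{i,\bullet} = 0$ for $i \ne 0$. Then (a) and (b) are clear; (d) holds because a module vanishing in high degrees, viewed as a complex concentrated in cohomological degree $0$, is exact in high degrees and equals its own $\rH^0_\fm$ with $\rH^i_\fm = 0$ for $i > 0$ (Theorem~\ref{thm:local-cohomology}(4) applied to a torsion module); and (c) holds with the convention explained below.

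For the inductive step, the crucial choice is a single shift $\Sigma^n$ that \emph{simultaneously} (i) makes $\Sigma^n M^j$ semi-induced for every $j$ — possible by Theorem~\ref{thm:shift-theorem}, as $M^\bullet$ is bounded — and (ii) annihilates $\rH^p(M^\bullet)$ for every $p$. Requirement (ii) is legitimate because each $\rH^p(M^\bullet)$ is presented in finite degrees (Theorem~\ref{thm:homology-acyclics}\ref{part:abeliancategory}) and vanishes in high degrees by hypothesis, hence is bounded, so a large enough shift kills it. For such an $n$ the complex $\Sigma^n M^\bullet$ is exact; I put $I^{1,\bullet} := \Sigma^n M^\bullet$ with horizontal differential the natural map $M^\bullet \to \Sigma^n M^\bullet$, and let $\overline{M}^\bullet := \iterDelta_n(M^\bullet)$ be the cokernel complex, so that $0 \to M^\bullet \to \Sigma^n M^\bullet \to \overline{M}^\bullet \to 0$ is exact. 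Since the middle term is exact, the long exact sequence gives $\rH^p(\overline{M}^\bullet) \cong \rH^{p+1}(M^\bullet)$; hence $\overline{M}^\bullet$ is bounded, presented in finite degrees (cokernels of maps between such modules are such), and has homology vanishing in high degrees, and Proposition~\ref{prop:delta}\ref{part:iterDelta} gives $\delta(\overline{M}^j) = \max(\delta(M^j) - 1, -1)$, so $\max_j \delta(\overline{M}^j) < D$. The inductive hypothesis then supplies a double complex $J^{\bullet,\bullet}$ for $\overline{M}^\bullet$ with properties (a)--(d), which I splice in by setting $I^{i,j} := J^{i-1,j}$ for $i \ge 2$ and taking $I^{1,j} \to I^{2,j}$ to be the composite $\Sigma^n M^j \to \overline{M}^j = J^{0,j} \to J^{1,j}$ (the quotient followed by the leading differential of $J^{\bullet,j}$).

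What remains is to verify that this is a genuine double complex with properties (a)--(d); I expect this bookkeeping — especially the verification of (d) after splicing — to be the most laborious (though not deep) part. The bicomplex identities hold because every differential is built functorially from the differential of $M^\bullet$ (with signs inserted in the standard way), and two consecutive horizontal differentials compose to zero since the composite factors through a module followed by its own cokernel quotient. Then (a) and (b) are immediate: $\Sigma^n M^\bullet$ is exact by construction and the higher columns are columns of $J$. Property (c) follows from $\delta(\Sigma^n M^j) = \delta(M^j)$ together with the inductive bound $\delta(J^{i-1,j}) \le \delta(\overline{M}^j) - i + 2 \le \delta(M^j) - i + 1$, read with the convention that it is vacuous once the right-hand side drops below $-1$ — harmless, because exactly those $I^{i,j}$ are the zero modules padding out the shorter columns. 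For (d), exactness of a row $I^{\bullet,j}$ in high degrees follows stage by stage from the facts that $\ker(N \to \Sigma^m N) = \Gamma_\fm(N)$ and that $\ker(\overline{M}^j \to J^{1,j}) = \rH^0(J^{\bullet,j}) = \Gamma_\fm(\overline{M}^j)$, both of which are bounded; combined with the semi-inducedness in (c), Theorem~\ref{thm:local-cohomology}(1) then identifies $\rH^i(I^{\bullet,j})$ with $\rH^i_\fm(M^j)$, yielding the Note. The single new ingredient beyond the proof of Theorem~\ref{thm:local-cohomology} is requirement (ii) on the shift, which is what promotes the higher columns from merely semi-induced to exact.
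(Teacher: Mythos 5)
Your construction is the paper's: induct on $\max_j\delta(M^j)$, choose one shift $\Sigma^n$ making $\Sigma^n M^\bullet$ an exact complex of semi-induced modules, set $I^{1,\bullet}=\Sigma^n M^\bullet$, pass to the cokernel $\iterDelta_n M^\bullet$, and splice in the double complex supplied by induction. The bookkeeping for (a)--(d) is as you describe.

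There is one inaccurate step. You assert that $0\to M^\bullet\to\Sigma^n M^\bullet\to\overline{M}^\bullet\to 0$ is exact and read off $\rH^p(\overline{M}^\bullet)\iso\rH^{p+1}(M^\bullet)$ from the long exact sequence. But the natural map $M\to\Sigma^n M$ is not injective in general: its kernel consists of (certain) torsion elements, and nothing in the hypotheses forces the $M^j$ to be torsion-free --- in the intended application (Proposition~\ref{prop:total-degree-ker-coker}) they certainly need not be. So the claimed isomorphism fails as stated. The conclusion you actually need --- that $\rH^p(\overline{M}^\bullet)$ vanishes in high degrees, so the inductive hypothesis applies --- is still true, and the paper isolates it as the preceding lemma, proved without any injectivity: shift far enough that $\Sigma^m M^\bullet$ is an exact complex of semi-induced modules, use that semi-induced modules are $\iterDelta_n$-acyclic \cite[Corollary~4.5]{castelnuovo-regularity} to conclude $\iterDelta_n\Sigma^m M^\bullet$ is exact, and then commute $\iterDelta_n$ past $\Sigma^m$. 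Alternatively you can repair your own route: the kernel of $M^j\to\Sigma^n M^j$ lies in $\Gamma_{\fm}(M^j)$, which has degree $h^0(M^j)<\infty$, so splitting the four-term sequence into two short exact sequences shows $\rH^p(\overline{M}^\bullet)$ agrees with $\rH^{p+1}(M^\bullet)$ in high degrees, which suffices. Either patch is routine, but as written this step is wrong. (Relatedly, your identity $\ker(N\to\Sigma^m N)=\Gamma_{\fm}(N)$ in the verification of (d) also holds only for $m$ large relative to $h^0(N)$, not for all $m$; the same boundedness of $\Gamma_{\fm}(N)$ fixes this.)
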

\begin{proof} To build such a double complex, we proceed by induction on $d \coloneq \max_j \delta(M^j)$. If $d = -1$, then we can just take $I^{i, j} = 0$ for $i > 0$. Now suppose $d > -1$. Pick an $n$ large enough such that $\Sigma^n M^{\bullet}$ is an exact complex of semi-induced modules (see Theorem~\ref{thm:shift-theorem}). Set $I^{0,\bullet} =  M^{\bullet}$ and $I^{1,\bullet } = \Sigma^n M^{\bullet}$. The cokernel of the map $I^{0,\bullet} \to I^{1,\bullet }$ is $\iterDelta_n M^{\bullet}$. By the previous lemma, we see that $\rH^i(\iterDelta_n M^{\bullet})$ is torsion for each $i$. Thus, by induction on $d$, the theorem holds for the complex $\iterDelta_n M^{\bullet}$. Let $J^{i, j}$ be the corresponding double complex for $\iterDelta_n M^{\bullet}$. Set $I^{i+1, \bullet} = J^{i,\bullet}$ for $i >0$. It is now easy to check that $I^{\bullet, \bullet}$ has all the required properties. 
\end{proof}

\begin{proof}[Proof of Proposition~\ref{prop:total-degree-ker-coker}]Both Parts \ref{part:deltaker} and \ref{part:deltacoker} are special cases of Proposition~\ref{prop:delta}\ref{part:subquotient}. We now prove Parts \ref{part:hmaxker} and \ref{part:hmaxcoker}. For the ease of notation, denote the complex $0 \to \ker f \to A \to B \to \coker f \to 0$ by \[M^{\bullet} \colon 0 \to M^0 \to M^1 \to M^2 \to M^3 \to 0.\] Let $I^{\bullet, \bullet}$ be the complex as in the previous lemma. Since all the columns are exact, the spectral sequence corresponding to this double complex converges to 0. The first page of this spectral sequence is given by $E_1^{p,q} = \rH^p_{\fm}(M^q)$. Since this spectral sequence converges to zero, we see that the following must hold: \[h^i(M^0) \le \max_{j \le i } h^j(M^{i-j + 1}). \] In particular, $h^0(\ker f) \le h^0(A)$ and $h^1(\ker f) \le \max(h^1(A), h^0(B) )$. By Part~\ref{part:deltaker} and Theorem~\ref{thm:local-cohomology}, we see that $h^i(\ker f) \le 2 \delta(A) - 2(i-1)$ for $i \ge 2$. This proves Part~\ref{part:hmaxker}. Again, since $E_1^{p,q}$ converges to $0$, we observe that the following must hold:  \[h^i(M^3) \le \max_{j \ge i} h^j(M^{2 + i - j}).\] Thus for each $i$, we have \[h^i(\coker f) \le \max(h^i(B), h^{i+1}(A), h^{i+2}(\ker f)) \le \max(h^{\max}(B), h^{\max}(A), 2 \delta(A) - 2). \] This finishes the proof of Part~\ref{part:hmaxcoker}, and we are done.
\end{proof}

\section{\TypeA{} spectral sequence arguments and configuration spaces}
\label{secA}

In this section, we prove Theorem \ref{theorem:non-quillen} which establishes linear stable ranges in \TypeA{} spectral sequence arguments. We use this to prove our results on configuration spaces, Application \ref{ap:config}.

\subsection{The \TypeA{} setup} By a \TypeA{} setup, we mean a first quadrant spectral sequence $E_r^{p,q}$ of $\FI$-modules such that for some page $d$ we have bounds on $t_{0}(E_d^{p,q})$ and $t_{1}(E_d^{p,q})$ depending on $p$ and $q$. 


%

Theorem \ref{theorem:non-quillen} follows via Proposition~\ref{prop:tot-degree1} from the following proposition. Note that our spectral sequences are cohomologically indexed; Theorem~\ref{theorem:non-quillen} applies equally well to homologically indexed spectral sequences, but the precise bounds in Proposition~\ref{prop:non-quillen} would be slightly different.
\begin{proposition}
\label{prop:non-quillen}

Let $E_r^{p,q}$ be a cohomologically graded first quadrant spectral sequence of $\FI$-modules converging to $M^{p+q}$. Suppose that for some page $d$, the $\FI$-modules $E_d^{p,q}$ are presented in finite degrees, and set $D_{k}=\max_{p+q=k}\delta(E_d^{p,q})$ and $\eta_k=\max_{p+q=k}h^{\max}(E_d^{p,q})$.
Then we have the following:  \begin{enumerate}
\item $\delta(M^k) \le D_k$ 
\item $h^{\max}(M^k) \le \max\big(\,\max_{\ell \leq k+s-d} \eta_\ell,\ \ \max_{\ell \le 2k  -d +1} (2D_\ell-2)\,\big)$
\end{enumerate} where $s = \max(k+2, d)$.

\end{proposition}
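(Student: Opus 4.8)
The plan is to run the double-complex/spectral-sequence machinery of Proposition~\ref{prop:total-degree-ker-coker} one page at a time, tracking how $\delta$ and $h^{\max}$ of the $E_r^{p,q}$ evolve. First I would set up the notation: for each page $r \geq d$, put $D_k^{(r)} = \max_{p+q=k}\delta(E_r^{p,q})$ and $\eta_k^{(r)} = \max_{p+q=k}h^{\max}(E_r^{p,q})$. The goal is to bound $D_k^{(\infty)}$ and $\eta_k^{(\infty)}$, and then deduce the statement for $M^k$ from the fact that $M^k$ admits a finite filtration whose associated graded pieces are the $E_\infty^{p,q}$ with $p+q=k$, using Proposition~\ref{prop:filtration} (so $\delta(M^k) = \max_{p+q=k}\delta(E_\infty^{p,q})$ and $h^{\max}(M^k)\leq \max_{p+q=k}h^{\max}(E_\infty^{p,q})$).

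Next I would analyze the passage from page $r$ to page $r+1$. Fix $(p,q)$ with $p+q=k$. The module $E_{r+1}^{p,q}$ is the homology of the three-term complex $E_r^{p-r,q+r-1}\to E_r^{p,q}\to E_r^{p+r,q-r+1}$, i.e. $\ker/\im$ of two maps of $\FI$-modules presented in finite degrees. For $\delta$: since $E_{r+1}^{p,q}$ is a subquotient of $E_r^{p,q}$, Proposition~\ref{prop:delta}\ref{part:subquotient} immediately gives $\delta(E_{r+1}^{p,q})\leq \delta(E_r^{p,q})$, hence $D_k^{(r+1)}\leq D_k^{(r)}\leq\cdots\leq D_k$, proving part (1). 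For $h^{\max}$: write $Z = \ker(E_r^{p,q}\to E_r^{p+r,q-r+1})$ and $B = \im(E_r^{p-r,q+r-1}\to E_r^{p,q})\subseteq Z$, so $E_{r+1}^{p,q} = Z/B$. Applying Proposition~\ref{prop:total-degree-ker-coker}\ref{part:hmaxker} to the map $E_r^{p,q}\to E_r^{p+r,q-r+1}$ bounds $h^{\max}(Z)$ in terms of $2\delta(E_r^{p,q})-2$, $h^{\max}(E_r^{p,q})$, and $h^{\max}(E_r^{p+r,q-r+1})$; since $B$ is a quotient of $E_r^{p-r,q+r-1}$, Proposition~\ref{prop:total-degree-ker-coker}\ref{part:hmaxcoker} (or just Proposition~\ref{prop:delta} plus Theorem~\ref{thm:local-cohomology}) bounds $h^{\max}(B)$; then the long exact sequence in local cohomology for $0\to B\to Z\to E_{r+1}^{p,q}\to 0$ (as in Proposition~\ref{prop:filtration}) bounds $h^{\max}(E_{r+1}^{p,q})$ by the max of the two. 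Keeping track of which bidegrees $(p\pm r, q\mp r+1)$ contribute --- they have total degree $k-1$ and $k+1$ --- gives a recursion of the shape $\eta_k^{(r+1)} \leq \max\big(\eta_{k-1}^{(r)},\eta_k^{(r)},\eta_{k+1}^{(r)},\ 2D_{k-1}^{(r)}-2,\ 2D_k^{(r)}-2\big)$, where $D^{(r)}_\ell \le D_\ell$.

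Then I would iterate this recursion. Because the spectral sequence is first-quadrant, $E_r^{p,q} = E_\infty^{p,q}$ once $r$ is large enough — concretely $r > s := \max(k+2,d)$ suffices to stabilize all bidegrees with $p+q=k$ (and $p+q=k\pm1$, which we also need), so only finitely many steps of the recursion are relevant. Unwinding, $\eta_k^{(\infty)}$ is bounded by a max over $\eta_\ell = \eta_\ell^{(d)}$ for $\ell$ in a window around $k$ that grows by one on each side per page, reaching down to roughly $\ell\leq k+s-d$; and by a max over $2D_\ell - 2$ terms, where each application of the "$2\delta(A)-2$" term in Proposition~\ref{prop:total-degree-ker-coker}\ref{part:hmaxker}/\ref{part:hmaxcoker} can itself get pushed to a higher total degree and then re-amplified on subsequent pages, which is what produces the $\ell\leq 2k-d+1$ bound and the doubling. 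I would verify the index bookkeeping by induction on the page number: at page $r$, the $\eta$-contributions reach degree $\leq k + (r-d)$ and the $D$-contributions reach degree $\leq$ (something like) $2k - d + 1$; since $r-d\leq s-d$ at termination, this gives the stated bounds. Finally I'd pass from $E_\infty$ to $M^k$ via Proposition~\ref{prop:filtration} as above.

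The main obstacle I anticipate is the careful index bookkeeping in the iteration — precisely pinning down why the $\eta$-window only reaches $k+s-d$ while the $D$-window blows up to $2k-d+1$, and making sure the "off-by-one" shifts from $E_r^{p+r,q-r+1}$ (total degree $k+1$) versus $E_r^{p-r,q+r-1}$ (total degree $k-1$) are accounted for correctly across all pages, including the interaction with the first-quadrant vanishing that makes the process terminate. The local-cohomology estimates themselves are routine given Proposition~\ref{prop:total-degree-ker-coker} and Proposition~\ref{prop:filtration}; it is combining them over $O(k)$ pages and extracting clean closed-form bounds that requires care.
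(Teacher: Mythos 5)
Your proposal is correct and follows essentially the same route as the paper: track $\delta$ and $h^{\max}$ page by page via Proposition~\ref{prop:total-degree-ker-coker}, iterate the resulting recursion until the first-quadrant spectral sequence stabilizes at page $s=\max(k+2,d)$, and pass to $M^k$ by Proposition~\ref{prop:filtration}. The only (cosmetic) difference is that the paper works with $V_r^k=\bigoplus_{p+q=k}E_r^{p,q}$ and writes $V_{r+1}^k=\coker\bigl(V_r^{k-1}\to\ker(V_r^k\to V_r^{k+1})\bigr)$, which lets it apply parts \ref{part:hmaxker} and \ref{part:hmaxcoker} of Proposition~\ref{prop:total-degree-ker-coker} directly and yields exactly the recursion you wrote down.
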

\begin{proof} For all $r$, set $V_r^k=\bigoplus_{p+q=k} E_r^{p,q}$. Note that $D_k=\delta(V_d^k)$ and $\eta_k=h^{\max}(V_d^k)$. Since $M^k$ has a filtration whose associated graded is $V_\infty^k$, Proposition~\ref{prop:filtration} tells us that $\delta(M^k)=\delta(V_\infty^k)$ and $h^{\max}(M^k)\leq h^{\max}(V_\infty^k)$.

By definition, $V_{r+1}^k=\coker(V_r^{k-1}\to \ker(V_r^k\to V_r^{k+1}))$. Applying Proposition~\ref{prop:total-degree-ker-coker} shows $\delta(V_{r+1}^k)\leq \delta(V_r^k)$ and \[h^{\max}(V_{r+1}^k)\leq \max(2\delta(V_r^{k-1})-2,
\ 2\delta(V_r^k)-2,
\ h^{\max}(V_r^{k-1}),
\ h^{\max}(V_r^{k}),
\ h^{\max}(V_r^{k+1}) ).\]
It follows by induction  for all $r \geq d$ that $\delta(V_r^k)\leq D_k$ and \[h^{\max}(V_r^k)\leq 
\max\big(\,\max_{\ell \leq k+r-d} \eta_\ell,\ \ 
\max_{\ell \le k+r-d-1} (2D_\ell-2)\,\big).\]
Since $V_\infty^k=V_{\max(k+2, d)}^k$, we find that 
$\delta(V_\infty^k)\leq D_k$ and 
\[h^{\max}(V_\infty^k)\leq 
\max\big(\,\max_{\ell \leq k+s-d} \eta_\ell,\ \ 
\max_{\ell \le 2k-d+1} (2D_\ell-2)\,\big),\]
as desired.\end{proof}

Recall that if an $\FI$-module $V$ is semi-induced then $h^{\max}(V) = -1$. Hence we obtain the following corollary by using Proposition~\ref{prop:non-quillen} to bound $\delta(M^k)$ and $h^{\max}(M^k)$, then applying Proposition~\ref{prop:tot-degree1}\ref{bound:t0} and \ref{bound:t1}.
\begin{corollary}
\label{cor:FIsharpTypeA}
Let $E_r^{p,q}$ be a cohomologically graded first quadrant spectral sequence of $\FI$-modules converging to $M^{p+q}$. Suppose that for some page $d$, the $\FI$-modules $E_d^{p,q}$ are semi-induced and generated in degree $\leq \mu(p+q)$ for some $\mu$. Then we have \begin{enumerate}
\item $\delta(M^k) \le \mu k$.
\item $h^{\max}(M^k) \le \max(-1, 4\mu k-2\mu(d-1)-2)$.
\item $t_0(M^k) \leq \max(\mu k,  5 \mu k-2\mu(d-1) - 1)$.
\item $t_1(M^k) \leq \max(\mu k,  9\mu k - 4 \mu(d-1) - 2)$.
\end{enumerate} 
\end{corollary}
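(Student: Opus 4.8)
The plan is to derive this directly from Proposition~\ref{prop:non-quillen} and Proposition~\ref{prop:tot-degree1}: use the former to bound $\delta(M^k)$ and $h^{\max}(M^k)$, and then feed those bounds into parts~\ref{bound:t0} and \ref{bound:t1} of the latter to get the bounds on $t_0(M^k)$ and $t_1(M^k)$. I do not expect a genuine obstacle; the only care needed is in tracking the arithmetic and in checking that the degenerate small-degree cases are absorbed by the maxima that already appear in the statement.

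First I would translate the hypotheses into the notation of Proposition~\ref{prop:non-quillen}. Each $E_d^{p,q}$ is semi-induced, hence presented in finite degrees (semi-induced modules are $\FI$-homology acyclic by Theorem~\ref{thm:homology-acyclics}, so $t_1=-1$), so that proposition applies. By Proposition~\ref{prop:delta}\ref{part:deltat0semi} we have $\delta(E_d^{p,q})=t_0(E_d^{p,q})\le\mu(p+q)$, and by Corollary~\ref{cor:hmax} a semi-induced module has $\rR\Gamma_{\fm}=0$, so $h^{\max}(E_d^{p,q})=-1$. Thus in the notation of Proposition~\ref{prop:non-quillen} we get $D_k\le\mu k$ and $\eta_k=-1$ for all $k\ge 0$.

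Next I would read off parts~(1) and~(2) of the corollary. Part~(1) of Proposition~\ref{prop:non-quillen} immediately gives $\delta(M^k)\le D_k\le\mu k$. For part~(2): the term $\max_{\ell\le k+s-d}\eta_\ell$ equals $-1$ (the index set is nonempty since $s=\max(k+2,d)\ge d$ forces $k+s-d\ge k\ge 0$), while for $\ell\le 2k-d+1$ we have $D_\ell\le\mu\ell\le\mu(2k-d+1)$ and hence $2D_\ell-2\le 4\mu k-2\mu(d-1)-2$; combining yields $h^{\max}(M^k)\le\max(-1,\,4\mu k-2\mu(d-1)-2)$. (If $2k-d+1<0$ the second index set is empty, and the bound still holds since trivially $h^{\max}(M^k)\ge -1$.)

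Finally, $M^k$ is presented in finite degrees, since it carries a finite filtration whose graded pieces are the subquotients $E_\infty^{p,q}$ of the $E_d^{p,q}$ (using Theorem~\ref{thm:homology-acyclics}\ref{part:abeliancategory}), so Proposition~\ref{prop:tot-degree1} applies. Substituting $\delta(M^k)\le\mu k$ and $h^{\max}(M^k)\le\max(-1,\,4\mu k-2\mu(d-1)-2)$ into Proposition~\ref{prop:tot-degree1}\ref{bound:t0} and \ref{bound:t1} gives
\[ t_0(M^k)\le\mu k+\max(-1,\,4\mu k-2\mu(d-1)-2)+1=\max(\mu k,\,5\mu k-2\mu(d-1)-1) \]
and
\[ t_1(M^k)\le\mu k+2\max(-1,\,4\mu k-2\mu(d-1)-2)+2=\max(\mu k,\,9\mu k-4\mu(d-1)-2), \]
which are parts~(3) and~(4). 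The only ``hard part'' here is clerical: making sure the empty-index-set conventions and the $+1$, $+2$ shifts are handled so that the bounds collapse to exactly those in the statement.
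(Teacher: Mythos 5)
Your proposal is correct and follows exactly the paper's own route: observe that semi-induced gives $D_k\le\mu k$ and $\eta_k=-1$, apply Proposition~\ref{prop:non-quillen} for parts (1)--(2), and then Proposition~\ref{prop:tot-degree1}\ref{bound:t0}--\ref{bound:t1} for parts (3)--(4). The arithmetic checks out, and your extra care about finite presentation and empty index sets is fine (the paper leaves those points implicit).
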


\subsection{Cohomology of configuration spaces}
Let $\cM$ be a connected manifold of dimension $d \geq 2$. Let $A$ be any abelian group. In this section, we prove a linear bound on the generation and presentation degrees of the $\FI$-modules $\rH^k(\PConf(\cM); A)$ described in \S \ref{sec:intro}. The following theorem (in conjunction with Proposition~\ref{prop:equalizer} and Proposition~\ref{prop:over-fields}) includes Application \ref{ap:config} as a special case.

\begin{theorem} \label{thm:config}
Let $\cM$ be a connected manifold of dimension  $d \ge 2$, and set: \begin{align*}
\mu= \begin{cases} 2 &\mbox{if } d = 2 \\
1 &\mbox{if } d \geq 3
\end{cases}\qquad\qquad\lambda = \begin{cases} 0 &\mbox{if $M$ is non-orientable} \\
1 &\mbox{if $M$ is orientable}
\end{cases}
\end{align*} 

Let $A$ be an abelian group. Then we have: \begin{enumerate}
\item $\delta(\rH^k(\PConf(\cM);A)) \le \mu k$.
\item $h^{\max}(\rH^k(\PConf(\cM);A)) \le \max( -1, 4 \mu k-2\mu\lambda-2)$.
\item $t_0(\rH^k(\PConf(\cM);A)) \leq \max(\mu k, 5 \mu k-2\mu\lambda- 1)$.
\item $t_1(\rH^k(\PConf(\cM);A)) \leq max(\mu k, 9\mu k - 4 \mu\lambda - 2)$.
\end{enumerate} 
\end{theorem}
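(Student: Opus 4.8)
The plan is to deduce Theorem~\ref{thm:config} from Corollary~\ref{cor:FIsharpTypeA} by exhibiting a \TypeA{} spectral sequence computing $\rH^*(\PConf(\cM);A)$ whose $E_d$-page (for a suitable $d$) consists of semi-induced $\FI$-modules generated in a degree linear in $p+q$. The natural candidate is the Leray--Serre or Leray spectral sequence attached to the inclusion $\PConf(\cM)\hookrightarrow \cM^{\bullet}$ (the Fadell--Neuwirth/Cohen--Taylor spectral sequence), which has an $E_2$-page built from $\rH^*(\cM;A)$ tensored with the cohomology of the ``diagonal arrangement'' — concretely, the $\FI$-module structure on each $E_2^{p,q}$ is of the form $\cI(V)$ for an explicit $\FB$-module $V$ (this is exactly the phenomenon behind Church--Ellenberg--Farb and the $\FI\sharp$-structure on configuration space cohomology). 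So the first step is to recall this spectral sequence and verify that $E_2^{p,q}$ is induced (hence semi-induced) with $E_2^{p,q}=0$ unless $q$ is a multiple of $d-1$, and to compute the generation degree of $E_2^{p,q}$ as an $\FI$-module in terms of $p$ and $q$.

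Next I would carry out the bookkeeping on generation degrees. In the unoriented case the classes contributing to $E_2^{p,q}$ in total degree $k=p+q$ are spanned by products of $p$ ``manifold classes'' and roughly $q/(d-1)$ ``diagonal classes'', each diagonal class merging two points; tracking how many labels are involved gives a bound $\mu(p+q)=\mu k$ on the generation degree with $\mu=2$ when $d=2$ and $\mu=1$ when $d\geq 3$ (the improvement for $d\geq 3$ comes from the diagonal classes living in degree $d-1\geq 2$, so fewer of them fit in a fixed total degree). When $\cM$ is orientable there is a Poincaré-duality-type vanishing that lets one take the relevant page to be $d=\lambda+1=2$ rather than $d=1$, which is where the $-2\mu\lambda$ correction in parts (2)--(4) enters; so the second step is to record that $E_2^{p,q}$ is semi-induced with generation degree $\le\mu(p+q)$ and that in the orientable case we may start from page $2$ (taking $d=2$ in Corollary~\ref{cor:FIsharpTypeA}), while in the non-orientable case we take $d=1$ (or phrase things uniformly with $d=\lambda+1$). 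Then parts (1)--(4) of the theorem are exactly parts (1)--(4) of Corollary~\ref{cor:FIsharpTypeA} with this $\mu$ and with $d-1$ replaced by $\lambda$.

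One subtlety to address carefully: Corollary~\ref{cor:FIsharpTypeA} requires the spectral sequence to converge to $M^{p+q}=\rH^k(\PConf(\cM);A)$ \emph{as $\FI$-modules}, and requires the $E_d$-page entries to be genuinely semi-induced $\FI$-modules (not merely that each $E_d^{p,q}$ is abstractly induced for fixed $p,q$ — the differentials and the $\FI$-structure must be compatible). I would handle this by invoking the known $\FI$-equivariant (indeed $\FI\sharp$-equivariant) structure on the Cohen--Taylor/Leray spectral sequence from the literature on configuration spaces, citing the relevant source, rather than reproving it. The main obstacle, and the part requiring genuine care rather than citation, is the explicit combinatorial bound on the generation degree of $E_2^{p,q}$ as an $\FI$-module and the orientability-dependent vanishing that permits starting from a later page with the sharper constant; everything after that is a mechanical substitution into Corollary~\ref{cor:FIsharpTypeA}. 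I should also note the hypothesis $\dim\cM\geq 2$ is used to ensure $\PConf$ is connected-enough and that the diagonal classes sit in positive degree, and that no finiteness hypothesis on $\cM$ is needed for parts (1)--(4) since those are statements about degrees of $\FI$-homology, not about Betti numbers.
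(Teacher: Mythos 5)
Your plan for the orientable case matches the paper exactly: cite Totaro's identification of the Leray spectral sequence of $\PConf(\cM)\hookrightarrow \cM^{\bullet}$ as a spectral sequence of $\FI$-modules with induced $E_2$-page generated in degree $\le \mu(p+q)$, and feed it into Corollary~\ref{cor:FIsharpTypeA} with $d=2$. That is precisely how the paper obtains the $\lambda=1$ constants.

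The non-orientable case is where you have a genuine gap. You propose to use the \emph{same} Leray/Cohen--Taylor spectral sequence but ``take $d=1$'' when $\cM$ is non-orientable. This does not make sense: the Leray spectral sequence has no meaningful $E_1$-page of the required form, and the real obstruction in the non-orientable case is not the choice of starting page but that Totaro's description of the $E_2$-page as an induced $\FI$-module (with the stated generation degree) is only available for orientable $\cM$ --- the fiberwise cohomology classes attached to the diagonals carry orientation twists, so for non-orientable $\cM$ with arbitrary coefficients $A$ one does not know that $E_2^{p,q}$ is semi-induced. There is also no ``Poincar\'e-duality-type vanishing'' that moves you from page $1$ to page $2$; the parameter $\lambda$ records which of two \emph{different} spectral sequences is being used. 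The paper's resolution is to invoke a second, unrelated spectral sequence due to Miller--Wilson (an arc-resolution--type construction, Theorem~\ref{thm:MWss}) which works for any connected $\cM$ of dimension $\ge 2$ and any coefficients, and whose \emph{$E_1$}-page is induced with $t_0(E_1^{p,q})\le \mu q\le\mu(p+q)$; applying Corollary~\ref{cor:FIsharpTypeA} with $d=1$ to that spectral sequence gives the $\lambda=0$ constants. Without citing some such substitute input, your argument only proves the theorem for orientable manifolds. (Separately, beware that you are using $d$ both for $\dim\cM$ and for the page index in Corollary~\ref{cor:FIsharpTypeA}; the claim ``$E_2^{p,q}=0$ unless $q$ is a multiple of $d-1$'' refers to the former, the substitution $d-1=\lambda$ to the latter.)
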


This follows immediately from Corollary~\ref{cor:FIsharpTypeA}, in light of the following two results.
The first is due to Miller and Wilson; it follows from the proof of \cite[Theorem A.12]{MW}. The second is due to Totaro \cite{Tot}. 
\begin{theorem}[Miller--Wilson] \label{thm:MWss}
There is a first quadrant spectral sequence $E^{p,q}_r$ of $\FI$-modules converging to $\rH^{p+q}(\PConf(\cM), A)$  such that $E^{p,q}_1$ is induced and $t_0(E^{p,q}_1) \leq \mu q$. 
\end{theorem}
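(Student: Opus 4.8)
The plan is to realize $E_r^{p,q}$ as a page-one form of the Leray--Cohen--Taylor spectral sequence of the open inclusions $j_S\colon \PConf_S(\cM)\hookrightarrow \cM^S$, and to extract the two claimed properties from the explicit description of its first page.

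\emph{The spectral sequence of $\FI$-modules.} For an injection $\iota\colon S\hookrightarrow T$, restricting an embedding $T\hookrightarrow\cM$ along $\iota$ gives a map $\PConf_T(\cM)\to\PConf_S(\cM)$; this fits into a commuting square with the coordinate projection $\cM^T\to\cM^S$ and the inclusions $j_S$, $j_T$, and $\cM^T\to\cM^S$ is a fiber bundle. Hence the Leray spectral sequence of $j_S$ is natural in $S\in\FI^{\op}$; to obtain a spectral sequence that already starts at page one one instead uses the Cohen--Taylor cochain-level model, which is functorial for open embeddings of $\cM$ and carries a natural first-quadrant spectral sequence. Applying $\rH^\ast(-;A)$ to the whole $\FI^{\op}$-diagram turns each page $E_r^{p,q}$ into an $\FI$-module and produces a first-quadrant cohomologically graded spectral sequence of $\FI$-modules converging to $\rH^{p+q}(\PConf(\cM);A)$.

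\emph{Inducedness of $E_1$.} By the Cohen--Taylor/Totaro description of the first page \cite{Tot} (in the form valid with arbitrary coefficients), $E_1^{p,q}(S)$ has an additive basis indexed by ``decorated graphs'' on the vertex set $S$: a graph whose edges record collisions of points, together with cohomology classes of $\cM$ placed on the vertices, where the fiber degree $q$ is accounted for by the edges (each linking class $G_{ij}$ sitting in cohomological degree $d-1$) and $p$ by the vertex decorations. Every such basis element has a well-defined support $U\subseteq S$, namely the set of vertices it involves nontrivially, and an injection $S\hookrightarrow T$ carries it to the element with the same decorated graph on the relabelled support together with trivial decorations on $T\setminus S$. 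Grouping basis elements by their support therefore identifies $E_1^{p,q}$ with $\cI(V^{p,q})$, where $V^{p,q}$ is the $\FB$-module whose value on $[m]$ is the span of the bidegree-$(p,q)$ elements supported on all of $[m]$; in particular $E_1^{p,q}$ is induced, and $t_0(E_1^{p,q})=\deg V^{p,q}$ is the largest possible size of a support.

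\emph{The bound $t_0(E_1^{p,q})\le\mu q$, and the main obstacle.} A bidegree-$(p,q)$ basis element uses at most $q/(d-1)$ edges, so its underlying graph touches at most $2q/(d-1)$ vertices; since $2/(d-1)=\mu$ when $d\in\{2,3\}$ and $2/(d-1)\le 1=\mu$ when $d\ge 4$, this part of the support has size $\le \mu q$. The crux is to check that the vertex decorations contribute no further vertices to the support; this rests on the relations in Totaro's description --- in particular the diagonal relations $G_{ij}\cdot(p_i^\ast\alpha-p_j^\ast\alpha)=0$, which constrain how a cohomology class can spread across a collision cluster --- together with the grading convention of \cite{MW} that places the bottom row in fiber degree zero. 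I expect this last point to be the main difficulty: one must verify that every contribution to $E_1^{p,q}$ is genuinely supported on at most $\mu q$ points, uniformly in $d\ge 2$ and in the coefficient group, rather than merely on $\le\mu(p+q)$ points. This is exactly the computation carried out inside the proof of \cite[Theorem~A.12]{MW}, from which Theorem~\ref{thm:MWss} is extracted.
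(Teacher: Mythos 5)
The paper does not reprove this statement: its ``proof'' is the citation to the proof of \cite[Theorem~A.12]{MW}. Measured against what the statement asserts, your construction is not just incomplete at the step you flag --- it is the wrong spectral sequence. The Leray/Cohen--Taylor--Totaro spectral sequence of the inclusions $\PConf_S(\cM)\into\cM^S$ is exactly the subject of the \emph{other} result of this section, Theorem~\ref{thm:Totaross}, which the paper states separately with an orientability hypothesis and with only the weaker bound $t_0\leq\mu(p+q)$, and this is no accident. In your indexing, the row $q=0$ of the first page consists of decorated graphs with no edges, i.e.\ it is $\rH^p(\cM^{\bullet};A)$ (equivalently, $R^0 (j_S)_*A$ is the constant sheaf because the diagonals have codimension $d\geq 2$). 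For any $\cM$ with nonzero reduced cohomology in positive degrees --- a sphere or a torus, say --- this $\FI$-module has generation degree growing with $p$: an external product of positive-degree classes of $\cM$ placed on $p$ distinct coordinates lies in no proper support, so for $\cM=S^2$ one gets $t_0(E_1^{p,0})=p/2$, not $\leq\mu\cdot 0=0$. The step you call ``the main difficulty'' --- that vertex decorations contribute nothing to the support --- is therefore not merely hard, it is false for this spectral sequence: the relation $G_{ij}\cdot(p_i^*\alpha-p_j^*\alpha)=0$ only transports a decoration between the two endpoints of an edge and does nothing for vertices incident to no edge. The best your construction can give is $t_0(E_1^{p,q})\leq\mu q+p$, i.e.\ essentially Theorem~\ref{thm:Totaross}, not the bound $\mu q$ of Theorem~\ref{thm:MWss}.

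Two further points. First, the assertion that Totaro's description of the first page holds ``with arbitrary coefficients'' for arbitrary $\cM$ is not correct: identifying the page via the linking classes $G_{ij}$ uses Thom classes of the diagonals and K\"unneth splittings, which for non-orientable $\cM$ or torsion coefficients introduce orientation local systems and Tor terms. This is precisely why Theorem~\ref{thm:Totaross} assumes orientability while Theorem~\ref{thm:MWss} does not, and the latter is what the paper needs for the non-orientable case of Theorem~\ref{thm:config}. Second, deferring the crux to ``the computation carried out inside the proof of \cite[Theorem~A.12]{MW}'' is circular in this setting --- that computation is exactly the content being asserted --- and in any case it concerns a different spectral sequence, one whose auxiliary direction does not enlarge $\FI$-supports, which is what makes a generation bound depending only on $q$ possible at all. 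Since the spectral sequence you chose provably violates the bound $\mu q$ already on its bottom row, no analysis of the diagonal relations can rescue this route; and if you retreat to the bound $\mu(p+q)$ (which is all Corollary~\ref{cor:FIsharpTypeA} actually uses), you have reproved Theorem~\ref{thm:Totaross} under its orientability hypothesis, not the statement at hand.
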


\begin{theorem}[{Totaro \cite{Tot}, see \cite[Proof of Theorem~6.3.1]{fimodules}}] \label{thm:Totaross}
If $\cM$ is orientable, there is a first quadrant spectral sequence $E^{p,q}_r$ of $\FI$-modules converging to $\rH^{p+q}(\PConf(\cM); A)$  such that $E^{p,q}_2$ is induced and $t_0(E^{p,q}_2) \leq \mu (p+q)$. 
\end{theorem}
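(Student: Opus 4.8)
The final statement to prove is Theorem~\ref{thm:Totaross}, which asserts the existence of the Totaro spectral sequence for ordered configuration spaces of an orientable manifold, with $E_2$-page induced and generation degree controlled linearly.

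\medskip

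The plan is to realize this spectral sequence via the Leray spectral sequence of the inclusion $\PConf(\cM)\hookrightarrow \cM^{\bullet}$ of the ordered configuration space into the Cartesian power. First I would recall that, applied degreewise, this gives for each finite set $S$ a spectral sequence $E_2^{p,q}(S)=\rH^p\big(\cM^S;\cH^q\big)\Longrightarrow \rH^{p+q}(\PConf_S(\cM);A)$, where $\cH^q$ is the $q$-th derived pushforward sheaf. The entire construction is functorial in $S$ (for injections $S\into T$ one has restriction maps $\cM^T\to\cM^S$ covered compatibly by the configuration spaces), so this is a spectral sequence of $\FI^{\op}$-objects and hence, after taking cohomology, of $\FI$-modules, converging to the $\FI$-module $\rH^{p+q}(\PConf(\cM);A)$. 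The key input, due to Totaro, is the identification of the $E_2$-page: when $\cM$ is a connected orientable manifold of dimension $d\ge2$, the sheaf cohomology $\rH^p(\cM^S;\cH^q)$ decomposes as a sum indexed by graphs/partitions on $S$, and in each homological degree only finitely many such terms appear, each of the form (cohomology of a power of $\cM$) tensored with the orientation data along the diagonals. The upshot I would extract is the shape of $E_2^{p,q}$ as an $\FI$-module: it is a finite direct sum of modules each of which is induced (of the form $\cI(V)$), because the summands are indexed by configurations of a bounded number $r$ of ``clusters'' with the remaining points free, and freely adding points to such a configuration is exactly the induction operation; this is the content of \cite[Proof of Theorem~6.3.1]{fimodules}, which carries out precisely this bookkeeping.

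\medskip

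Next I would track the generation degree. In the decomposition of $E_2^{p,q}$, a summand contributing in bidegree $(p,q)$ is supported on configurations with at most some number $m$ of constrained points, and a count of how $p$ and $q$ accumulate along the diagonal strata shows $m$ grows no faster than $\mu(p+q)$ (for $d=2$ one picks up two units of $q$ per coincidence of two points, hence $\mu=2$; for $d\ge3$ the Euler class of the normal bundle to the big diagonal sits in degree $d-1\ge2$, improving the count so $\mu=1$ suffices). Since an induced module $\cI(V)$ with $V$ supported in degrees $\le m$ has $t_0(\cI(V))\le m$, this gives $t_0(E_2^{p,q})\le \mu(p+q)$, which is the claimed bound. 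Finiteness of $\dim_A$ issues need no attention since $A$ is just an abelian group and we only need the degree bound.

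\medskip

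I expect the main obstacle to be not any single hard step but rather the careful verification that Totaro's identification of the $E_2$-page is genuinely natural in $S$ over all of $\FI$ (not merely the symmetric groups), and that the resulting $\FI$-module structure on each graph-summand is literally induced rather than merely ``built from induceds'' — since Corollary~\ref{cor:FIsharpTypeA} requires semi-induced with a generation-degree bound, and an honest induced module is the cleanest way to certify $h^{\max}=-1$. Concretely one must check that the restriction maps along $S\into T$ send a summand indexed by a partition of $S$ into the corresponding summand for the induced partition of $T$, and that the ``new'' points of $T\setminus S$ enter freely; this is exactly the structure packaged in \cite[Proof of Theorem~6.3.1]{fimodules}, so I would cite that rather than reprove it, and the theorem then follows.
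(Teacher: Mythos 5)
Your proposal is correct and takes the same route as the paper, which does not reprove this statement but simply cites Totaro and the proof of \cite[Theorem~6.3.1]{fimodules}: the Leray spectral sequence of $\PConf(\cM)\hookrightarrow \cM^{\bullet}$, the graph/partition decomposition of the $E_2$-page into induced summands, and the degree count yielding $t_0(E^{p,q}_2)\leq \mu(p+q)$. (One cosmetic slip: for $d=2$ a coincidence of two points contributes \emph{one} unit to $q$ while constraining \emph{two} points, which is why $\mu=2$; your phrasing inverts this, but the conclusion and the $d\geq 3$ case are right.)
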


One can improve Theorem~\ref{thm:config} if the manifold admits two pointwise linearly independent vector fields. This includes all manifolds with trivial tangent bundle.  We give this example because it illustrates that sometimes one can bound $h^{\max}$ using topology instead of algebra.

\begin{proposition} \label{prop:vectorfield}
With the notation of Theorem~\ref{thm:config}, suppose that $\cM$  admits a pair of linearly independent vector fields.  Then we have: \begin{enumerate}[label=(\arabic*)]
\item\label{part:2vecD} $\delta(\rH^k(\PConf(\cM);A)) \le \mu k$.
\item\label{part:2vecH} $h^{\max}(\rH^k(\PConf(\cM);A)) \le 0$.
\item\label{part:2vect0} $t_0(\rH^k(\PConf(\cM);A)) \leq \mu k + 1$.
\item\label{part:2vect1} $t_1(\rH^k(\PConf(\cM);A)) \leq \mu k + 2$.
\end{enumerate} 
\end{proposition}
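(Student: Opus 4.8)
The plan is to sidestep the algebraic propagation of $h^{\max}$ through a spectral sequence — which is exactly what manufactures the linear-in-$k$ term $2D_\ell-2$ in Proposition~\ref{prop:non-quillen} — and instead use the two vector fields to arrange that \emph{no} differentials need to be taken at all. Concretely, I would first invoke the hypothesis to produce a first-quadrant spectral sequence of $\FI$-modules converging to $\rH^{p+q}(\PConf(\cM);A)$ whose relevant page $E^{p,q}_d$ consists of induced $\FI$-modules with $t_0(E^{p,q}_d)\le \mu q$ \emph{and which degenerates at that page}, so that $E^{p,q}_\infty = E^{p,q}_d$. A pair of pointwise linearly independent vector fields trivializes a rank-$2$ subbundle of $T\cM$, and this is the extra room one needs to upgrade the scanning spectral sequence of Miller--Wilson (Theorem~\ref{thm:MWss}), or the Cohen--Taylor/Totaro model (Theorem~\ref{thm:Totaross}): the stabilization maps out of which the spectral sequence is built can then be split, forcing all higher differentials to vanish (for parallelizable $\cM$ one expects such a degeneration to be available from standard models). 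Establishing this degeneration is the one genuinely topological input, and it is the main obstacle; everything afterward is formal.

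Given such a degenerate spectral sequence, $\rH^k(\PConf(\cM);A)$ acquires a finite filtration whose associated graded pieces are the induced $\FI$-modules $E^{p,q}_d$ with $p+q=k$, each generated in degree $\le \mu q\le \mu k$. From this I would read off parts \ref{part:2vecD} and \ref{part:2vecH} directly: by Proposition~\ref{prop:filtration}, $\delta(\rH^k(\PConf(\cM);A))=\max_{p+q=k}\delta(E^{p,q}_d)$, which equals the generation degree of each induced piece by Proposition~\ref{prop:delta}\ref{part:deltat0semi} and is therefore $\le \mu k$; similarly $h^{\max}(\rH^k(\PConf(\cM);A))\le\max_{p+q=k}h^{\max}(E^{p,q}_d)$, and each induced (hence semi-induced) $E^{p,q}_d$ has $h^{\max}=-1\le 0$. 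This already gives (1) and (2), with room to spare in (2).

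Parts \ref{part:2vect0} and \ref{part:2vect1} then follow purely formally from the first two, in the same way Theorem~\ref{theorem:non-quillen} is deduced from Proposition~\ref{prop:non-quillen}: Proposition~\ref{prop:tot-degree1}\ref{bound:t0} gives $t_0\le\delta+h^{\max}+1\le\mu k+0+1$, and Proposition~\ref{prop:tot-degree1}\ref{bound:t1} gives $t_1\le\delta+2h^{\max}+2\le\mu k+0+2$. Thus the only ingredient beyond Propositions~\ref{prop:filtration} and \ref{prop:tot-degree1} is the degeneration statement, and I expect the write-up either to cite the configuration-space literature for it or to run the scanning argument by hand; that is where essentially all the content sits.
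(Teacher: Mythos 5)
Your argument has a genuine gap at exactly the point you flag as ``the main obstacle'': the claim that the Miller--Wilson or Totaro spectral sequence degenerates at the page where its entries are induced. You give no argument for this beyond the heuristic that two vector fields ``split the stabilization maps,'' and nothing in the cited sources supports it. Indeed, if the degeneration held, then $\rH^k(\PConf(\cM);A)$ would carry a finite filtration with induced graded pieces, hence would itself be semi-induced with $h^{\max}=-1$; that is strictly stronger than the statement being proved ($h^{\max}\le 0$), and would improve parts (3) and (4) to $t_0\le \mu k$ and $t_1\le \mu k$. For closed manifolds there is no ``add a point at infinity'' operation, and the cohomology of their configuration spaces is in general not semi-induced, so the degeneration you need is not merely unproven --- it should be expected to fail (the torus already admits two independent vector fields and is the kind of example to test). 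Everything downstream of the degeneration in your write-up is fine, but it all hangs on this unsupported step.

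The paper's route avoids spectral sequences entirely for part (2). It uses the theorem of Ellenberg--Wiltshire-Gordon that, when $\cM$ admits two pointwise linearly independent vector fields, the $\FI$-module structure on $\rH^k(\PConf(\cM);A)$ extends to a module over the category $\Set$ of finite sets and all maps. The shift $\Sigma$ of a $\Set$-module is naturally a $\Set_\star$-module, and $\Set_\star$-modules restrict to induced $\FI$-modules (via the $\FI\sharp$ result of Church--Ellenberg--Farb). Hence $\Sigma\bigl(\rH^k(\PConf(\cM);A)\bigr)$ is induced, and Corollary~\ref{cor:hmax} gives $h^{\max}\le 0$ --- note: the \emph{shift} is semi-induced, not the module itself, which is precisely why the bound is $0$ rather than $-1$. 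Part (1) comes from Theorem~\ref{thm:config}, and parts (3) and (4) then follow from Proposition~\ref{prop:tot-degree1} exactly as in your last paragraph. If you want to salvage your approach, you would need to replace the degeneration claim with this kind of functoriality input; as written, the proposal does not constitute a proof.
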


\begin{proof} We will need the following three categories:
 \begin{itemize}
\item Let $\Sharp$ denote the category with objects finite based sets and with morphisms given by maps of based sets such that the preimage of all elements except possibly the base point have cardinality at most one. 
\item Let $\Set$ denote the category of finite sets and all maps.
\item Let $\Set_{\star}$ denote the category of based sets and base point-preserving maps.
\end{itemize}  There is a commuting square of natural functors: 
$$
\xymatrix{
\FI  \ar[d] \ar[r] & \Sharp \ar[d] \\ 
\Set \ar[r] & \Set_{\star}
}
$$
Church--Ellenberg--Farb \cite[Theorem 4.1.5]{fimodules} proved that the restriction of an $\Sharp$-module to $\FI$ is an induced $\FI$-module. Thus, $\Set_{\star}$-modules are also induced $\FI$-modules. Ellenberg and Wiltshire-Gordon \cite[Theorem 14]{EWG} showed that  the $\FI$-module structure on $\rH^k(\PConf(\cM); A)$ extends to the structure of a $\Set$-module if $\cM$ admits a pair of linearly independent vector fields. The shift of a $\Set$-module is naturally a $\Set_{\star}$-module. We conclude that the $\FI$-module structure on $\Sigma \left( \rH^k(\PConf(\cM);A) \right)$ extends to a $\Set_{\star}$-module structure. Thus $\Sigma\left( \rH^k(\PConf(\cM);A) \right)$ is induced. By Corollary~\ref{cor:hmax}, $h^{\max}( \rH^k(\PConf(\cM);A))$ is at most 0, proving Part~\ref{part:2vecH}. Theorem~\ref{thm:config} gives Part~\ref{part:2vecD}, and  Proposition~\ref{prop:tot-degree1} then implies Parts~\ref{part:2vect0} and~\ref{part:2vect1}.
\end{proof}

\section{\TypeB{} spectral sequence arguments and congruence subgroups}
\label{secB}

In this section, we prove Theorem \ref{athm:growth-bounds} which establishes quadratic stable ranges in \TypeB{} spectral sequence arguments. We use this to prove our results on congruence subgroups of general linear groups, Application \ref{athm:congruence-subgroup}.

\subsection{The \TypeB{} setup} 
By a \TypeB{} setup, we mean that we start with a bounded-below complex $M_\bullet$ of FI-modules, together with bounds on  $\bt_k(M_{\bullet})$ for each $i$ (which typically grow linearly in $k$). There is a hyper-homology spectral sequence with the second page $E^2_{i,j} = \rH^{\FI}_i(\rH_j(M))$ converging to $\HH_{i+j}(M_{\bullet})$, and one can analyze this spectral sequence to produce bounds  on $t_0(\rH_j(M_{\bullet}))$ and $t_1(\rH_j(M_{\bullet}))$. Previous methods lead to bounds that are exponential in $j$ even if the bound on $\bt_k(M_{\bullet})$ is linear in $k$; see the proof of  \cite[Theorem~D]{castelnuovo-regularity}.  Our next theorem together with Proposition~\ref{prop:tot-degree1} provides a way to get better bounds in a \TypeB{} setup. It gives a quantitative version of Theorem \ref{athm:growth-bounds}.

\begin{theorem} 
\label{thm:growth-bounds}
Suppose $M_{\bullet}$ is a bounded-below complex of $\FI$-modules with $\bt_k(M_\bullet)<\infty$. Then for all $k$: \begin{enumerate}[label=(\arabic*)]
\item\label{part:deltaLEt} $\delta(\rH_k(M_{\bullet})) \le \bt_k(M_{\bullet})$.
\item\label{part:hmaxbound} $h^{\max}(\rH_k(M_{\bullet})) \le \max_{q<k} h^{\max}(\rH_q(M_{\bullet})) +  \max(\bt_k(M_{\bullet}), \bt_{k+1}(M_{\bullet}))  + \bt_k(M_{\bullet})$. 
\end{enumerate} 

\noindent  In particular, if $\bt_k(M_{\bullet}) \leq ak+b$ for all $k$ and $M_{\bullet}$ is supported on non-negative homological degrees, then we have:\begin{enumerate}[label =(\alph*)]
	\item\label{part:appBa} $\delta(\rH_k(M_{\bullet})) \leq ak+b$.
	\item\label{part:appBb} $h^{\max}(\rH_k(M_{\bullet})) \leq a (k+1)^2 + 2b(k+1) = a k^2 + 2(a+b) k + a + 2 b$.
	\item\label{part:appBc} $t_0(\rH_k(M_{\bullet})) \leq a k^2 + (3a+2b) k + a + 3 b + 1$.
	\item\label{part:appBd} $t_1(\rH_k(M_{\bullet})) \leq 2 a k^2 + (5a + 4b)k + 2a + 5b + 2$.
\end{enumerate}
\end{theorem}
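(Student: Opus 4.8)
The plan is to run the hyper-homology spectral sequence $E^2_{i,j} = \rH^{\FI}_i(\rH_j(M_\bullet)) \Rightarrow \HH_{i+j}(M_\bullet)$ and track the invariants $\delta$ and $h^{\max}$ of $\rH_j(M_\bullet)$ by induction on $j$. For Part~\ref{part:deltaLEt}, I would note that $\delta$ is insensitive to the $\FI$-homological grading in the sense of Proposition~\ref{prop:delta}: I want to bound $\delta(\rH_k(M_\bullet))$ directly in terms of $\bt_k(M_\bullet) = \deg \HH_k(M_\bullet)$. The cleanest route is to use Theorem~\ref{thm:shift-of-homology}, which gives $\bt_i(\Sigma M_\bullet) \le \bt_i(M_\bullet)$; iterating, $\bt_i(\Sigma^n M_\bullet) \le \bt_i(M_\bullet)$ for all $n$. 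Since $\delta = t_0(\Sigma^n(-))$ for $n\gg0$ (Proposition~\ref{prop:delta}\ref{part:deltadegSigma}), and shifting is exact so commutes with taking $\rH_k$, it suffices to bound $t_0$ of $\rH_k$ of a sufficiently-shifted complex. But once $\Sigma^n M_\bullet$ has all its terms semi-induced with small enough $\FI$-homology, the spectral sequence (or a direct diagram chase) forces $t_0(\rH_k(\Sigma^n M_\bullet)) \le \bt_k(\Sigma^n M_\bullet) \le \bt_k(M_\bullet)$; this yields \ref{part:deltaLEt}. (Alternatively, one can observe $\delta(\rH_k(M_\bullet)) \le \bt_k(M_\bullet)$ by comparing the $E^2$-page, which computes the $\FI$-homology of $\rH_k$, to $\HH_k$, using that $\delta$ is a subquotient-monotone invariant via Proposition~\ref{prop:delta}\ref{part:subquotient}.)

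For Part~\ref{part:hmaxbound}, the idea is that the differentials into and out of $E^r_{0,k}$ involve only $E^r_{*,q}$ with $q<k$, so $h^{\max}(\rH_k(M_\bullet))$ should be controlled by $h^{\max}(\HH_k)$ (hence by $\bt_k(M_\bullet)$ via Theorem~\ref{thm:local-cohomology}, since $h^{\max}$ of a torsion module is its degree) together with $h^{\max}(\rH_q(M_\bullet))$ for $q<k$ and various $\delta$-terms. More precisely: $E^2_{0,k} = \rH^{\FI}_0(\rH_k(M_\bullet))$ is a quotient of $\rH_k(M_\bullet)$-related data, and $\rH_k(M_\bullet)$ sits in a short exact sequence whose outer terms are built from semi-induced modules and the $E^\infty$-pieces of $\HH_*$. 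I would make this precise by using Theorem~\ref{thm:local-cohomology} to replace each $\rH_q(M_\bullet)$ by a semi-induced resolution, assemble these with the complex $M_\bullet$ into a double complex (in the spirit of the double-complex lemma used for Proposition~\ref{prop:total-degree-ker-coker}), and run the resulting spectral sequence; convergence to the (torsion, bounded-degree) $\FI$-homology $\HH_*(M_\bullet)$ forces the stated inequality on $h^i(\rH_k(M_\bullet))$, with the $\max(\bt_k,\bt_{k+1})$ coming from the two relevant $\HH$-terms and the extra $+\bt_k(M_\bullet)$ from the $t_0(I^{i-1})+t_0(I^i)-1$ type bound in Theorem~\ref{thm:local-cohomology}, Part~(2)/(3). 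The $\delta$-contributions are absorbed using Part~\ref{part:deltaLEt} and the Theorem~\ref{thm:local-cohomology} bound $h^i \le 2\delta - 2(i-1)$.

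Granting \ref{part:deltaLEt} and \ref{part:hmaxbound}, Parts~\ref{part:appBa}--\ref{part:appBd} are bookkeeping. Part~\ref{part:appBa} is immediate. For \ref{part:appBb}, substitute $\bt_k(M_\bullet)\le ak+b$ into the recursion in \ref{part:hmaxbound}: writing $\eta_k = h^{\max}(\rH_k(M_\bullet))$, one gets $\eta_k \le \eta_{k-1} + (a(k+1)+b) + (ak+b) = \eta_{k-1} + 2ak + a + 2b$ (using $\max(\bt_k,\bt_{k+1}) = a(k+1)+b$ since $a\ge 0$), with $\eta_{-1} = -1 \le 0$ and $\eta_0$ bounded by the $k=0$ case; summing the telescoping recursion from $0$ to $k$ gives $\eta_k \le a\sum_{j=0}^k(2j+1) + 2b(k+1) = a(k+1)^2 + 2b(k+1)$, matching the claim. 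Then \ref{part:appBc} and \ref{part:appBd} follow by plugging $\delta \le ak+b$ and $h^{\max} \le a(k+1)^2 + 2b(k+1)$ into Proposition~\ref{prop:tot-degree1}\ref{bound:t0}, \ref{bound:t1}: $t_0 \le \delta + h^{\max} + 1$ and $t_1 \le \delta + 2h^{\max} + 2$, and simplifying the quadratics.

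The main obstacle I anticipate is making Part~\ref{part:hmaxbound} genuinely rigorous: one must be careful that the double complex assembled from semi-induced resolutions of the various $\rH_q(M_\bullet)$ is compatible with the differential of $M_\bullet$ (this is where the boundedness hypothesis $\bt_k(M_\bullet)<\infty$ and Theorem~\ref{thm:homology-acyclics}\ref{part:abeliancategory} on the abelian category of finitely-presented $\FI$-modules get used), and that the spectral sequence of the double complex really does converge to the local cohomology of $\rH_k$ in the right range. Handling the low-degree terms ($k=0$, and the $E^\infty$ versus $E^2$ discrepancy) also requires some care, but these are finite checks rather than a conceptual difficulty.
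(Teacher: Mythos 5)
Your Part~\ref{part:deltaLEt}, and the bookkeeping for Parts~\ref{part:appBa}--\ref{part:appBd}, match the paper: shift so the spectral sequence $E^2_{p,q}=\rH^{\FI}_p(\rH_q(\Sigma^nM_\bullet))$ collapses in the relevant range, bound $\deg E^2_{0,k}=\deg E^\infty_{0,k}$ by $\bt_k(\Sigma^nM_\bullet)\le\bt_k(M_\bullet)$ via Theorem~\ref{thm:shift-of-homology}, and then telescope and apply Proposition~\ref{prop:tot-degree1}. (Your parenthetical ``alternative'' for Part~\ref{part:deltaLEt} does not work as stated: only $E^\infty_{0,k}$ is a subquotient of $\HH_k(M_\bullet)$, whereas $t_0(\rH_k(M_\bullet))=\deg E^2_{0,k}$, and the comparison between the two goes the wrong way without first shifting to kill the incoming differentials.)

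The genuine gap is in your mechanism for Part~\ref{part:hmaxbound}. You propose to resolve each $\rH_q(M_\bullet)$ by a semi-induced complex as in Theorem~\ref{thm:local-cohomology} and ``assemble these with the complex $M_\bullet$ into a double complex.'' There is nothing to assemble: the modules $\rH_q(M_\bullet)$ for varying $q$ do not form a complex (unlike the four-term exact complex underlying Proposition~\ref{prop:total-degree-ker-coker}), and the terms $M_j$ themselves need not be presented in finite degrees, so Theorem~\ref{thm:local-cohomology} does not apply to them. The paper's route avoids this entirely and is the natural continuation of your own Part~\ref{part:deltaLEt} argument: take $n=\max_{q<k}h^{\max}(\rH_q(M_\bullet))+1$, which by Corollary~\ref{cor:hmax} is exactly the shift making $\Sigma^n\rH_q(M_\bullet)$ semi-induced for all $q<k$. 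The collapsed spectral sequence then gives \emph{both} $t_0(\rH_k(\Sigma^nM_\bullet))\le\bt_k(M_\bullet)$ (from $E^2_{0,k}=E^\infty_{0,k}$) and $t_1(\rH_k(\Sigma^nM_\bullet))\le\bt_{k+1}(M_\bullet)$ (from $E^2_{1,k}=E^\infty_{1,k}$). Feeding these and $\delta\le\bt_k(M_\bullet)$ into the bounds $h^0\le\min(t_0,t_1)+t_1-1$, $h^1\le\delta+t_0-1$, $h^i\le 2\delta-2(i-1)$ of Theorem~\ref{thm:local-cohomology} yields $h^{\max}(\rH_k(\Sigma^nM_\bullet))\le\max(\bt_k,\bt_{k+1})+\bt_k-1$, and unshifting via $h^{\max}(\rH_k(M_\bullet))\le n+h^{\max}(\Sigma^n\rH_k(M_\bullet))$ gives the stated inequality. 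You correctly guessed where each term of the bound comes from, but the double-complex scaffolding should be discarded in favor of this shift-and-apply-Theorem~\ref{thm:local-cohomology} argument. One further prerequisite you elide: before any of this one must know each $\rH_k(M_\bullet)$ is presented in finite degrees (so that Theorem~\ref{thm:shift-theorem}, Corollary~\ref{cor:hmax}, etc.\ apply); the paper establishes this first by an induction on $k$ using the unshifted hyperhomology spectral sequence.
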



\begin{remark}
In the initial distributed version of this paper, we asked if the above theorem could be improved to produce linear ranges. An affirmative answer to that question was subsequently found by Gan and Li \cite[Theorem~5]{gan-li}.
\end{remark}

The first step in proving the above theorem is to show that each $\rH_k(M_\bullet)$ is presented in finite degrees. This is accomplished by the following lemma which we have extracted from the proof of \cite[Theorem~D]{castelnuovo-regularity}.

\begin{lemma}
	\label{lemma:quillen-qualitative}
	Suppose $M_{\bullet}$ is a bounded-below complex of $\FI$-modules with $\bt_k(M_\bullet)<\infty$ for each $k$. Then each $\rH_k(M_\bullet)$ is presented in finite degrees.
\end{lemma}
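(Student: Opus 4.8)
The plan is to induct on the homological degree $k$, using the hyper-homology spectral sequence $E^2_{i,j}=\rH^{\FI}_i(\rH_j(M_\bullet))\Rightarrow \HH_{i+j}(M_\bullet)$ together with the fact (Theorem~\ref{thm:homology-acyclics}\ref{part:abeliancategory}) that the category of $\FI$-modules presented in finite degrees is abelian. Concretely, I would first show that for a bounded-below complex $M_\bullet$ with $\bt_k(M_\bullet)<\infty$ one may, after passing to a quasi-isomorphic complex, assume each $M_k$ is semi-induced (hence $\FI$-homology acyclic); this uses Theorem~\ref{thm:homology-acyclics}(2) and the standard trick of truncating and replacing by a semi-induced resolution, so that $\HH_j(M_\bullet)$ is literally the homology of the complex $\rH^{\FI}_0(M_\bullet)$, and $\bt_j(M_\bullet)=\deg\rH_j(\rH^{\FI}_0(M_\bullet))<\infty$.

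The inductive step proceeds as follows. Suppose $\rH_q(M_\bullet)$ is presented in finite degrees for all $q<k$; I want to conclude the same for $\rH_k(M_\bullet)$. Look at the column $i=0,1,2$ of the spectral sequence in total degree near $k$. The entry $E^2_{0,k}=\rH^{\FI}_0(\rH_k(M_\bullet))$ and $E^2_{1,k}=\rH^{\FI}_1(\rH_k(M_\bullet))$ are the two groups controlling the presentation degree of $\rH_k(M_\bullet)$. The differentials into and out of these spots have source and target among the groups $\rH^{\FI}_i(\rH_j(M_\bullet))$ with $j<k$ (for the outgoing differentials) — which by the inductive hypothesis are graded in finite degrees — together with the abutment $\HH_{k}(M_\bullet)$ and $\HH_{k+1}(M_\bullet)$, which are graded in finite degrees by hypothesis ($\bt_k,\bt_{k+1}<\infty$). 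The only potentially problematic incoming differentials hit $E^r_{0,k}$ and $E^r_{1,k}$ from $E^r_{r,k-r+1}$ and $E^r_{r+1,k-r+1}$ respectively, whose sources again involve only $\rH_j(M_\bullet)$ with $j\le k-1$. Chasing the spectral sequence — taking successive kernels and cokernels of maps between $\FI$-modules presented in finite degrees, and using that such kernels/cokernels stay in finite degrees by Theorem~\ref{thm:homology-acyclics}\ref{part:abeliancategory} — one obtains that $E^\infty_{0,k}$ and $E^\infty_{1,k}$ are presented in finite degrees, and since they are subquotients of $\HH_k(M_\bullet)$ and $\HH_{k+1}(M_\bullet)$ respectively (which are graded in finite degrees), this forces $\rH^{\FI}_0(\rH_k(M_\bullet))$ and $\rH^{\FI}_1(\rH_k(M_\bullet))$ to be graded in finite degrees, i.e.\ $\rH_k(M_\bullet)$ is presented in finite degrees.

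The main subtlety — and the step I'd be most careful about — is handling convergence and the higher derived functors $\rH^{\FI}_i(\rH_k(M_\bullet))$ for $i\ge 2$: a priori these could fail to be graded in finite degrees even when $t_0,t_1$ are, so one must argue only about $E^2_{0,k}$ and $E^2_{1,k}$ and never invoke finiteness of higher $\FI$-homology of $\rH_k(M_\bullet)$. The bookkeeping is essentially that of the proof of \cite[Theorem~D]{castelnuovo-regularity}, which is why the lemma is phrased as having been extracted from it; the point here is merely to isolate the qualitative statement (presented in finite degrees) without tracking the quantitative bounds, which will be recovered afterwards via Theorem~\ref{thm:shift-of-homology} and Proposition~\ref{prop:total-degree-ker-coker}. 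One induction wrinkle: because $M_\bullet$ is only bounded below, the induction should start at the bottom homological degree, where $\rH_k(M_\bullet)$ is a subobject of $M_k$ (a semi-induced, hence finitely-presented, module), giving the base case directly.
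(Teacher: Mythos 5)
Your overall strategy --- induction on $k$ via the hyper-homology spectral sequence $E^2_{p,q}=\rH^{\FI}_p(\rH_q(M_\bullet))\Rightarrow\HH_{p+q}(M_\bullet)$, using Theorem~\ref{thm:homology-acyclics}\ref{part:abeliancategory} to keep kernels and cokernels in finite degrees --- is exactly the paper's. But two steps as written are wrong or incomplete. First, the base case: at the bottom homological degree $k_0$, the homology $\rH_{k_0}(M_\bullet)=\coker(M_{k_0+1}\to M_{k_0})$ is a \emph{quotient} of $M_{k_0}$, not a subobject; more importantly, neither the terms $M_k$ nor their semi-induced replacements need be presented in finite degrees (an induced module $\cI(V)$ has $t_0=\deg V$, which may be infinite), so ``semi-induced, hence finitely-presented'' is false and your base case does not follow. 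The correct observation is that no separate base case is needed: at $k=k_0$ there are no incoming differentials from lower rows, so $E^2_{0,k_0}$ and $E^2_{1,k_0}$ survive to $E^\infty$ and are subquotients of $\HH_{k_0}(M_\bullet)$ and $\HH_{k_0+1}(M_\bullet)$, giving $t_0(\rH_{k_0}(M_\bullet))\le\bt_{k_0}(M_\bullet)<\infty$ and $t_1(\rH_{k_0}(M_\bullet))\le\bt_{k_0+1}(M_\bullet)<\infty$ directly; this is how the paper treats $k=0$.

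Second, your inductive step requires $\deg\rH^{\FI}_r(\rH_{k-r+1}(M_\bullet))<\infty$ for all $r\ge2$, since these are the sources of the incoming differentials to $E^r_{0,k}$ and $E^r_{1,k}$. You assert these are finite ``by the inductive hypothesis'', but the inductive hypothesis only gives $t_0,t_1<\infty$ for the lower homology groups --- and in your ``subtlety'' paragraph you yourself express doubt that finiteness of $t_0,t_1$ controls the higher $t_p$. It does: Theorem~\ref{thm:homology-acyclics}\ref{part:abeliancategory} applied to a presentation shows that the syzygy of a module presented in finite degrees is again presented in finite degrees, whence $t_p<\infty$ for all $p\ge0$. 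This is exactly the step the paper makes explicit before invoking the two degree inequalities, and without it the chase does not close (your worry should instead be discharged, not avoided). With these two corrections your argument coincides with the paper's proof.
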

\begin{proof} We assume, without loss of generality, that $M_{\bullet}$ is supported on non-negative homological degrees, and proceed by induction on $k$. We have the hyper-homology spectral sequence  with second page $E^2_{p,q} = \rH^{\FI}_p(\rH_q(M_{\bullet}))$ converging to $\HH_{p+q}( M_{\bullet})$. The following inequalities follow easily from it: \begin{align*}
	t_0(\rH_k(M_{\bullet})) \le \max( \bt_k(M_\bullet),  \max_{p+q = k+1, q< k} t_p(\rH_q(M_{\bullet}))) \\
	t_1(\rH_k(M_{\bullet})) \le \max( \bt_{k+1}(M_\bullet),  \max_{p+q = k+2, q < k} t_p(\rH_q(M_{\bullet}))) 
	\end{align*}
	In the base case $k=0$, we get $t_0(\rH_0(M_{\bullet})), t_1(\rH_0(M_{\bullet})) < \infty$. This is equivalent to $\rH_0(M_{\bullet})$ being presented in finite degrees. By induction,  assume that $\rH_q(M_\bullet)$ are presented in finite degrees for $q<k$. By Theorem~\ref{thm:homology-acyclics}\ref{part:abeliancategory}, we see that $t_p(\rH_q(M_{\bullet})) < \infty$ for $q < k$ and $p \ge 0$. The two inequalities above thus imply that  $\rH_k(M_{\bullet})$ is presented in finite degrees, completing the inductive step of the proof.
\end{proof}
\begin{proof}[Proof of Theorem~\ref{thm:growth-bounds}]
We assume, without loss of generality, that $M_{\bullet}$ is supported on non-negative homological degrees. By Lemma~\ref{lemma:quillen-qualitative}, each $\rH_k(M_\bullet)$ is presented in finite degrees, and so methods of the earlier sections (for example Theorem~\ref{thm:shift-theorem}) are applicable.

Proof of Part~\ref{part:deltaLEt}: Fix a $k$ and choose an $n$ large enough such that $\Sigma^n \rH_q(M_{\bullet})$ are semi-induced for $q \le k$. We have a spectral sequence  with $E^2_{p,q} = \rH^{\FI}_p(\rH_q(\Sigma^n M_{\bullet}))$ converging to $\HH_{p+q}(\Sigma^n M_{\bullet})$. Since semi-induced modules are $\FI$-homology acyclic (Theorem~\ref{thm:homology-acyclics}), we have $E^2_{p,q} = 0$ for $p > 0 $ and $q \le k$. This causes the spectral sequence to collapse in a range. We conclude that \[\deg E^2_{0, k} = \deg E^\infty_{0, k}= \delta(\rH_k(M_{\bullet})) \le \bt_k(\Sigma^n M_{\bullet}) \le \bt_k(M_{\bullet})\] where the last inequality is by Theorem~\ref{thm:shift-of-homology}. This proves Part~\ref{part:deltaLEt}.

Proof of Part~\ref{part:hmaxbound}: Fix a $k$ and choose an $n$ large such that $\Sigma^n \rH_q(M_{\bullet})$ are semi-induced for $q < k$. This $n$ is precisely $\max_{q<k} h^{\max}(\rH_q(M_{\bullet}))+1$ (Corollary~\ref{cor:hmax}). We have a spectral sequence  with $E^2_{p,q} = \rH^{\FI}_p(\rH_q(\Sigma^n M_{\bullet}))$ converging to $\HH_{p+q}(\Sigma^n M_{\bullet})$. Since semi-induced modules are $\FI$-homology acyclic, we have $E^2_{p,q} = 0$ for $p > 0 $ and $q < k$. This causes the spectral sequence to collapse in a range. This degeneration together with Theorem~\ref{thm:shift-of-homology} leads to the following inequalities: \begin{align*}
t_0( \rH_k(\Sigma^n M_{\bullet})) = \deg E^2_{0, k} = \deg E^\infty_{0, k} \le \bt_k (\Sigma^n M_{\bullet}) \le   \bt_k( M_{\bullet}) \\ 
t_1( \rH_k(\Sigma^n M_{\bullet})) = \deg E^2_{1, k} = \deg E^\infty_{1, k} \le \bt_{k+1}(\Sigma^n M_{\bullet}) \le \bt_{k+1}(M_{\bullet}).
\end{align*}

Since local cohomologies commute with shifts, we have $h^{\max}(\rH_k(M_{\bullet})) \le n + h^{\max}(\Sigma^n \rH_k( M_{\bullet}))$. Clearly, we also have  $\Sigma^n \rH_k(M_{\bullet}) =  \rH_k(\Sigma^n M_{\bullet})$. Now Theorem~\ref{thm:local-cohomology} together with Part~\ref{part:deltaLEt} and the inequalities above yield \[h^{\max}(\rH_k(\Sigma^n M_{\bullet})) \le \max(\bt_k( M_{\bullet}), \bt_{k+1}(M_{\bullet}))  + \bt_k(M_{\bullet}) - 1.\] Thus we have
\begin{align*}
h^{\max}(\rH_k(M_{\bullet})) & \le n + h^{\max}(\rH_k(\Sigma^n M_{\bullet}))\\
& \le n + \max(\bt_k( M_{\bullet}), \bt_{k+1}(M_{\bullet}))  + \bt_k(M_{\bullet}) - 1 \\
& = \max_{q<k} h^{\max}(\rH_q(M_{\bullet})) +  \max(\bt_k (M_{\bullet}), \bt_{k+1}(M_{\bullet}))  + \bt_k(M_{\bullet}).
\end{align*} This completes the proof of \ref{part:hmaxbound}. 

Now we prove the remaining parts. Part~\ref{part:appBa} follows from Proposition~\ref{prop:delta}\ref{part:deltat0gen}. For part \ref{part:appBb}, denote $\max_{q\le k} h^{\max}(\rH_q(M_{\bullet}))$ by $T_k$ and set $T_{-1}=0$. From part \ref{part:hmaxbound}, we have $T_k - T_{k-1} \le a(2k+1) +2b$. Expanding a telescoping sum gives \[T_k = T_k - T_{-1} \le \sum_{i = 0}^k a(2i+1) + 2b = a(k+1)^2 + 2b(k+1) = ak^2 + 2(a+b) k +a + 2b,\] completing the proof of \ref{part:appBb}. Parts \ref{part:appBc} and \ref{part:appBd} follow from Proposition~\ref{prop:tot-degree1} together with \ref{part:appBa} and \ref{part:appBb}.
\end{proof}

\subsection{Congruence subgroups}
\label{subsec:Application-B}

We now prove Application \ref{athm:congruence-subgroup}, stability for congruence subgroups. We will need the following proposition bounding the $\FI$-homology of the chains on congruence $\FI$-groups; the proof of this proposition occupies the next two sections (see \S \ref{sec:hyperhomologyvanishescong}).

Let $E_\bullet G$ denote a functorial resolution of $\Z$ by free $\Z G$-modules (e.g.\ the bar resolution). Then for any $\FI$-group $\Gamma$, we may consider $C_\bullet\Gamma\coloneq E_\bullet\Gamma\otimes_\Gamma \Z$, which is a chain complex of $\FI$-modules with $\rH_k(C_\bullet \Gamma)\iso \rH_k(\Gamma;\Z)$.
 For any  coefficient group $A$ we set $C_\bullet(\Gamma;A)=C_\bullet(\Gamma)\otimes A$.

\begin{proposition} \label{hyperhomologyvanishescong}
Let $I$ be a proper ideal in a ring $R$ satisfying Bass's stable range condition $\SR_{d+2}$, and let $\Gamma=\GL(R,I)$ be the corresponding congruence $\FI$-group. For any coefficient group $A$ we have $\bt_k(C_\bullet (\Gamma;A)) \leq  2k+d$.
\end{proposition}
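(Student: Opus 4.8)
The statement to be established is a bound $\bt_k(C_\bullet(\Gamma;A)) \le 2k+d$ on the $\FI$-hyper-homology of the chains on the congruence $\FI$-group $\Gamma = \GL(R,I)$. The strategy will be the one already announced in the excerpt: by Theorem~\ref{athm:CGammaSPB} (proved in a later section) the $\FI$-homology $\HH_k(C_\bullet\Gamma)$ is identified with the $\GL(R,I)$-equivariant reduced homology $\widetilde{\rH}_{k-1}^{\GL(R,I)}(\SPB(R,I))$ of the $\FI$-simplicial complex of mod-$I$ split partial bases, so it suffices to bound the degree of this equivariant homology. The key geometric input will be the high acyclicity of the complexes $\SPB_n(R,I)$ themselves: building on Charney's work \cite{Charney}, one shows that $\SPB_n(R,I)$ is $\tfrac{n-d-3}{2}$-acyclic (more precisely, Cohen--Macaulay of the appropriate dimension), which is exactly what is needed.

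\textbf{Key steps, in order.} First I would set up the equivariant homology spectral sequence (or isotropy/orbit spectral sequence) for the action of $\Gamma_n = \GL_n(R,I)$ on $\SPB_n(R,I)$: this has $E^1$-page built from group homology of simplex stabilizers and converges to $\widetilde{\rH}_*^{\Gamma_n}(\SPB_n(R,I))$. The stabilizer of a $p$-simplex in $\SPB_n(R,I)$ is, up to the relevant congruence conditions, a copy of $\GL_{n-p-1}(R,I)$ (a lower congruence subgroup), so the equivariant homology in degree $k-1$ only receives contributions from group homology $\rH_{k-1-p}(\GL_{n-p-1}(R,I))$ of terms with $p \le k-1$. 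Second, I would feed in the acyclicity bound: since $\SPB_n(R,I)$ is highly acyclic, $\widetilde{\rH}_{k-1}^{\Gamma_n}(\SPB_n(R,I))$ vanishes once $n$ is large compared to $k$ — precisely, once the acyclicity range $\tfrac{n-d-3}{2}$ exceeds $k-1$, i.e.\ $n \ge 2k+d$, which gives vanishing for $n > 2k+d$ and hence $\bt_k \le 2k+d$. Third, I would translate this back through Theorem~\ref{athm:CGammaSPB} to conclude the stated bound on $\bt_k(C_\bullet(\Gamma;A))$, checking that the argument is insensitive to the coefficient group $A$ (the universal coefficient spectral sequence, or working with $A$-coefficients throughout, handles this since $A$ is flat over itself and the acyclicity statements for $\SPB_n$ hold with any coefficients).

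\textbf{Main obstacle.} The delicate point is not the spectral-sequence bookkeeping but establishing the sharp acyclicity range for $\SPB_n(R,I)$ — this is where the stable range hypothesis $\SR_{d+2}$ enters and where one must be careful about the precise definition of $\SPB_n(R,I)$ (the paper flags that its convention, in which every simplex lies in an $(n-1)$-simplex, differs from other complexes in the literature). One needs a Cohen--Macaulay-type statement rather than mere connectivity, because the equivariant spectral sequence argument requires control over the links of simplices (which are again split-partial-basis complexes of lower rank over the quotient module) in order to run the induction. Deriving this from Charney's results \cite{Charney} requires identifying her complexes with links in $\SPB_n(R,I)$ and verifying that the congruence conditions propagate correctly; this comparison, carried out carefully in the later sections of the paper, is the real content, and once it is in hand the degree bound $2k+d$ drops out by the dimension count above.
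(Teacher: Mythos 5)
Your proposal follows the paper's route in its essentials: reduce via Theorem~\ref{athm:CGammaSPB} to bounding $\widetilde{\rH}_{k-1}^{\GL(R,I)}(\SPB(R,I))$, feed in the acyclicity of $\SPB_n(R,I)$ for $n\geq 2(k-1)+d+3$, and conclude $\bt_k(C_\bullet(\Gamma;A))\leq 2k+d$; your dimension count agrees with the paper's. The one place you diverge, and where you make the argument sound harder than it is, is the spectral sequence you set up and the claim that ``one needs a Cohen--Macaulay-type statement rather than mere connectivity'' together with control over links and stabilizers. The paper does not use the isotropy spectral sequence built from simplex stabilizers for this proposition at all; it uses the other equivariant spectral sequence
$E^2_{pq}=\rH_p\bigl(\GL(R,I);\widetilde{\rH}_q(\SPB(R,I))\bigr)\Rightarrow \widetilde{\rH}_{p+q}^{\GL(R,I)}(\SPB(R,I))$,
whose only inputs are the acyclicity of the whole complex and the trivial observation that $\deg\rH_p(G;V)\leq\deg V$. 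No Cohen--Macaulayness, no identification of stabilizers or links, and no induction on rank are needed; mere acyclicity in the stated range suffices, as your own second step in fact already shows (your first step is superfluous for this statement). Finally, the acyclicity you rightly flag as the real content is obtained in the paper by a short two-step comparison rather than a link analysis: Charney's theorem gives $q$-acyclicity of the ordered complex $\SUtilde_n(R,I)$ for $n\geq 2q+d+3$; the projection $\SUtilde_n(R,I)\onto\SU_n(R,I)$ admits a section and hence is surjective on homology; and the inclusion $\SPB_n(R,I)\into\SU_n(R,I)$ is $(n-d-2)$-connected by the stable range condition. With that supplied, your argument closes.
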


We now prove Application \ref{athm:congruence-subgroup}, stability for congruence subgroups. 
\begin{proof}[Proof of Application \ref{athm:congruence-subgroup}]		
Theorem \ref{thm:growth-bounds} and Proposition \ref{hyperhomologyvanishescong} give Parts \ref{b1}, \ref{b2}, \ref{b3}, and \ref{b4} of Application \ref{athm:congruence-subgroup} in the case that the ideal is proper. We invoke Proposition~\ref{prop:equalizer} and Proposition~\ref{prop:over-fields} to deduce Parts \ref{polycong} and \ref{polycoeq} of Application \ref{athm:congruence-subgroup}.

For $I=R$, much stronger results are already known, since the groups $\GL_n(R,I) \cong\GL_n(R)$ exhibit classical homological stability. Van der Kallen \cite{vdK} showed that $\rH_k(\GL_n(R)) \m \rH_k(\GL_{n+1}(R))$ is an isomorphism for $n \geq 2k+\max(1,d)$ and a surjection for $n \geq 2k+\max(1,d) -1$. This shows that $\rH_k(\GL(R))$ has stable degree $\leq 0$, generation degree $\leq 2k+\max(1,d) -1$, local degree $\leq 2k+\max(1,d) -1$ (see Theorem~\ref{thm:local-cohomology}(3)) and presentation degree $\leq 2k+\max(1,d)$. These bounds are all at least as good as Application \ref{athm:congruence-subgroup} claims, so Application \ref{athm:congruence-subgroup} is also true for $I=R$. 
\end{proof}

\section{The $\FI$-homology of the chains on an $\FI$-group} \label{secFIgroup}

For any injective $\FI$-group $\Gamma$ (meaning that all maps $\Gamma_T\to \Gamma_R$ are injective) Church--Ellenberg defined in \cite[Proposition~5.13]{castelnuovo-regularity}
a natural complex of $\FI$-modules $X_\Gamma$ on which $\Gamma$ acts.\footnote{The hypothesis that $\Gamma$ is injective was not mentioned explicitly in \cite{castelnuovo-regularity}, but the definition of $X_\Gamma$ only makes sense when $\Gamma$ is injective, as we will see below.}
 The definition of $X_\Gamma$ will be spelled out explicitly below in Definition~\ref{def:XGamma}.

The purpose of this section is to prove the following proposition:
\begin{proposition}
\label{prop:homologyofchains}
For any injective $\FI$-group $\Gamma$, there is a quasi-isomorphism \[\rL \rH^{\FI}(C_\bullet \Gamma)=\rL \rH^{\FI}(E_\bullet \Gamma\otimes_\Gamma \Z)\to E_\bullet\Gamma \otimes_\Gamma  X_\Gamma.\] In particular, $\HH_k(C_\bullet \Gamma)\iso \rH_k(\Gamma;X_\Gamma)$.
\end{proposition}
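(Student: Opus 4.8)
The plan is to construct the desired quasi-isomorphism by realizing $\rL\rH^{\FI}$ as a functor applied to an explicit complex of projective (or semi-induced) $\FI$-modules, and then identifying that complex with the Church--Ellenberg complex $E_\bullet\Gamma\otimes_\Gamma X_\Gamma$. First I would recall the definition of $X_\Gamma$ (Definition~\ref{def:XGamma} below): for an injective $\FI$-group $\Gamma$, the complex $X_\Gamma$ in each homological degree should be a direct sum over injections $[m]\into S$ of copies of $\Z[\Gamma_S/\Gamma_{[m]}^{\perp}]$ or a similar induced construction, assembled so that $E_\bullet\Gamma\otimes_\Gamma X_\Gamma$ computes something like a bar-type resolution twisted by the combinatorics of $\FI$. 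The key structural point is that each term of $X_\Gamma$, after tensoring with $E_\bullet\Gamma$ over $\Gamma$, should be an induced $\FI$-module — hence $\rH^{\FI}_0$-acyclic by Theorem~\ref{thm:homology-acyclics}(2) — and that the augmentation $E_\bullet\Gamma\otimes_\Gamma X_\Gamma \to ?$ is designed so that applying $\rH^{\FI}_0$ recovers $C_\bullet\Gamma$ up to quasi-isomorphism.

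The core steps I would carry out are: (1) Show that $E_\bullet\Gamma\otimes_\Gamma X_\Gamma$ is a complex of semi-induced (indeed induced) $\FI$-modules; this follows from the explicit description of $X_\Gamma$ as a sum of $\FI$-modules of the form $\cI(V)$ and the fact that $E_\bullet\Gamma\otimes_\Gamma(-)$ preserves such direct sums and the induced structure, since the $\Gamma$-action is free on $E_\bullet\Gamma$. (2) Construct a natural map of complexes of $\FI$-modules $P_\bullet \to E_\bullet\Gamma\otimes_\Gamma X_\Gamma$ where $P_\bullet$ is a chosen projective resolution of $C_\bullet\Gamma$, or more directly show that $E_\bullet\Gamma\otimes_\Gamma X_\Gamma$ together with its natural map to (a shift/truncation involving) $C_\bullet\Gamma$ is itself a resolution by $\rH^{\FI}_0$-acyclics. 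The essential input here is an acyclicity statement: the complex $X_\Gamma$ should be built so that $X_\Gamma\to \Z$ (or the relevant augmentation) is a quasi-isomorphism of $\FI$-modules after an appropriate bookkeeping, which is exactly the content of the Church--Ellenberg construction in \cite[Proposition~5.13]{castelnuovo-regularity}. (3) Apply $\rH^{\FI}_0$ termwise: since each term is acyclic, $\rL\rH^{\FI}$ of $C_\bullet\Gamma$ is computed by $\rH^{\FI}_0(E_\bullet\Gamma\otimes_\Gamma X_\Gamma)$, and I would check that $\rH^{\FI}_0(E_\bullet\Gamma\otimes_\Gamma \cI(V)) \iso E_\bullet\Gamma\otimes_\Gamma V$ in a way compatible with differentials, which upgrades to the stated quasi-isomorphism $\rL\rH^{\FI}(C_\bullet\Gamma)\to E_\bullet\Gamma\otimes_\Gamma X_\Gamma$ once one absorbs the $\rH^{\FI}_0$ into the $X_\Gamma$ side. (4) Finally, pass to homology: $\HH_k(C_\bullet\Gamma) = \rH_k(\rL\rH^{\FI}(C_\bullet\Gamma)) \iso \rH_k(E_\bullet\Gamma\otimes_\Gamma X_\Gamma) = \rH_k(\Gamma; X_\Gamma)$, where the last equality is the definition of group homology with coefficients in the $\Gamma$-module (complex) $X_\Gamma$.

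The main obstacle I anticipate is step (2): verifying that $E_\bullet\Gamma\otimes_\Gamma X_\Gamma$ really is a resolution of $C_\bullet\Gamma$ by $\rH^{\FI}_0$-acyclics, i.e. that the total complex has homology concentrated correctly and that the augmentation is a quasi-isomorphism. This requires a spectral sequence argument (filtering the double complex $E_\bullet\Gamma\otimes_\Gamma X_\Gamma$ by either the $E_\bullet\Gamma$-degree or the $X_\Gamma$-degree) together with the key acyclicity property of $X_\Gamma$ from \cite{castelnuovo-regularity} — namely that $X_\Gamma$ is a resolution of the trivial module in the relevant sense, using crucially that $\Gamma$ is injective so that the stabilizers $\Gamma^{\perp}_{[m]}$ and quotients appearing in $X_\Gamma$ behave well under the $\FI$-maps. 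A secondary subtlety is tracking naturality: ensuring the quasi-isomorphism is natural in $\Gamma$ and compatible with the derived-functor definition of $\rL\rH^{\FI}$, which I would handle by choosing $E_\bullet\Gamma$ functorially (the bar resolution) throughout, as the statement already stipulates.
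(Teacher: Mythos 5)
There is a genuine gap, and it lies in the basic architecture of your argument rather than in a checkable detail. You propose to exhibit $E_\bullet\Gamma\otimes_\Gamma X_\Gamma$ as a complex of induced (hence $\rH^{\FI}_0$-acyclic) $\FI$-modules resolving $C_\bullet\Gamma$, and then to apply $\rH^{\FI}_0$ termwise. Both halves of this fail. First, the terms are not induced: in $X_\Gamma$-degree $0$ the complex $E_\bullet\Gamma\otimes_\Gamma X_\Gamma$ is literally $E_\bullet\Gamma_T\otimes_{\Gamma_T}\Z=C_\bullet\Gamma_T$, and $C_\bullet\Gamma$ is not an induced $\FI$-module in general; more generally the degree-$k$ term is $\bigoplus_{T=S\disjoint R,\,\abs{R}=k}\bigl(E_\bullet\Gamma_T\otimes_{\Gamma_{S}}\Z\bigr)\boxtimes\orient_R$, whose coefficients depend on $T$ and are not of the form $\cI(V)$. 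Second, $E_\bullet\Gamma\otimes_\Gamma X_\Gamma$ is not quasi-isomorphic to $C_\bullet\Gamma$: already its $\rH_0$ vanishes for $T\neq\emptyset$ (because $d\colon \bigoplus_{t}\Z[\Gamma_T/\Gamma_{T\setminus\{t\}}]\to\Z$ is surjective), while $\rH_0(C_\bullet\Gamma_T)=\Z$ for every $T$. Relatedly, $X_\Gamma\to\Z$ is not a quasi-isomorphism --- $X_\Gamma[1]$ is the chain complex of $Y_\Gamma$ (for congruence subgroups, of $\SPB(R,I)$), which has plenty of higher homology. Finally, even if you had such a resolution, applying $\rH^{\FI}_0$ to it would produce $\rL\rH^{\FI}(C_\bullet\Gamma)$ as an abstract object; you would still have to identify that output with $E_\bullet\Gamma\otimes_\Gamma X_\Gamma$ itself, which is the content of the proposition. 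The phrase ``absorb the $\rH^{\FI}_0$ into the $X_\Gamma$ side'' papers over exactly this missing identification.

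The mechanism you are missing is that the proposition asserts $E_\bullet\Gamma\otimes_\Gamma X_\Gamma$ \emph{is} the derived $\FI$-homology, computed Koszul-style, not a resolution to which one applies $\rH^{\FI}_0$. The paper's proof runs: (i) by Church--Ellenberg, $\rL\rH^{\FI}(M)\iso V(\innerpull{M})$, where $V(\innerpull{M})_T=\bigoplus_{T=S\disjoint R}M_S\boxtimes\orient_R$ is an explicit complex built from the values of $M$ on \emph{subsets} of $T$ (Lemma~\ref{lemma:VMbar-homology}); (ii) Shapiro's lemma gives a quasi-isomorphism of $\FIarrow$-modules $\innerpull{C_\bullet\Gamma}\to\outerpull{E_\bullet\Gamma}\otimes_{\outerpull{\Gamma}}F_\Gamma$, replacing $E_\bullet\Gamma_S\otimes_{\Gamma_S}\Z$ by $E_\bullet\Gamma_T\otimes_{\Gamma_T}\Z[\Gamma_T/\Gamma_S]$ (this is where injectivity of $\Gamma$ enters); (iii) the functor $V$ commutes with $-\otimes_\Gamma-$ (Lemma~\ref{lemma:VAF}), giving $V(\outerpull{E_\bullet\Gamma}\otimes_{\outerpull{\Gamma}}F_\Gamma)\iso E_\bullet\Gamma\otimes_\Gamma V(F_\Gamma)=E_\bullet\Gamma\otimes_\Gamma X_\Gamma$. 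Your proposal never invokes the Koszul-complex description (i), which is the only non-formal input identifying $\rL\rH^{\FI}$ with a concrete complex, and without it the argument cannot be completed along the lines you sketch.
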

In other words, the $\FI$-homology of the chains on the $\FI$-group $\Gamma$ is computed by the $\Gamma$-equivariant homology of the $\FI$-complex $X_\Gamma$.
Since $X_\Gamma$ consists of free abelian groups, Proposition~\ref{prop:homologyofchains} has the following corollary.
\begin{corollary}
\label{cor:homologyofchainscoeff}
$\HH_k(C_\bullet (\Gamma;A))\iso\rH_k(\Gamma;X_\Gamma\otimes A)$ for any coefficient group $A$.
\end{corollary}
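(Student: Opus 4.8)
The statement to be proven is Corollary~\ref{cor:homologyofchainscoeff}: that $\HH_k(C_\bullet(\Gamma;A))\iso \rH_k(\Gamma;X_\Gamma\otimes A)$ for any coefficient group $A$. The plan is to deduce this from Proposition~\ref{prop:homologyofchains} by the standard universal coefficients / flatness argument. The key observation is that $C_\bullet(\Gamma;A)=C_\bullet\Gamma\otimes A$ by definition, and that $X_\Gamma$ is a complex of \emph{free} abelian groups (this is the point flagged in the excerpt just before the corollary is stated), so tensoring with $A$ will commute with all the relevant constructions.

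\textbf{Step 1.} First I would record that the quasi-isomorphism of Proposition~\ref{prop:homologyofchains}, namely $\rL\rH^{\FI}(E_\bullet\Gamma\otimes_\Gamma\Z)\to E_\bullet\Gamma\otimes_\Gamma X_\Gamma$, can be chosen at the level of complexes after replacing $C_\bullet\Gamma$ by a quasi-isomorphic complex $P_\bullet$ of projective (equivalently semi-induced, by Theorem~\ref{thm:homology-acyclics}(2)) $\FI$-modules; then $\rL\rH^{\FI}(C_\bullet\Gamma)$ is computed by $\rH^{\FI}_0(P_\bullet)$. The point is that since each $P_i$ is projective over $\FI$ (hence termwise a projective $\Z[S_n]$-module, in particular flat over $\Z$), the complex $P_\bullet\otimes A$ still computes $\rL\rH^{\FI}(C_\bullet\Gamma\otimes A)=\rL\rH^{\FI}(C_\bullet(\Gamma;A))$ --- here one uses that $\rH^{\FI}_0$ commutes with $-\otimes A$ (it is defined by a colimit / cokernel, which commutes with tensoring) and that $P_\bullet\otimes A\to C_\bullet\Gamma\otimes A$ remains a quasi-isomorphism because the $P_i$ are flat. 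So $\HH_k(C_\bullet(\Gamma;A))=\rH_k(\rH^{\FI}_0(P_\bullet)\otimes A)$.

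\textbf{Step 2.} Next I would apply the same reasoning to the right-hand side. Tensoring the quasi-isomorphism $\rH^{\FI}_0(P_\bullet)\simeq E_\bullet\Gamma\otimes_\Gamma X_\Gamma$ with $A$: since $X_\Gamma$ is a complex of free abelian groups, $E_\bullet\Gamma\otimes_\Gamma X_\Gamma$ is a complex of free abelian groups (each $E_i\Gamma_n$ is $\Z\Gamma_n$-free, and $\otimes_{\Gamma_n}$ of free modules with a free abelian group is free abelian), so the two sides remain quasi-isomorphic after $-\otimes A$, and moreover $(E_\bullet\Gamma\otimes_\Gamma X_\Gamma)\otimes A\iso E_\bullet\Gamma\otimes_\Gamma(X_\Gamma\otimes A)$ by associativity of tensor products. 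Taking $\rH_k$ of both sides and combining with Step 1 yields $\HH_k(C_\bullet(\Gamma;A))\iso \rH_k(E_\bullet\Gamma\otimes_\Gamma(X_\Gamma\otimes A))=\rH_k(\Gamma;X_\Gamma\otimes A)$, which is the claim.

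\textbf{Main obstacle.} This corollary is genuinely routine given Proposition~\ref{prop:homologyofchains}; there is no deep step. The only thing one has to be a little careful about is making sure the quasi-isomorphism of Proposition~\ref{prop:homologyofchains} is used at a point in the derivation where everything in sight is a complex of flat (indeed free) $\Z$-modules, so that $-\otimes A$ may be applied without introducing $\Tor$ terms --- this is exactly why the hypothesis ``$X_\Gamma$ consists of free abelian groups'' is invoked, and why one wants to work with a projective $\FI$-resolution $P_\bullet$ rather than with $C_\bullet\Gamma$ directly. So in the write-up I would simply spell out this flatness bookkeeping and cite Proposition~\ref{prop:homologyofchains}, Theorem~\ref{thm:homology-acyclics}(2), and the fact that $\rH^{\FI}_0$ commutes with tensoring by an abelian group.
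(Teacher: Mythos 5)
Your proposal is correct and is exactly the argument the paper intends: the paper gives no separate proof of this corollary beyond the one-line remark that $X_\Gamma$ consists of free abelian groups, so the quasi-isomorphism of Proposition~\ref{prop:homologyofchains} (together with the $\Z$-flatness of all complexes involved) survives tensoring with $A$. Your write-up merely spells out the flatness bookkeeping that the paper leaves implicit, and it does so correctly.
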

In light of these results, one immediately sees the importance of  understanding the $\FI$-complex  $X_\Gamma$. In service to this aim, we give in Section~\ref{sec:describecomplex} an explicit description for two representative $\FI$-groups: the general linear group $\GL(R)$ and its congruence subgroup $\GL(R,I)$.

\subsection{The complex of $\FI$-modules $V(F)$} Throughout the proof of Proposition~\ref{prop:homologyofchains}, we will make use of the following construction (originating in \cite[Construction~5.7]{castelnuovo-regularity}).

Let $\FIarrow$ denote the arrow category of $\FI$, which we will think of as follows: an object of $\FIarrow$ is a pair $(T,S)$ where $T$ is a finite set and $S\subset T$ is a subset (not necessarily proper). We will denote such an object as $S\subset T$, but we emphasize that this is merely formal notation for the pair $(T,S)$. A morphism from $S\subset T$ to $S'\subset T'$ is an injection $f\colon T\into T'$ such that $f(S)\subset S'$.

For any finite set $R$, we denote by $\orient_R$ the space of \emph{orientations} of $R$. As an abelian group $\orient_R$ is always isomorphic to $\Z$. A bijection $R\xrightarrow{\iso}R'$ induces an isomorphism $\orient_R\xrightarrow{\iso}\orient_{R'}$, and permutations of $R$ act on $\orient_R$ by the sign representation. When we wish to emphasize that $R$ has cardinality $\abs{R}=k$, we write $\orient^k_R$.

\begin{definition}
We define an exact functor $V\colon \FIarrowMod\to \Ch(\FIMod)$ as follows. On objects, $V(F)_T$ is the graded abelian group 
\[V(F)_T\coloneq \bigoplus_{T=S\disjoint R}F_{S\subset T}\boxtimes \orient_R,\] graded by the cardinality of $R$. That is,
\begin{equation}
\label{eq:VFTk}
(V(F)_T)_k= \bigoplus_{\substack{T=S\disjoint R\\\abs{R}=k}}F_{S\subset T}\boxtimes \orient^k_R.
\end{equation}
(The symbol $\boxtimes$ here means simply $\otimes_\Z$; we write $\boxtimes$ for visual distinctness, and to emphasize later that certain constructions act only on the first factor of $F_{S\subset T}\boxtimes \orient_R$.)

An $\FI$-morphism $f\colon T\into T'$ sends the summand indexed by $T=S\disjoint R$ to the summand indexed by $T'=S'\disjoint f(R)$ for $S'\coloneq T'\setminus f(R)$. If $\abs{R}=k$, the map \[F_{S\subset T}\boxtimes \orient^k_R\to F_{S'\subset T'}\boxtimes \orient^k_{f(R)}\] is given on the first factor  by  the $\FIarrow$-morphism $f_*\colon F_{S\subset T}\to F_{S'\subset T'}$ and on the second factor by the $\FI$-morphism $f_*\colon \orient^k_R\xrightarrow{\iso}\orient^k_{f(R)}$. This gives $V(F)$ the structure of a graded $\FI$-module.


Finally, the differential $d$ is given on the summand $F_{S\subset T}\boxtimes \orient^k_R$ by the alternating sum over $r\in R$ of maps \[F_{S\subset T}\boxtimes \orient^k_R\to F_{S\disjoint\{r\}\subset T}\boxtimes \orient^{k-1}_{R\setminus\{r\}}.\]
That $V$ is exact is visible from the defining formula \eqref{eq:VFTk}. 
\end{definition}
\begin{remark}
This construction is due to Church--Ellenberg, and was denoted $C^F_\bullet$ in \cite[Construction~5.7]{castelnuovo-regularity}. In particular, the reader concerned about consistency of signs or checking that $d^2=0$ can find all details spelled out in \cite{castelnuovo-regularity}. However be aware that this construction was phrased there in terms of the \emph{twisted} arrow category $\FItwist$; our description here corresponds to \cite[Construction~5.7]{castelnuovo-regularity} under the isomorphism $\FIarrow\iso \FItwist$ given by $S\subset T\ \ \mapsto\ \ (T\setminus S)\subset T$.
\end{remark}

The following is a standard result on triangulated categories.

\begin{lemma}
	Let $\cA, \cA'$ be abelian categories, and let $X \colon \Ch^+(\Mod_{\cA} ) \to \Ch^+(\Mod_{\cA'} )$ be a functor. Then $X$ induces a triangulated functor  $X \colon \rD^+(\Mod_{\cA} ) \to \rD^+(\Mod_{\cA'} )$ if the following hold: \begin{enumerate}
		\item $X$ commutes with homological shifts.
		\item $X$ maps distinguished triangles $M \xrightarrow{f} N \to \Cone(f)$ to distinguished triangles.
		\item $X$ takes quasi-isomorphisms to quasi-isomorphisms. 
	\end{enumerate}
\end{lemma}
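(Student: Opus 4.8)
The plan is to verify the three hypotheses and then cite the standard machinery that turns a functor on chain complexes satisfying them into a triangulated functor on the bounded-below derived category. The statement is essentially the universal property of localization: $\rD^+(\Mod_\cA)$ is obtained from $\Ch^+(\Mod_\cA)$ by inverting quasi-isomorphisms, so condition (3) alone already gives a functor $X\colon \rD^+(\Mod_\cA)\to\rD^+(\Mod_{\cA'})$ by the universal property; conditions (1) and (2) are precisely what is needed to upgrade this to a triangulated functor, since the triangulated structure on a derived category is generated by the shift and by mapping cones of chain maps.

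Concretely, I would argue as follows. First, since $X$ sends quasi-isomorphisms to quasi-isomorphisms, the composite $\Ch^+(\Mod_\cA)\xrightarrow{X}\Ch^+(\Mod_{\cA'})\to\rD^+(\Mod_{\cA'})$ inverts quasi-isomorphisms, so by the universal property of the localization $\Ch^+(\Mod_\cA)\to\rD^+(\Mod_\cA)$ it factors uniquely through a functor $X\colon\rD^+(\Mod_\cA)\to\rD^+(\Mod_{\cA'})$. Next, condition (1) shows this functor commutes with the shift functors (which on the derived category are induced by the shift on complexes). Finally, recall that every distinguished triangle in $\rD^+(\Mod_\cA)$ is isomorphic to one of the form $M\xrightarrow{f}N\to\Cone(f)\to M[1]$ for an honest chain map $f$ of complexes; condition (2), together with the fact that the connecting map $\Cone(f)\to M[1]$ is also given by an explicit chain-level formula compatible with $X$, shows that $X$ carries this triangle to a distinguished triangle in $\rD^+(\Mod_{\cA'})$. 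Since being a triangulated functor only requires preserving the shift and distinguished triangles, this completes the argument.

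The only genuinely delicate point—and the one I would expect to be the main obstacle—is bookkeeping around mapping cones: one must check that $X$ is compatible not just with the objects $\Cone(f)$ but with the full triangle data, i.e.\ that $X$ applied to the canonical maps $N\to\Cone(f)$ and $\Cone(f)\to M[1]$ yields (up to the identifications from condition (1)) the canonical maps for $X(f)$. In our situation the relevant $X$ (namely $V$ of Definition~\ref{def:XGamma}'s ingredient, built from the exact functor $F\mapsto V(F)$) is a functor which is degreewise exact and additive, and for such functors the chain-level formula $\Cone(f)_n=M_{n-1}\oplus N_n$ with its differential is preserved on the nose; so this compatibility is immediate and requires only unwinding definitions. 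I would therefore present the proof as: invoke the universal property of localization using (3), note (1) gives compatibility with shifts, and note that an additive functor preserving quasi-isomorphisms automatically sends the standard triangle $M\to N\to\Cone(f)\to M[1]$ to the corresponding standard triangle, which by (2) is distinguished; hence $X$ descends to a triangulated functor.

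\begin{proof}
Since $X$ takes quasi-isomorphisms to quasi-isomorphisms (hypothesis~(3)), the composite $\Ch^+(\Mod_\cA)\xrightarrow{X}\Ch^+(\Mod_{\cA'})\to\rD^+(\Mod_{\cA'})$ inverts quasi-isomorphisms, so by the universal property of the Verdier localization $\Ch^+(\Mod_\cA)\to\rD^+(\Mod_\cA)$ it factors through a well-defined functor $X\colon\rD^+(\Mod_\cA)\to\rD^+(\Mod_{\cA'})$. By hypothesis~(1), this functor commutes with the shift functors on the derived categories, which are induced by the shift on complexes. Finally, every distinguished triangle in $\rD^+(\Mod_\cA)$ is isomorphic to one of the form $M\xrightarrow{f}N\to\Cone(f)\to M[1]$ for a chain map $f$ of bounded-below complexes, together with the canonical maps. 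Applying $X$ and using hypothesis~(2), the image of this triangle is distinguished in $\rD^+(\Mod_{\cA'})$. Since a triangulated functor is precisely one preserving the shift and distinguished triangles, $X$ is triangulated.
\end{proof}
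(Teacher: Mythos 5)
Your proof is correct. The paper itself offers no proof of this lemma, labelling it ``a standard result on triangulated categories,'' and your argument --- factoring through the Verdier localization via hypothesis~(3), then using hypotheses~(1) and~(2) to check compatibility with shifts and with the standard triangles $M\xrightarrow{f}N\to\Cone(f)\to M[1]$, which generate all distinguished triangles up to isomorphism --- is precisely the standard reasoning being invoked, so it fills the gap in exactly the intended way.
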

%
%
%
%
%
%
%

\begin{lemma}
\label{lem:construction}
$V$ induces  a triangulated functor $V\colon \rD^+(\FIarrowMod)\to \rD^+(\FIMod)$.
\end{lemma}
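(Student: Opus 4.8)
The plan is to verify the three hypotheses of the preceding lemma for the functor $V$, since the preceding lemma then immediately produces the desired triangulated functor on derived categories. All three are essentially formal consequences of the explicit definition of $V$ via the formula \eqref{eq:VFTk}, so the proof will be short and will mostly amount to unwinding that definition.

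First I would check that $V$ commutes with homological shifts. Since $V$ does not itself introduce any shift in homological degree (the grading in \eqref{eq:VFTk} is by $\abs{R}$, and a shift of the input complex $F_\bullet$ just reindexes the complex without interacting with the $S\disjoint R$ decomposition), this is immediate from the formula $V(F[1])=V(F)[1]$, with signs on the differential tracked exactly as in \cite[Construction~5.7]{castelnuovo-regularity}. Second, I would check that $V$ sends a mapping cone to a mapping cone: given a chain map $f\colon M_\bullet\to N_\bullet$ of complexes of $\FIarrow$-modules, the exactness of $V$ in the single-module sense (noted right after \eqref{eq:VFTk}, since \eqref{eq:VFTk} is a direct sum over a fixed finite index set) means $V$ commutes with the formation of the total complex of a double complex; applying this to the double complex computing $\Cone(f)$ gives $V(\Cone(f))\cong\Cone(V(f))$, hence distinguished triangles go to distinguished triangles.

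The third hypothesis — that $V$ takes quasi-isomorphisms to quasi-isomorphisms — is the one requiring the most thought, though it is still routine. The key point is that for each finite set $T$, the functor $F\mapsto V(F)_T$ from $\Ch^+(\FIarrowMod)$ to $\Ch^+(\Mod_\Z)$ is, up to the exact twist by the one-dimensional groups $\orient^k_R$, a finite direct sum (over decompositions $T=S\disjoint R$) of evaluation functors $F_\bullet\mapsto (F_{S\subset T})_\bullet$, followed by assembling into a single complex with a differential that is a finite sum of the resulting maps. Evaluation at an object $S\subset T$ of $\FIarrow$ is exact on $\FIarrow$-modules and so preserves quasi-isomorphisms of complexes; a finite direct sum of quasi-isomorphisms is a quasi-isomorphism; and since $V(F)_T$ is obtained from these pieces by a finite filtration (by the homological degree $\abs R$) whose associated graded consists of such direct sums, a spectral sequence (or a direct double-complex) argument shows $V(f)_T$ is a quasi-isomorphism whenever $f$ is. As this holds for every $T$, $V(f)$ is a quasi-isomorphism of complexes of $\FI$-modules. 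The main obstacle, such as it is, is bookkeeping: making sure the filtration/double-complex argument for the last point is set up so that the spectral sequence converges (which it does, since everything is bounded below and $V$ introduces only finitely many summands in each degree), and being careful that the signs in the differential match those of \cite[Construction~5.7]{castelnuovo-regularity} so that $V$ is genuinely a functor on chain complexes before one passes to the derived category.
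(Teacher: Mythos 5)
Your proposal is correct and follows essentially the same route as the paper: extend $V$ to bounded-below complexes via the total complex of the evident double complex, note that the first two hypotheses of the preceding lemma are immediate, and verify preservation of quasi-isomorphisms via the spectral sequence of the filtration by $\abs{R}$, using the exactness of $V$ on single $\FIarrow$-modules.
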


\begin{proof}
If $F=F_\bullet$ is a bounded-below chain complex of $\FIarrow$-modules, the functoriality of $V$ makes $V(F_q)_p$ into a double complex of $\FI$-modules; we define $V(F)$ to be its total complex, satisfying 
\[(V(F)_T)_k=\bigoplus_{\substack{p+q=k\\}}\bigoplus_{\substack{T=S\disjoint R\\\abs{R}=p}}(F_q)_{S\subset T}\boxtimes \orient^p_R.\]
This defines an extension $V\colon \Ch^+(\FIarrowMod)\to \Ch^+(\FIMod)$. It now suffices to check the conditions from the previous lemma, of which the first two are trivial. For the third condition, note that from the obvious vertical grading we have a spectral sequence \[(E^0_{p\bullet})^F=\bigoplus_{\substack{T=S\disjoint R\\\abs{R}=p}}(F_\bullet)_{S\subset T}\boxtimes \orient^p_R\quad\implies\quad \rH_k(V(F)).\] If $f\colon F_\bullet\to G_\bullet$ is a quasi-isomorphism, the exactness of the original $V$ means the induced map $(E^0_{p\bullet})^F\to (E^0_{p\bullet})^G$ will be a quasi-isomorphism with respect to $d^0$. It follows that $f$ induces an isomorphism on $E^1$, so $f\colon V(F)\to V(G)$ is a quasi-isomorphism. This shows that $V$ descends to a triangulated functor $V\colon \rD^+(\FIarrowMod)\to \rD^+(\FIMod)$, as claimed.
\end{proof}



%
%
%
%

There are \emph{two} natural ways to view an $\FI$-module as an $\FIarrow$-module, and both will be used in this section. They arise from the two natural functors $\FIarrow\to \FI$ defined by
\[(T,S)\mapsto S\qquad\text{and}\qquad(T,S)\mapsto T\]
Accordingly, if $M$ is an $\FI$-module, let  $\innerpull{M}$ and $\outerpull{M}$ be the $\FIarrow$-modules defined by
\[\innerpull{M}_{S\subset T}=M_S\qquad\text{and}\qquad\outerpull{M}_{S\subset T}=M_T.\]
Both  $M\mapsto\innerpull{M}$ and $M\mapsto \outerpull{M}$ are exact, so we use the same notation for $M\in \rD^+(\FIMod)$.
\begin{lemma}
\label{lemma:VMbar-homology}
For any $M\in \rD^+(\FIMod)$ we have $V(\innerpull{M})\iso\rL \rH^{\FI}(M)$.
\end{lemma}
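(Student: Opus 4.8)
The plan is to exhibit $V(\innerpull{M})$ as the output of a functor that agrees with $\rL\rH^{\FI}$ on a class of modules computing the derived functor, and then use the triangulated structure from Lemma~\ref{lem:construction} to conclude. Concretely, I would proceed in two steps: first prove the identity on the level of the abelian categories for projective (or induced) $\FI$-modules, then upgrade to a natural isomorphism of triangulated functors on $\rD^+(\FIMod)$.

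\textbf{Step 1: identify $V(\innerpull{M})$ for $M$ induced.} For an induced module $M=\cI(W)$ with $W$ an $\FB$-module, I would compute $V(\innerpull{\cI(W)})_T$ directly from the defining formula
\[
(V(\innerpull{\cI(W)})_T)_k=\bigoplus_{\substack{T=S\disjoint R\\\abs{R}=k}}\cI(W)_{S}\boxtimes\orient^k_R,
\]
and recognize the resulting complex, as $R$ ranges over subsets of $T$ and the differential adds elements to $S$ from $R$, as (a sum over the decompositions coming from the $\FI\sharp$-structure of) the augmented chain complex of a simplex, which is acyclic except in degree $0$ where it computes $\rH^{\FI}_0(\cI(W))_T=W_T$. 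In other words, $V(\innerpull{\cI(W)})$ is quasi-isomorphic to $\rH^{\FI}_0(\cI(W))=W$ concentrated in degree $0$, which is exactly $\rL\rH^{\FI}(\cI(W))$ since induced modules are $\FI$-homology acyclic (Theorem~\ref{thm:homology-acyclics}). This is essentially the computation already carried out in \cite{castelnuovo-regularity} for the complex $C^F_\bullet$, so I would cite it rather than redo the sign bookkeeping. I also need the natural transformation: there is an evident augmentation $V(\innerpull{M})\to \rH^{\FI}_0(M)$ (project to the $R=\emptyset$ summand $M_T$ and apply the quotient $M_T\onto \rH^{\FI}_0(M)_T$), giving a natural map $V(\innerpull{M})\to \rL\rH^{\FI}(M)$ in the derived category once we know both sides are triangulated functors.

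\textbf{Step 2: pass to the derived category.} By Lemma~\ref{lem:construction}, $M\mapsto V(\innerpull{M})$ is a triangulated functor $\rD^+(\FIMod)\to\rD^+(\FIMod)$, and $\rL\rH^{\FI}$ is triangulated by construction. The augmentation above gives a natural transformation $\eta\colon V\circ\innerpull{(-)}\Rightarrow\rL\rH^{\FI}$ of triangulated functors. For $M$ a bounded-below complex, replace $M$ by a quasi-isomorphic bounded-below complex $P_\bullet$ of projective (hence induced) $\FI$-modules; since $V$ sends quasi-isomorphisms to quasi-isomorphisms (Lemma~\ref{lem:construction}) and $\innerpull{(-)}$ is exact, $V(\innerpull{M})\iso V(\innerpull{P_\bullet})$, and the latter is the total complex of the double complex $V(\innerpull{P_q})_p$. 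By Step~1, each column $V(\innerpull{P_q})$ is quasi-isomorphic via $\eta$ to $\rH^{\FI}_0(P_q)$ concentrated in degree $0$; a spectral sequence comparison (exactly the vertical-grading spectral sequence used in Lemma~\ref{lem:construction}) then shows $\eta$ induces a quasi-isomorphism $V(\innerpull{P_\bullet})\xrightarrow{\sim}\rH^{\FI}_0(P_\bullet)=\rL\rH^{\FI}(M)$. This proves $V(\innerpull{M})\iso\rL\rH^{\FI}(M)$ naturally in $\rD^+(\FIMod)$.

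\textbf{Main obstacle.} The substantive point is Step~1: verifying that $V(\innerpull{\cI(W)})$ is acyclic away from degree zero and computes $\rH^{\FI}_0$ there. This is where the combinatorics of the arrow category interacts with the simplicial differential on $\orient_R$, and where one must be careful that the $\FIarrow$-module $\innerpull{\cI(W)}$ has value $\cI(W)_S$ (depending only on the source $S$), so that adding a point to $S$ from $R$ really is the simplicial face map on an auxiliary simplex labeled by $T\setminus S$. Everything else — the triangulated-functor formalism and the spectral sequence collapse — is routine once this local computation and the explicit sign conventions of \cite[Construction~5.7]{castelnuovo-regularity} are in hand.
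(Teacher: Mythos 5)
Your proposal is correct and takes essentially the same route as the paper: the paper's proof is simply a citation of \cite[Proof of Theorem C]{castelnuovo-regularity} and \cite[Proposition~5.10]{castelnuovo-regularity}, and those references contain precisely your Step~1 (the complex $V(\innerpull{\cI(W)})$ is the augmented simplex computation, acyclic away from degree zero where it gives $\rH^{\FI}_0$) together with the passage to $\rD^+(\FIMod)$ via projective resolutions. Nothing further is needed.
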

\begin{proof}
It follows from \cite[Proof of Thm C]{castelnuovo-regularity} and \cite[Proposition~5.10]{castelnuovo-regularity}.
\end{proof}

Given an $\FI$-group $\Gamma$, as above we write $\outerpull{\Gamma}$ for the $\FIarrow$-group defined by $\outerpull{\Gamma}_{S\subset T}=\Gamma_T$.
\begin{lemma}
\label{lemma:VAF}
Suppose that $F$ is an $\FIarrow$-module with a left action of $\outerpull{\Gamma}$, and $A$ is an $\FI$-module (or chain complex of $\FI$-modules) with a right action of $\Gamma$. 
\begin{enumerate}
\item The action of $\outerpull{\Gamma}$ on $F$ induces an action of $\Gamma$ on the $\FI$-module $V(F)$.
\item There is a natural isomorphism $V(\outerpull{A}\otimes_{\outerpull{\Gamma}}F)\iso A\otimes_\Gamma V(F)$ as  complexes of $\FI$-modules.
\end{enumerate}
\end{lemma}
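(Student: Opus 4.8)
\textbf{Proof proposal for Lemma~\ref{lemma:VAF}.}

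The plan is to unwind all the definitions carefully and exhibit the two required structures by direct inspection of the summand decomposition \eqref{eq:VFTk}. For Part~(1), I would first observe that an element $\gamma\in\Gamma_T$ is precisely an element of $\outerpull{\Gamma}_{S\subset T}$ for \emph{every} subset $S\subset T$ simultaneously, and that the left $\outerpull{\Gamma}$-action on $F$ therefore gives, for each decomposition $T=S\disjoint R$, a map $\gamma\cdot(-)\colon F_{S\subset T}\to F_{S\subset T}$. Since this action is required to be compatible with $\FIarrow$-morphisms, in particular with the identity inclusions, the action preserves the subset $S$ and hence the summand index $(S,R)$; acting by the identity on the orientation factor $\orient_R$ then defines a self-map of $V(F)_T$. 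I would then check that this commutes with the differential $d$ (which only changes the partition $(S,R)\to(S\disjoint\{r\},R\setminus\{r\})$, and $\gamma$ acts compatibly on both sides because $\outerpull{\Gamma}$ acts $\FIarrow$-equivariantly) and with the $\FI$-structure maps $f_*$ (here one uses that $\outerpull{\Gamma}$ is pulled back along $(T,S)\mapsto T$, so the square relating $\gamma\in\Gamma_T$ and $f_*(\gamma)\in\Gamma_{T'}$ is exactly the naturality square of the $\outerpull{\Gamma}$-action along $f$). This gives a $\Gamma$-action on the $\FI$-module $V(F)$, by which I mean an action of the $\FI$-group $\Gamma$ in the evident sense (each $\Gamma_T$ acts on $V(F)_T$, compatibly with the $\FI$-maps up to the twist by $f_*$ on $\Gamma$).

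For Part~(2), the key point is that $V$ only touches the first tensor factor, so forming $\otimes_{\outerpull\Gamma}$ or $\otimes_\Gamma$ commutes with it summand by summand. Concretely, for a finite set $T$ I would write
\[
\big(V(\outerpull{A}\otimes_{\outerpull\Gamma}F)\big)_T
=\bigoplus_{T=S\disjoint R}\big(\outerpull{A}\otimes_{\outerpull\Gamma}F\big)_{S\subset T}\boxtimes\orient_R
=\bigoplus_{T=S\disjoint R}\big(A_T\otimes_{\Gamma_T}F_{S\subset T}\big)\boxtimes\orient_R,
\]
using $\outerpull{A}_{S\subset T}=A_T$ and $\outerpull{\Gamma}_{S\subset T}=\Gamma_T$ and the fact that the relative tensor product over a group is computed objectwise. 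On the other side,
\[
\big(A\otimes_\Gamma V(F)\big)_T = A_T\otimes_{\Gamma_T}V(F)_T
= A_T\otimes_{\Gamma_T}\Big(\bigoplus_{T=S\disjoint R}F_{S\subset T}\boxtimes\orient_R\Big)
\cong\bigoplus_{T=S\disjoint R}\big(A_T\otimes_{\Gamma_T}F_{S\subset T}\big)\boxtimes\orient_R,
\]
where in the last step I pull the direct sum (and the trivial $\Gamma_T$-module $\orient_R$) out of the tensor product. The two expressions are visibly identical, giving the isomorphism on underlying graded abelian groups for each $T$; I would then check naturality in $T$ (both $\FI$-structure maps are built from the same $f_*$ on the $F$-factor, the same $f_*$ on $\orient$, and the compatible $f_*$ on the $A$-factor) and compatibility with the differentials (again $d$ acts only on the $F\boxtimes\orient$ part, which is untouched by $-\otimes_{\Gamma}A_T$). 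Tracking through these checks shows the isomorphism is one of chain complexes of $\FI$-modules, and when $A=A_\bullet$ is a complex the same argument applies degreewise and commutes with the total-complex formation, since everything is exact in $A$.

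I do not expect a serious obstacle here; this is a bookkeeping lemma. The only mildly delicate point is being careful about \emph{which} copy of $\Gamma$ acts where: the $\outerpull{\Gamma}$-action on $F$ depends only on the target set $T$ of an $\FIarrow$-object (this is exactly why the pullback along $(T,S)\mapsto T$ is the relevant one), and this is precisely what makes the $\Gamma$-action on $V(F)_T$ well defined and compatible with the summand decomposition indexed by subsets $S\subset T$. Once that is said clearly, both parts follow by matching up summands. The one thing worth stating explicitly in the writeup is that $\orient_R$ carries the trivial $\Gamma_T$-action, so it passes freely through $\otimes_{\Gamma_T}$; this is what lets the direct sum and the orientation line commute with the relative tensor product.
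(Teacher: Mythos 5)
Your proposal is correct and matches the paper's proof essentially verbatim: the $\Gamma$-action is defined summand-wise with $\Gamma_T$ acting trivially on $\orient_R$, and Part~(2) follows by commuting the direct sum and the trivial orientation factor past $\otimes_{\Gamma_T}$. The only detail the paper spells out that you leave slightly implicit is why the differential of $V(\outerpull{A}\otimes_{\outerpull{\Gamma}}F)$ is the identity on the $A_T$ factor --- namely that $\outerpull{A}$ is pulled back along $(T,S)\mapsto T$, so the $\FIarrow$-morphisms $\id\colon(S\subset T)\to(S\disjoint\{r\}\subset T)$ act trivially on it --- which is precisely the observation you make in your closing paragraph.
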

\begin{proof}
 Let us first spell out what it means for $\outerpull{\Gamma}$ to act on the $\FIarrow$-module $F$. This means for every $T$ and $S\subset T$ we have an action of the group $\Gamma_T$ on the $\Z$-module $F_{S\subset T}$; and for any $f\colon T\to T'$ and any $S'\supseteq f(S)$ the $\FIarrow$-morphism $f_*\colon F_{S\subset T}\to F_{S'\subset T'}$ is equivariant with respect to the $\FI$-morphism $f_*\colon \Gamma_T\to \Gamma_{T'}$.
 
In particular, the $\FIarrow$-module $\outerpull{A}\otimes_{\outerpull{\Gamma}}F$ is defined by \[(\outerpull{A}\otimes_{\outerpull{\Gamma}}F)_{S\subset T}=\outerpull{A}_{S\subset T}\otimes_{\Gamma_{S\subset T}}F_{S\subset T}=A_T\otimes_{\Gamma_T}F_{S\subset T}.\]

(1): The claimed action of $\Gamma_T$  on $V(F)_T=\bigoplus_{T=S\disjoint R}F_{S\subset T}\boxtimes \orient_R$
preserves each summand; on the summand $F_{S\subset T}\boxtimes \orient_R$ it acts on $F_{S\subset T}$ by the specified action of $\Gamma_{T}=\outerpull{\Gamma}_{S\subset T}$, and acts on $\orient_R$ by the identity. It is straightforward to check that this commutes with $\FI$-morphisms and the differential.


(2): We have 
\begin{align*}
(A\otimes_\Gamma V(F))_T&=A_T\otimes_{\Gamma_T}\Big(\bigoplus_{T=S\disjoint R} F_{S\subset T}\boxtimes \orient_R\Big)\\
&\iso\bigoplus_{T=S\disjoint R} A_T\otimes_{\Gamma_T}\big(F_{S\subset T}\boxtimes \orient_R\big)\\
&\iso\bigoplus_{T=S\disjoint R}(A_T\otimes_{\Gamma_T}F_{S\subset T})\boxtimes \orient_R\qquad(\ast)\\
&\iso\bigoplus_{T=S\disjoint R}(\outerpull{A}\otimes_{\outerpull{\Gamma}} F)_{S\subset T}\boxtimes \orient_R\qquad\\
&=V(\outerpull{A}\otimes_{\outerpull{\Gamma}} F)_T
\end{align*}
The key isomorphism is $(\ast)$; this holds because the action of $\Gamma_T$ on the summand $F_{T,U}\boxtimes \orient_R$ of $V(F)$ is trivial on the $\orient_R$ factor. To see that this isomorphism commutes with the differential, recall that the differential of $V(\outerpull{A}\otimes_{\outerpull{\Gamma}} F)$ is the alternating sum of maps induced by $(\outerpull{A}\otimes_{\outerpull{\Gamma}} F)_{S\subset T}\to (\outerpull{A}\otimes_{\outerpull{\Gamma}} F)_{S\disjoint\{r\}\subset T}$. But since $\outerpull{A}$ is pulled back from an $\FI$-module, $\FIarrow$-morphisms of the form $\id\colon (S\subset T)\to (S\disjoint\{r\}\subset T)$ act by the \emph{identity} on $\outerpull{A}$. So the differential is the identity on the $A_T$ factor of $(\outerpull{A}\otimes_{\outerpull{\Gamma}} F)_{S\subset T}\iso A_T\otimes_{\Gamma_T}F_{S\subset T}$, just as we need.
The functoriality of $(\ast)$ guarantees that this holds for a chain complex $A$ of $\FI$-modules as well.
\end{proof}

\subsection{The complex $X_\Gamma$ and the proof of Proposition~\ref{prop:homologyofchains}}
We can now define the complex $X_\Gamma$. Suppose that $\Gamma$ is an injective $\FI$-group. For any $S\subset T$, we can identify $\Gamma_S$ with the subgroup $i_*(\Gamma_S)\subset \Gamma_T$, where $i\colon S\into T$ is the inclusion. We will denote this subgroup simply by $\Gamma_S\subset \Gamma_T$.

\begin{definition}
\label{def:XGamma}
We define the $\FIarrow$-module $F_\Gamma$ as follows. On objects, $(F_\Gamma)_{S\subset T}\coloneq \Z[\Gamma_T/\Gamma_S]$. Given a morphism $f\colon T\to T'$ with $f(S)\subset S'$, we define $f_*\colon (F_{\Gamma})_{S\subset T}\to (F_{\Gamma})_{S'\subset T'}$ to be the map induced by $f_*\colon \Z[\Gamma_T]\to \Z[\Gamma_{T'}]$; since $f(S)\subset S'$, we have $f(\Gamma_S)\subset \Gamma_{S'}$, so this is well-defined.

We define the complex $X_\Gamma$ of $\FI$-modules to be $X_\Gamma\coloneq V(F_{\Gamma})$.
The obvious left action of $\Gamma_T$ on $(F_{\Gamma})_{S\subset T}$ induces an action of $\outerpull{\Gamma}$ on $F_{\Gamma}$, so by Lemma~\ref{lemma:VAF}(1) it induces an action of $\Gamma$ on the chain complex of $\FI$-modules $X_\Gamma=V(F_{\Gamma})$.

Explicitly, the complex $X_\Gamma$ is given in homological degree $k$ by
\begin{equation}
\label{eq:XGammadef}
((X_\Gamma)_T)_k=\bigoplus_{\substack{T=S\disjoint R\\\abs{R}=k}}\Z[\Gamma_T/\Gamma_S]\boxtimes \orient^k_R,
\end{equation}
with $\Gamma_T$ preserving each summand, and acting only on $\Z[\Gamma_T/\Gamma_S]$.
\end{definition}

\begin{proof}[Proof of Proposition~\ref{prop:homologyofchains}]
First, let us show that there is a natural quasi-isomorphism between the complexes of $\FIarrow$-modules 
\begin{equation}
\label{eq:qiCGammaEGamma}
\innerpull{C_\bullet \Gamma}\xrightarrow{\iso} \outerpull{E_\bullet \Gamma}\otimes_{\outerpull{\Gamma}} F_\Gamma.
\end{equation}
Indeed, on objects we have by definition
\begin{align*}
(\innerpull{C_\bullet \Gamma})_{S\subset T}=C_\bullet \Gamma_S&=E_\bullet \Gamma_S\otimes_{\Gamma_S} \Z\\(\outerpull{E_\bullet \Gamma}\otimes_{\outerpull{\Gamma}} F_\Gamma)_{S\subset T}\phantom{\,=C_\bullet\Gamma_S}&=E_\bullet \Gamma_T\otimes_{\Gamma_T} \Z[\Gamma_T/\Gamma_S].
\end{align*}
The desired map from the former to the latter is induced by the inclusion $\Gamma_S\into \Gamma_T$, and is a quasi-isomorphism by Shapiro's lemma.

We now finish the proof:
\begin{align*}\rL \rH^{\FI}(C_\bullet \Gamma)
&\iso V(\innerpull{C_\bullet \Gamma})&&\text{by Lemma~\ref{lemma:VMbar-homology}} \\
&\iso V(\outerpull{E_\bullet \Gamma}\otimes_{\outerpull{\Gamma}} F_\Gamma)&&\text{by \eqref{eq:qiCGammaEGamma} and Lemma~\ref{lem:construction}}\\
&\iso E_\bullet \Gamma \otimes_\Gamma V(F_\Gamma)&&\text{by Lemma~\ref{lemma:VAF}(2)}\\
&=E_\bullet \Gamma \otimes_\Gamma X_\Gamma&&\text{by definition of $X_\Gamma$}
\end{align*}
This demonstrates that $\rL \rH^{\FI}(C_\bullet \Gamma)\iso E_\bullet \Gamma \otimes_\Gamma X_\Gamma$, as claimed.
\end{proof}

\subsection{Explicit description of $X_\Gamma$}
\label{subsec:explicit-description}
In this section we show that for most $\FI$-groups, the chain complex $X_\Gamma$ can be realized by a concrete $\FI$-simplicial complex.
\begin{definition}
\label{def:YGamma}
For any injective $\FI$-group $\Gamma$, we define an $\FI$-simplicial complex $Y_\Gamma$ with an action of $\Gamma$ as follows.
\begin{itemize}
\item The vertices of the simplicial complex $(Y_\Gamma)_T$ are pairs $(\{t\},\proj{\gamma}\in \Gamma_T/\Gamma_{T\setminus\{t\}})$. 
\item Every maximal simplex of $(Y_{\Gamma})_T$ is $(\abs{T}-1)$-dimensional. For every $\gamma\in \Gamma_T$, the $\abs{T}$ vertices $(\{t\},\pi_t(\gamma))$ form a $(\abs{T}-1)$-simplex, where $\pi_t$ is the canonical projection $\pi_t\colon \Gamma_T\twoheadrightarrow \Gamma_T/\Gamma_{T\setminus\{t\}}$.
\end{itemize}
To define the $\FI$-structure and $\Gamma$-action, it suffices to define them on vertices (and check that simplices are preserved).  We must define for each $\FI$-morphism $f\colon T\into T'$ a map $f_*$ from the vertices of $(Y_{\Gamma})_T$ to the vertices of $(Y_{\Gamma})_{T'}$, and for each $g\in \Gamma_T$ a map from the vertices of $(Y_{\Gamma})_T$ to itself. Given $\gamma\in \Gamma_T$, we define \begin{align*}
f_*(\{t\},\proj{\gamma})&=(\{f(t)\},\proj{f_*(\gamma)}\in \Gamma_{T'}/\Gamma_{T'\setminus \{t'\}})\\
g\cdot (\{t\},\proj{\gamma})&=(\phantom{f}\{t\}\phantom{()},\phantom{f}\proj{g\gamma}\phantom{|_*}\!\in\Gamma_{T}/\Gamma_{T\setminus \{t\}}\ \ \,)
\end{align*}
\end{definition}
Observe that $Y_\Gamma$ has a canonical dimension-preserving projection $Y_\Gamma\to\Delta^{\bullet-1}$ given by $(\{t\},\proj{\gamma})\mapsto \{t\}$. Here $\Delta^{\bullet-1}$ is the FI-simplicial complex which assigns to a set $T$ the complete simplex $\Delta^{T-1}$ on vertex set $T$. Furthermore, this projection realizes the quotient by the action of $\Gamma$, giving an identification $\Gamma\backslash Y_\Gamma\iso\Delta^{\bullet-1}$. This can be seen as follows.

Our definition of $(Y_\Gamma)_T$ implies that a collection $\sigma=\{(\{t_1\},\proj{\gamma_1}),\ldots,(\{t_k\},\proj{\gamma_k})\}$ forms a $(k-1)$-simplex if and only if the elements $t_1,\ldots,t_k\in T$ are all distinct, and there exists $\gamma\in \Gamma$ such that $\pi_{t_i}(\gamma)=\proj{\gamma_i}\in \Gamma_T/\Gamma_{T\setminus\{t_i\}}$ for all $i=1,\ldots,k$. This last condition can be rephrased as saying that there exists $\gamma\in \Gamma_T$ such that $\sigma=\gamma\cdot \{(\{t_1\},\proj{\id}),\ldots,(\{t_k\},\proj{\id})\}$. Therefore two simplices of $(Y_\Gamma)_T$ are in the same $\Gamma_T$-orbit if and only if they project to the same simplex $R=\{t_1,\ldots,t_k\}$ of $\Delta^{T-1}$, as claimed.


We now need the following condition on an $\FI$-group.
\begin{definition}
An injective $\FI$-group $\Gamma$ is \emph{saturated} if  for any $S_1\subset T$ and $S_2\subset T$, \[\Gamma_{S_1}\cap \Gamma_{S_2}=\Gamma_{S_1\cap S_2}\]
as subgroups of $\Gamma_T$.
\end{definition}
By induction, this implies that $\bigcap \Gamma_{S_i}=\Gamma_{\bigcap S_i}$ for any finite collection of subsets $S_i$.

\begin{remark} 
The same definition makes sense for $\FI$-modules (being abelian $\FI$-groups). An $\FI$-module $M$ is saturated if and only if \[\rH^0_{\fm}(M)=\rH^1_{\fm}(M)=0,\] as we now explain. By definition an $\FI$-module $M$ is injective as an $\FI$-group if and only if $M$ is torsion-free, i.e.\ if $\rH^0_{\fm}(M)=0$.
	
We next claim that under this assumption the condition $M_{S_1}\cap M_{S_2}=M_{S_1\cap S_2}$ holds if and only if the cokernel $\iterDelta_n M$ of the natural map $M \to \Sigma^n M$ is torsion-free for each $n \ge 0$. To see this equivalence, let $S_1, S_2$ and $T$ be sets such that $T = S_1 \cup S_2$ and $\abs{T\setminus S_1}=n$. Then the map $M \to \Sigma^n M$ in degrees $\abs{S_1 \cap S_2}$ and $\abs{S_1}$ leads to the commutative diagram  \begin{displaymath}
	\begin{tikzcd}
	M_{S_1}  \ar[hookrightarrow]{r} & (\Sigma^n M)_{S_1} \iso M_{T}  \ar{r}&(\iterDelta_n M)_{S_1}\ar{r}&0\\
	M_{S_1 \cap S_2}  \ar[hookrightarrow]{r} \ar[hookrightarrow]{u} &  (\Sigma^n M)_{S_1 \cap S_2} \iso M_{S_2} \ar{r}\ar[hookrightarrow]{u} &(\iterDelta_n M)_{S_1\cap S_2}\ar{r}\ar{u}&0
	\end{tikzcd}
	\end{displaymath} where the vertical maps are induced by the $\FI$-morphism $S_1 \cap S_2 \to S_1$ and horizontal maps are components of the natural map $M \to \Sigma^n M$. The vertical map $(\iterDelta_n M)_{S_1\cap S_2}\to (\iterDelta_n M)_{S_1}$ is thus injective if and only if $M_{S_1}\cap M_{S_2}=M_{S_1\cap S_2}$, as claimed.
	
By \cite[Proposition~1.1]{djament} and \cite[p.371, Corollaire]{gabriel}, this implies that $M$ is saturated if and only if  $\rH^0_{\fm}(M)=\rH^1_{\fm}(M)=0$. In particular, our definition of saturation agrees with the classical one; see \cite[III \S 2]{gabriel}. 
\end{remark}

It seems that ``natural'' examples of injective $\FI$-groups are almost always saturated; this includes $\GL(\Z)$, $\Aut(F_\bullet)$, and their congruence subgroups. An example of an injective $\FI$-group that is \emph{not} saturated would be the sub-$\FI$-group $\Gamma\subset \GL(\Z)$ defined by \[\Gamma_n=\begin{cases}\GL_n(\Z)&\text{if $n\geq 5$}\\1&\text{if $n<5$}\end{cases}\]
Any injective $\FI$-group $\Gamma$ is contained in a saturated $\FI$-group $\Gamma'$ defined by taking \[\Gamma'_T\coloneq \colim \left( \left(\Gamma_{T\disjoint [0]} \right)^{S_{[0]}} \m \left(\Gamma_{T\disjoint [1]}\right)^{S_{[1]}} \m \left(\Gamma_{T\disjoint [2]}\right)^{S_{[2]}} \m \ldots \right).\]

\begin{proposition}
\label{prop:XGammaYGamma}
If $\Gamma$ is a saturated $\FI$-group, then $X_\Gamma[1]$ is the reduced cellular chain complex of the $\FI$-simplicial complex $Y_\Gamma$.
\end{proposition}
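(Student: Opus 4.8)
The plan is to build an explicit isomorphism of complexes of $\FI$-modules with $\Gamma$-action between $X_\Gamma[1]$ and the reduced simplicial chain complex $\widetilde{C}_\bullet(Y_\Gamma)$, matching summands, differentials, the $\FI$-structure, and the $\Gamma$-action. The crux is a bijection, for each finite set $T$ and each subset $R\subseteq T$ with $\abs{R}=k$ (write $S=T\setminus R$), between the set of $(k-1)$-simplices of $(Y_\Gamma)_T$ whose vertices project to $R$ under $Y_\Gamma\to\Delta^{\bullet-1}$ and the coset space $\Gamma_T/\Gamma_S$. By the discussion after Definition~\ref{def:YGamma}, such a simplex is precisely $\{(\{r\},\pi_r(\gamma)):r\in R\}$ for some $\gamma\in\Gamma_T$, and two elements $\gamma,\gamma'\in\Gamma_T$ produce the same simplex if and only if $\pi_r(\gamma)=\pi_r(\gamma')$ for every $r\in R$, i.e.\ if and only if $\gamma^{-1}\gamma'\in\bigcap_{r\in R}\Gamma_{T\setminus\{r\}}$. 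This is the one place the hypothesis is used: saturation of $\Gamma$ gives (by induction on $\abs{R}$) that $\bigcap_{r\in R}\Gamma_{T\setminus\{r\}}=\Gamma_{\bigcap_{r\in R}(T\setminus\{r\})}=\Gamma_{S}$, so $\gamma\mapsto\{(\{r\},\pi_r(\gamma)):r\in R\}$ descends to a bijection $\Gamma_T/\Gamma_S\xrightarrow{\ \sim\ }\{(k-1)\text{-simplices over }R\}$. I expect this intersection identity to be the main (and essentially the only) genuinely nontrivial step; everything else is bookkeeping.

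Next I would assemble these bijections into a degreewise identification. The $(k-1)$-simplices over $R$ contribute to $\widetilde C_{k-1}((Y_\Gamma)_T)$ the direct sum over such simplices $\sigma$ of the orientation module $\orient_\sigma$ of the vertex set of $\sigma$; since $Y_\Gamma\to\Delta^{\bullet-1}$ restricts to a bijection from the vertices of $\sigma$ onto $R$, there is a canonical isomorphism $\orient_\sigma\cong\orient^k_R$. Combining this with the bijection above yields
\[
\widetilde C_{k-1}\big((Y_\Gamma)_T\big)\ \cong\ \bigoplus_{\substack{T=S\disjoint R\\ \abs{R}=k}}\Z[\Gamma_T/\Gamma_S]\boxtimes\orient^k_R\ =\ \big((X_\Gamma)_T\big)_k,
\]
which is exactly $X_\Gamma[1]$ placed in homological degree $k-1$. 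In degree $-1$ this reads $\widetilde C_{-1}=\Z$, the augmentation term coming from the empty simplex, matching $((X_\Gamma)_T)_0=\Z[\Gamma_T/\Gamma_T]=\Z$.

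It then remains to check that this identification respects all the extra structure, which is routine unwinding of definitions. For an injection $f\colon T\into T'$, both $X_\Gamma$ and $\widetilde C_\bullet(Y_\Gamma)$ send the summand indexed by $R$ to the one indexed by $f(R)$, acting by $f_*$ on $\Z[\Gamma_\bullet/\Gamma_\bullet]$ and by the induced bijection $R\to f(R)$ on orientation modules. For $g\in\Gamma_T$, both sides act by left translation on $\Z[\Gamma_T/\Gamma_S]$ and trivially on $\orient^k_R$ (the projection of a simplex to its image $R$ is $g$-invariant). For the differential, the component of $d_{X_\Gamma}=d_{V(F_\Gamma)}$ indexed by $r\in R$ is the natural projection $\Z[\Gamma_T/\Gamma_S]\to\Z[\Gamma_T/\Gamma_{S\disjoint\{r\}}]$ tensored with the contraction $\orient^k_R\to\orient^{k-1}_{R\setminus\{r\}}$; under the identification this is precisely the $r$-th face map of $Y_\Gamma$, since deleting the vertex $(\{r\},\pi_r(\gamma))$ from the simplex over $R$ attached to $\proj{\gamma}$ gives the simplex over $R\setminus\{r\}$ attached to the image of $\proj{\gamma}$ in $\Gamma_T/\Gamma_{T\setminus(R\setminus\{r\})}=\Gamma_T/\Gamma_{S\disjoint\{r\}}$, carrying the standard simplicial sign. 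Summing over $r\in R$ identifies $d_{X_\Gamma}$ with the reduced simplicial boundary of $Y_\Gamma$.

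Putting these together shows $X_\Gamma[1]\cong\widetilde C_\bullet(Y_\Gamma)$ as complexes of $\FI$-modules carrying a $\Gamma$-action, as desired. For the sign conventions on $V$ and the verification that $d^2=0$ I would simply cite the detailed treatment in \cite{castelnuovo-regularity} rather than redo it. The only subtlety beyond the injectivity of $\Gamma$ already assumed throughout is the saturation step in the first paragraph; without it the map $\Gamma_T/\Gamma_S\to\{\text{simplices over }R\}$ is still surjective but need not be injective, which is exactly why $X_\Gamma[1]$ computes $\widetilde C_\bullet(Y_\Gamma)$ only in the saturated case.
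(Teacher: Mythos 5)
Your proof is correct and follows essentially the same route as the paper: the paper likewise identifies the $(k-1)$-simplices of $(Y_\Gamma)_T$ lying over a $k$-element subset $R$ with $\Gamma_T/\Gamma_{T\setminus R}$ via the saturation identity $\bigcap_{r\in R}\Gamma_{T\setminus\{r\}}=\Gamma_{T\setminus R}$, and then reads off the isomorphism of chain complexes summand by summand. You simply spell out the compatibility of differentials, $\FI$-structure, and $\Gamma$-action in more detail than the paper, which declares these checks clear.
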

\begin{proof}
The key to this proposition is that \emph{when $\Gamma$ is saturated}, the $\FI$-simplicial complex $Y_\Gamma$ admits the following alternative description:

The $(k-1)$-simplices of $(Y_{\Gamma})_T$ are in bijection with pairs $(R,\proj{\gamma}\in \Gamma_T/\Gamma_{T\setminus R})$ where $R\subset T$ and $\abs{R}=k$. The simplex $(R',\proj{\gamma'})$ is contained in the simplex $(R,\proj{\gamma})$ if and only if $R'\subset R$ and $\proj{\gamma'}$ is the image of $\proj{\gamma}$ under the projection $\Gamma_T/\Gamma_{T\setminus R}\twoheadrightarrow \Gamma_T/\Gamma_{T\setminus R'}$.

\medskip
To verify this description, consider a simplex $\sigma=\{(\{r_1\},\proj{\gamma_1}),\ldots,(\{r_k\},\proj{\gamma_k})\}$ of $Y_\Gamma$ lying above $R=\{r_1,\ldots,r_k\}\subset T$. By definition, for $\sigma$ to be a simplex means there exists $\gamma\in \Gamma_T$ with $\pi_{r_i}(\gamma)=\proj{\gamma_i}\in \Gamma_T/\Gamma_{T\setminus\{r_i\}}$ for all $i$. This element $\gamma$ is not unique; it is only well-defined modulo the intersection $\bigcap_{i=1}^k \Gamma_{T\setminus \{r_i\}}$. But the hypothesis that $\Gamma$ is saturated guarantees that \[\bigcap_{i=1}^k\Gamma_{T\setminus \{r_i\}}=\Gamma_{\bigcap_{i=1}^kT\setminus \{r_i\}}=\Gamma_{T\setminus R}.\] Therefore to every simplex $\sigma$ of $Y_\Gamma$ above $R\subset T$ determines an element $\proj{\gamma}\in \Gamma_T/\Gamma_{T\setminus R}$, and the containment relation is as described.

From this description the identification is clear: the $(k-1)$-simplices of $Y_\Gamma$ are labeled by
\begin{equation}
\label{eq:YGammasimplices}
((Y_\Gamma)_T)_{k-1}=\bigsqcup_{\substack{R\subset T\\\abs{R}=k}}\Gamma_T/\Gamma_{T\setminus R}\phantom{\boxtimes \orient^k_R}.
\end{equation}
(This is correct even for $k=0$ if we consider $Y_\Gamma$ to have a single $(-1)$-simplex.) Therefore the cellular chain complex of $Y_\Gamma$ has
\[\widetilde{C}_{k-1}(Y_\Gamma)_T=\bigoplus_{\substack{R\subset T\\\abs{R}=k}}\Z[\Gamma_T/\Gamma_{T\setminus R}]\boxtimes \orient^k_R.\]
Here $\orient^k_R$ arises as the orientation (or fundamental class) of the simplices; it records the fact that $\FI$-morphisms can reverse the orientation of simplices.  Comparing with the explicit description of $X_\Gamma$ in \eqref{eq:XGammadef}, we find
\[((X_\Gamma)_T)_k=\bigoplus_{\substack{T=S\disjoint R\\\abs{R}=k}}\Z[\Gamma_T/\Gamma_S]\boxtimes \orient^k_R\]
from which the desired identification $\widetilde{C}_{k-1}(Y_\Gamma)\iso (X_\Gamma)_k$ is clear. (Note the shift in indexing, which is why $\widetilde{C}_\bullet(Y_\Gamma)$ is isomorphic to the shifted $X_\Gamma[1]$ rather than $X_\Gamma$.)
\end{proof}
\begin{remark}
If $\Gamma$ is not saturated, $(X_\Gamma)_T$ is still the shifted chain complex of a semi-simplicial set; what goes wrong is that e.g.\ multiple edges may have the same endpoints, so this fails to be a simplicial complex.
\end{remark}

\section{Congruence subgroups and complexes of split partial bases} \label{secSPB}
\subsection{The $\FI$-simplicial complex $Y_\Gamma$ for congruence $\FI$-groups}
\label{sec:describecomplex}
In this section, we give explicit and familiar descriptions of the $\FI$-simplicial complex $Y_\Gamma$ for the concrete $\FI$-groups $\GL(R)$ and its congruence subgroups $\GL(R,I)$. It turns out that for proper ideals $I$, this complex coincides with  a natural ``complex of mod-$I$ split partial bases'' $\SPB_n(R,I)$. (This is closely related to the complexes of split unimodular sequences considered by Charney~\cite{Charney} and Putman~\cite{Pu}, as we will explain below.) However there is a subtle point when the ideal $I$ is equal to $R$, in which case the group $\GL_n(R,I)$ is simply $\GL_n(R)$. In this case the complex $Y_\Gamma$ does \emph{not} coincide with the complex of split partial bases $\SPB_n(R)=\SPB_n(R,I)$, but is instead a \emph{slightly different} complex. Making this subtlety clear is one of the main reasons for writing this section.

Before dealing with ideals at all, we define the complex of split unimodular collections $\SU_n(R)$ and the complex of split partial bases $\SPB_n(R)$.

\begin{definition}
\label{def:SU}
The vertices of the complex of split unimodular collections $\SU_n(R)$ are pairs $(v\in R^n,g\colon R^n\onto R)$ with $g(v)=1$. A collection $\{(v_1,g_1),\ldots,(v_k,g_k)\}$ forms a $(k-1)$-simplex of $\SU_n(R)$ if and only if $v_1,\ldots,v_k$ are linearly independent and $g_i(v_j)=\delta_{ij}$.
\end{definition}
Note that the $(n-1)$-simplices of $\SU_n(R)$ are in bijection with unordered bases $\{v_1,\ldots,v_n\}$ of $R^n$ (since the maps $g_i$ are then determined by the formula $g_i(v_j)=\delta_{ij}$).

\begin{definition}
\label{def:SPB}
The complex of split partial bases $\SPB_n(R)$ is the subcomplex of $\SU_n(R)$ defined as follows. A simplex of $\SU_n(R)$ belongs to $\SPB_n(R)$ if and only if it is contained in an $(n-1)$-simplex.
\end{definition}

\begin{remark}
For $R=\Z$ the complexes $\SPB_n(\Z)$ and $\SU_n(\Z)$ are actually equal, but this is not true for all rings $R$. For example, if $R=\R[x,y,z]/(x^2+y^2+z^2-1)$, the vector $v=(x,y,z)\in R^3$ and $g\colon (a,b,c)\mapsto ax+by+cz$ define a vertex $(v,g)\in \SU_3(R)$. But this vertex cannot belong to $\SPB_3(R)$ because the kernel of $g$ is not free (a basis for $\ker g$ would define a trivialization of the tangent bundle of the 2-sphere).
\end{remark}

To define the mod-$I$ variants $\SU_n(R,I)$ and $\SPB_n(R,I)$ we must fix a standard basis $e_1,\ldots,e_n$ for $R^n$. Let $\lambda_1,\ldots,\lambda_n\colon R^n\to R$ be the dual basis.
\begin{definition}
The complex of mod-$I$ split unimodular collections  $\SU_n(R,I)$ is the full subcomplex of $\SU_n(R)$ on vertices $(v,g)$ for which there exists $i\in [n]$ such that $v\equiv e_i\bmod{I}$ and $g\equiv \lambda_i\bmod{I}$.
\end{definition}
Note that the $(n-1)$-simplices of $\SU_n(R,I)$ are in bijection with bases $\{v_1,\ldots,v_n\}$ of $R^n$ such that $v_i\equiv e_i\bmod{I}$ for all $i=1,\ldots,n$.

\begin{definition}
\label{def:SPB-modI}
The complex of mod-$I$ split partial bases $\SPB_n(R,I)$ is the subcomplex of $\SU_n(R,I)$ defined as follows. A simplex of $\SU_n(R,I)$ belongs to $\SPB_n(R,I)$ if and only if it is contained in an $(n-1)$-simplex of $\SU_n(R,I)$.
\end{definition}

\begin{remark}
\label{remark:transitivityrange}
If $R$ satisfies Bass's stable range condition $\SR_{d+2}$, then for all $\ell\leq n-d-2$ the subcomplex $\SPB_n(R,I)$ contains all $\ell$-simplices of $\SU_n(R,I)$ (see \cite[Lemma~3.2]{Pu} or \cite[Proposition on p.~2101]{Charney}, but note a typo in the former). In particular, the inclusion $\SPB_n(R,I)\into \SU_n(R,I)$ is $(n-d-2)$-connected.
\end{remark}
\begin{remark}
\label{remark:SPBtilde}
Putman works with $\SU_n(R,I)$ in \cite[third Definition in \S3]{Pu} (but beware an inaccurate reference to it as the complex of split partial bases).
Charney does not work directly with the simplicial complex $\SU_n(R,I)$, but rather with a semi-simplicial set $\SUtilde_n(R,I)$ whose simplices are \emph{ordered} sequences $((v_1,g_1),\ldots,(v_k,g_k))$. Nevertheless there is a surjection $\pi\colon \SUtilde_n(R,I)\to \SU_n(R,I)$ which admits a section (see \cite[Proof of Lemma 3.1]{Pu}), which will be enough for us.
\end{remark}

We can view these simplicial complexes as forming $\FI$-simplicial complexes as follows.
\begin{definition}
The $\FI$-simplicial complex $\SU(R)$ is defined as follows. For each $n$ we set $\SU_{[n]}(R)=\SU_n(R)$. An inclusion $f\colon [n]\into [m]$ induces maps $f_*\colon R^n\into R^m$ (defined by $f_*(e_i)=e_{f(i)}$) and $f^*\colon R^m\onto R^n$ (defined by $f^*(e_{f(i)})=e_i$ and $f^*(e_j)=0$ for $j\notin \im f$). We define the structure map $f_*\colon \SU_n(R)\to \SU_m(R)$ by sending a vertex $(v,g)\in \SU_n(R)$ to $(f_*(v),f^*\circ g)\in \SU_m(R)$. This preserves the condition defining simplices of $\SU_n(R)$, so extends to an injection of simplicial complexes.

These structure maps $f_*\colon \SU_n(R)\to \SU_m(R)$ preserve the subcomplexes $\SPB_n(R)$, $\SU_n(R,I)$, and $\SPB_n(R,I)$. Therefore we obtain $\FI$-simplicial complexes $\SPB(R)$, $\SU(R,I)$, and $\SPB(R,I)$.
\end{definition}

We can now describe the $\FI$-simplicial complex $Y_{\GL(R,I)}$ defined in Definition~\ref{def:YGamma}.
\begin{proposition}
\label{prop:YGammaSPB}
If $I\subset R$ is a \emph{proper} ideal, $Y_{\GL(R,I)}$ is isomorphic as a $\GL(R,I)$-equivariant $\FI$-simplicial complex to $\SPB(R,I)$.
\end{proposition}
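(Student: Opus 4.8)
The plan is to write down an explicit $\GL(R,I)$-equivariant isomorphism of $\FI$-simplicial complexes $\Phi\colon Y_{\GL(R,I)}\xrightarrow{\ \sim\ }\SPB(R,I)$ and check it is well-defined, bijective on vertices, and simplex-preserving in both directions. Write $\Gamma=\GL(R,I)$, so that $\Gamma_T$ is the level-$I$ congruence subgroup of $\GL(R^T)$ and, for $S\subset T$, $\Gamma_S\subset\Gamma_T$ is the subgroup of matrices acting as the identity on the coordinates in $T\setminus S$ (both on vectors and on functionals). The key observation is that for $S=T\setminus\{t\}$ this subgroup $\Gamma_{T\setminus\{t\}}$ is exactly the common stabilizer of $e_t\in R^T$ and $\lambda_t$, so a coset $\proj\gamma\in\Gamma_T/\Gamma_{T\setminus\{t\}}$ \emph{is precisely} the datum of the pair $(\gamma e_t,\ \lambda_t\circ\gamma^{-1})$: this pair depends only on the coset, and two elements with the same pair differ by an element of $\Gamma_{T\setminus\{t\}}$. (We note in passing that $\Gamma$ is saturated, since plainly $\Gamma_{S_1}\cap\Gamma_{S_2}=\Gamma_{S_1\cap S_2}$; this is what will then let one feed the result into Proposition~\ref{prop:XGammaYGamma}.)

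On vertices I would define $\Phi(\{t\},\proj\gamma)=(\gamma e_t,\ \lambda_t\circ\gamma^{-1})$, picking any representative $\gamma$. Since $\gamma\equiv\id\bmod I$, this pair satisfies $\gamma e_t\equiv e_t$, $\lambda_t\gamma^{-1}\equiv\lambda_t$, and $(\lambda_t\gamma^{-1})(\gamma e_t)=1$, so it is a vertex of $\SU_T(R,I)$; it in fact lies in $\SPB_T(R,I)$ because the basis $\{\gamma e_j\}_{j\in T}$ (all vectors $\equiv e_j\bmod I$) spans a $(\abs T-1)$-simplex of $\SU_T(R,I)$ containing it. That $\Phi$ is $\Gamma$-equivariant and $\FI$-equivariant is a direct computation: for $g\in\Gamma_T$ one has $\Phi(g\cdot(\{t\},\proj\gamma))=((g\gamma)e_t,\lambda_t(g\gamma)^{-1})=g\cdot(\gamma e_t,\lambda_t\gamma^{-1})$, which matches the evident $\GL(R,I)$-action $g\cdot(v,h)=(gv,\,h\circ g^{-1})$; and for an $\FI$-morphism $f\colon T\into T'$ one checks $(f_*\gamma)e_{f(t)}=f_*(\gamma e_t)$ and $\lambda_{f(t)}\circ(f_*\gamma)^{-1}=(\lambda_t\gamma^{-1})\circ f^*$ by splitting $f_*\gamma=f_*\gamma f^*+(\id-f_*f^*)$, which is exactly compatibility with the structure maps defining the $\FI$-simplicial complex $\SPB(R,I)$.

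Next, $\Phi$ is a bijection on vertices, and \emph{this is the step where properness of $I$ is essential}. Because $R/I\neq 0$, the images of $e_1,\dots,e_{\abs T}$ remain a basis of $(R/I)^T$, so every vertex $(v,h)$ of $\SU_T(R,I)$ has a well-defined \emph{type} $\tau(v,h)\in T$, namely the unique $t$ with $v\equiv e_t$ and $h\equiv\lambda_t\bmod I$; since $\tau\big(\Phi(\{t\},\proj\gamma)\big)=t$ and the pair $(\gamma e_t,\lambda_t\gamma^{-1})$ recovers the coset, $\Phi$ is injective. For surjectivity, a vertex $(v,h)\in\SPB_T(R,I)$ of type $t$ lies by definition in some $(\abs T-1)$-simplex of $\SU_T(R,I)$, i.e.\ a basis $\{w_j\}_{j\in T}$ with $w_j\equiv e_j\bmod I$, $w_t=v$, and $h$ the dual functional; the matrix $\gamma$ with $\gamma e_j=w_j$ lies in $\Gamma_T$ and satisfies $\Phi(\{t\},\proj\gamma)=(v,h)$. (This is precisely where the argument must break for $I=R$: then $\tau$ is undefined, the pair $(v,h)$ has forgotten the label $\{t\}$, and $\Phi$ fails to be injective — which is why $Y_{\GL(R)}$ is a different, ``labeled'' complex, as the authors flag.)

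Finally, $\Phi$ matches up simplices. By Definition~\ref{def:YGamma} and the discussion following it, $\{(\{t_i\},\proj{\gamma_i})\}_i$ is a simplex of $(Y_\Gamma)_T$ iff the $t_i$ are distinct and there is a \emph{single} $\gamma\in\Gamma_T$ with $\pi_{t_i}(\gamma)=\proj{\gamma_i}$ for all $i$; given such a $\gamma$, the vertices $\Phi(\{t_i\},\proj{\gamma_i})=(\gamma e_{t_i},\lambda_{t_i}\gamma^{-1})$ form a face of the $(\abs T-1)$-simplex $\{(\gamma e_j,\lambda_j\gamma^{-1})\}_{j\in T}$, hence a simplex of $\SPB_T(R,I)$. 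Conversely, a simplex of $\SPB_T(R,I)$ lies in some $(\abs T-1)$-simplex of $\SU_T(R,I)$, i.e.\ a congruence basis $\{w_j\}$ with $w_j\equiv e_j\bmod I$, and the matrix $\gamma$ with $\gamma e_j=w_j$ lies in $\Gamma_T$ and simultaneously lifts all the relevant cosets, so its $\Phi$-preimage is a simplex of $Y_\Gamma$. Thus $\Phi$ is the desired isomorphism. The hard part will not be any single computation but the bookkeeping at two points: identifying $\Gamma_{T\setminus\{t\}}$ as the stabilizer of $(e_t,\lambda_t)$ so that cosets are honestly pairs $(v,h)$, and checking that the defining condition of $\SPB$ — that every simplex lies in a top-dimensional simplex of $\SU_T(R,I)$ — corresponds under $\Phi$ exactly to the existence of one group element lifting a whole collection of cosets at once.
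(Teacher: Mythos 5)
Your proposal is correct and follows essentially the same route as the paper: the same map $(\{t\},\proj{\gamma})\mapsto \gamma\cdot(e_t,\lambda_t)$, the same identification of $\Gamma_{T\setminus\{t\}}$ as the stabilizer of $(e_t,\lambda_t)$, the same use of properness of $I$ (via $e_i\not\equiv e_j\bmod I$) for injectivity on vertices, and the same surjectivity argument through top-dimensional simplices. The extra details you supply (the explicit $\FI$-compatibility computation and the ``type'' function) are consistent with, and slightly more explicit than, the paper's write-up.
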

\begin{proof}
Set $\Gamma=\GL(R,I)$, and recall that the vertices of $(Y_\Gamma)_n$ are pairs $(\{i\in [n]\},\proj{\gamma}\in \Gamma_n/\Gamma_{[n]\setminus\{i\}})$; for readability we write $(i,\proj{\gamma})$ in place of $(\{i\},\gamma)$.
We define the isomorphism $\varphi\colon Y_\Gamma\iso \SPB(R,I)$ on vertices by sending $(i,\proj{\id})\in (Y_\Gamma)_n$ to the standard vertices $x_i=(e_i,\lambda_i)\in \SPB_n(R,I)$. Since this isomorphism is to be $\GL(R,I)$-equivariant, we must take $\varphi(i,\proj{\gamma})=\gamma\cdot x_i$. Note that the stabilizer of $x_i$ is precisely $\Gamma_{[n]\setminus\{i\}}$, so this is well-defined. It also respects the $\FI$-structure maps on vertices.

We now verify that $\varphi$ is a simplicial map. The $(k-1)$-simplices of $(Y_\Gamma)_n$ are precisely those of the form $\gamma\cdot \sigma$ where $\sigma$ is a ``standard'' simplex $\{(i_1,\proj{\id}),\ldots,(i_k,\proj{\id})\}$. Note that $\sigma$ itself is taken to $\varphi(\sigma)=\{x_{i_1},\ldots,x_{i_k}\}$ which is certainly a simplex of $\SPB_n(R,I)$, since it belongs to the standard $(n-1)$-simplex $\{x_1,\ldots,x_n\}$. Since $\GL_n(R,I)$ acts on the simplicial complex $\SPB_n(R,I)$, we conclude that $\varphi(\gamma\cdot \sigma)=\gamma\cdot\varphi(\sigma)$ is a simplex as well. This shows that $\varphi\colon Y_\Gamma\to \SPB(R,I)$ is a map of $\FI$-simplicial complexes. It remains to check that it is an isomorphism.

For a map of simplicial complexes, injectivity can be checked on vertices, and surjectivity can be checked on maximal simplices. The fact that the stabilizer of $x_i$ is $\Gamma_{[n]\setminus \{i\}}$ shows that $\varphi(i,\proj{\gamma})=\varphi(i,\proj{\gamma'})\implies \proj{\gamma}=\proj{\gamma'}$. So to check injectivity on vertices, it suffices to check that we cannot have $\varphi(i,\proj{\gamma})=\varphi(j,\proj{\gamma'})$ when $i\neq j$. This is where we will use that $I$ is a \emph{proper} ideal. By definition $\varphi(i,\proj{\gamma})=\gamma\cdot (e_i,\lambda_i)=(\gamma\cdot e_i,\gamma^*\lambda_i)$, and since $\gamma\in \Gamma_n=\GL_n(R,I)$ we know that $\gamma\cdot e_i\equiv e_i\bmod{I}$. The key point is that since $I$ is a \emph{proper} ideal, $e_i\not\equiv e_j\bmod{I}$ for $i\neq j$. Therefore $\gamma\cdot e_i$ cannot coincide with $\gamma\cdot e_j$, verifying injectivity on vertices.

For surjectivity, note that the maximal simplices $\sigma$ of $\SPB_n(R,I)$ are $(n-1)$-dimensional, and correspond to bases $\{v_1,\ldots,v_n\}$ of $R^n$ such that for each $i$ there exists $j_i\in [n]$ such that $v_i\equiv e_{j_i}\bmod{I}$. Since $I$ is proper, after reordering we can guarantee that $v_i\equiv e_i\bmod{I}$. But then the vectors $v_i$ together define a matrix $\gamma\in \GL_n(R,I)$ such that $\gamma\cdot \{x_1,\ldots,x_n\}=\sigma$, and thus $\sigma=\varphi(\{(1,\proj{\id}), \ldots,(n,\proj{\id})\}$. This concludes the proof.
\end{proof}

\begin{remark}The assumption that $I$ is a proper ideal is really necessary here. If we attempt to carry out the same comparison when $I=R$, we encounter a discrepancy. Note that in this case $\GL(R,I)=\GL(R)$ and $\SPB(R,I)=\SPB(R)$.

The first key observation is that $\GL_n(R)$ acts transitively on the $\ell$-simplices of $\SPB_n(R)$ for all $\ell$. Indeed, since every simplex is contained in an $(n-1)$-simplex, and $\GL_n(R)$ acts transitively on these, it suffices to check that $\GL_n(R)$ acts transitively on the $\ell$-simplices contained in a single maximal simplex. But for this we need only the permutation matrices. (These would be excluded from $\GL_n(R,I)$ if $I$ were a proper ideal!)

On the other hand, we may take $\Gamma=\GL(R)$ and consider the simplicial complex $Y_n=(Y_\Gamma)_{[n]}$. But in contrast, $\Gamma_n$ cannot act transitively on $\ell$-simplices of $Y_n$; indeed by definition, the $\Gamma_n$-orbits of $(k-1)$-simplices of $Y_n$ are in bijection with $k$-element subsets of $[n]$. So $\SPB_n(R)$ definitely \emph{cannot} coincide with $Y_n$.

We can nevertheless describe $Y_n$: the simplices of $Y_n$ correspond to tuples $(S\subset [n], f\colon RS\into R^n, g\colon R^n\onto RS)$  such that $g\circ f =\id$. In particular, the vertices of $Y_n$ correspond to tuples $({i\in [n]}, f\colon R\{i\}\into R^n, g\colon R^n\onto R\{i\})$ such that $g\circ f = \id$. There is a natural projection $\pi\colon Y_n\onto \SPB_n(R)$ sending a vertex $(i,f,g)$ to $(v=f(1),g)\in \SPB_n(R)$, but it is not injective. A collection of $k$ vertices $\sigma=\{(i_j, f_j, g_j)\}$ forms a $(k-1)$-simplex of $Y_n$ if and only if \emph{the labels $i_1,\ldots,i_k$ are all distinct} and $\pi(\sigma)$ forms a $(k-1)$-simplex of $\SPB_n(R)$.
\end{remark}

\subsection{Bounding the $\FI$-homology of congruence chains}
\label{sec:hyperhomologyvanishescong}
We can now explain how to deduce the necessary bound on the $\FI$-homology of the chains on congruence subgroups from the work of Charney. The argument in this section below partly follows a portion of \cite[Proof of Proposition 5.13]{castelnuovo-regularity}, but both the connection with $\HH_k(C_\bullet\Gamma)$ and the identification with $\SPB(R,I)$ are new.
We begin by proving Theorem~\ref{athm:CGammaSPB}, which connects the $\FI$-homology of congruence subgroups with the equivariant homology of the complex of mod-$I$ split partial bases.
\begin{theoremCrestated}
Given a ring $R$ and a proper ideal $I\subset R$, and any coefficient group $A$, for all $k\geq 0$ we have  \[\HH_k(C_\bullet(\GL(R,I);A))\iso \widetilde{\rH}_{k-1}^{\GL(R,I)}(\SPB(R,I);A).\]
\end{theoremCrestated}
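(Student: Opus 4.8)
The plan is to chain together the three main results established in Sections~\ref{secFIgroup} and~\ref{secSPB}. By Corollary~\ref{cor:homologyofchainscoeff} we have $\HH_k(C_\bullet(\Gamma;A))\iso \rH_k(\Gamma;X_\Gamma\otimes A)$ for any injective $\FI$-group $\Gamma$; applying this with $\Gamma=\GL(R,I)$ reduces the statement to computing the $\GL(R,I)$-equivariant homology of the complex $X_{\GL(R,I)}\otimes A$. The next step is to invoke Proposition~\ref{prop:XGammaYGamma}, which says that for a saturated $\FI$-group $\Gamma$ the shifted complex $X_\Gamma[1]$ is the reduced cellular chain complex of the $\FI$-simplicial complex $Y_\Gamma$; this converts the equivariant homology of $X_{\GL(R,I)}$ into $\widetilde{\rH}^{\GL(R,I)}_{k-1}(Y_{\GL(R,I)};A)$. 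Finally, Proposition~\ref{prop:YGammaSPB} identifies $Y_{\GL(R,I)}$ with $\SPB(R,I)$ as a $\GL(R,I)$-equivariant $\FI$-simplicial complex (using that $I$ is proper), which gives the claimed isomorphism with $\widetilde{\rH}^{\GL(R,I)}_{k-1}(\SPB(R,I);A)$.

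Before this chain goes through, two hypotheses must be verified for $\Gamma=\GL(R,I)$: that it is an \emph{injective} $\FI$-group (so that $X_\Gamma$ and $Y_\Gamma$ are defined at all), and that it is \emph{saturated} (so that Proposition~\ref{prop:XGammaYGamma} applies). Injectivity is clear: each structure map $\GL_n(R,I)\to \GL_m(R,I)$ is block inclusion of matrices and hence injective. For saturation, one must check that for subsets $S_1,S_2\subset [n]$ one has $\GL_n(R,I)_{S_1}\cap \GL_n(R,I)_{S_2}=\GL_n(R,I)_{S_1\cap S_2}$ as subgroups of $\GL_n(R,I)$; here $\GL_n(R,I)_{S}$ denotes the image of $\GL_S(R,I)$, i.e.\ those congruence matrices that act as the identity on the complementary coordinates $[n]\setminus S$. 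A matrix fixing the coordinates outside $S_1$ \emph{and} fixing the coordinates outside $S_2$ fixes everything outside $S_1\cap S_2$, and conversely; this is a direct matrix computation, essentially the observation that the ``support'' of a near-identity matrix behaves like a set intersection. This should be recorded as a short lemma (or folded into the proof).

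Thus the proof is essentially bookkeeping: assemble the isomorphisms
\[
\HH_k(C_\bullet(\GL(R,I);A))
\overset{\text{Cor.~\ref{cor:homologyofchainscoeff}}}{\iso}
\rH_k(\GL(R,I);X_{\GL(R,I)}\otimes A)
\overset{\text{Prop.~\ref{prop:XGammaYGamma}}}{\iso}
\widetilde{\rH}^{\GL(R,I)}_{k-1}(Y_{\GL(R,I)};A)
\overset{\text{Prop.~\ref{prop:YGammaSPB}}}{\iso}
\widetilde{\rH}^{\GL(R,I)}_{k-1}(\SPB(R,I);A),
\]
being careful about the degree shift in the middle step (the complex $X_\Gamma$ sits in homological degree $k$ while $\widetilde{C}_{k-1}(Y_\Gamma)$ is its $(k-1)$st piece, which is exactly the shift $X_\Gamma[1]$ in Proposition~\ref{prop:XGammaYGamma}), and noting that tensoring with $A$ commutes with all the constructions involved since $X_\Gamma$ is termwise free abelian. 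I expect no serious obstacle here: the genuinely substantive content has already been extracted into Propositions~\ref{prop:homologyofchains}, \ref{prop:XGammaYGamma}, and~\ref{prop:YGammaSPB}. The only mild subtlety — and the one place to be careful — is matching up conventions and degree shifts, together with the verification that $\GL(R,I)$ is saturated, which is the hypothesis powering Proposition~\ref{prop:XGammaYGamma}.
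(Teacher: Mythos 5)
Your proof is correct and follows exactly the same chain of isomorphisms as the paper's own proof: Corollary~\ref{cor:homologyofchainscoeff}, then Proposition~\ref{prop:XGammaYGamma}, then Proposition~\ref{prop:YGammaSPB}, with the degree shift accounted for by $X_\Gamma[1]$. Your explicit verification that $\GL(R,I)$ is saturated (and injective) is a welcome addition --- the paper only asserts saturation of congruence subgroups in a remark, and your block-matrix argument for $\Gamma_{S_1}\cap\Gamma_{S_2}=\Gamma_{S_1\cap S_2}$ is the right one.
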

\begin{proof}[Proof of Theorem~\ref{athm:CGammaSPB}]
Let $\Gamma=\GL(R,I)$. Proposition~\ref{prop:homologyofchains} and Corollary~\ref{cor:homologyofchainscoeff} state that $\HH_k(C_\bullet(\Gamma;A))\iso \rH_k(\Gamma;X_\Gamma\otimes A)$.
Proposition~\ref{prop:XGammaYGamma} states that $X_\Gamma[1]$ is the reduced chain complex of $Y_\Gamma$, which is isomorphic to $\SPB(R,I)$ by  Proposition~\ref{prop:YGammaSPB}, so $\rH_k(\Gamma;X_\Gamma\otimes A)$ is the $\Gamma$-equivariant homology \[
\HH_k(C_\bullet(\Gamma;A))\iso \rH_k(\Gamma;X_\Gamma\otimes A)\iso \widetilde{\rH}_{k-1}^{\GL(R,I)}( \SPB(R,I); A).\qedhere\]
\end{proof}

We can now prove Proposition~\ref{hyperhomologyvanishescong}, which was the necessary technical input for Application~\ref{athm:congruence-subgroup}.
\begin{proof}[Proof of Proposition~\ref{hyperhomologyvanishescong}] For readability, we explain the argument with $\Z$ coefficients, but it applies verbatim with arbitrary coefficients.
Charney proved in \cite[Theorem 3.5]{Charney} that $\SUtilde_n(R,I)$ is $q$-acyclic for $n\geq 2q+d+3$.

Our first step is to verify that the same is true of $\SU_n(R,I)$ and $\SPB_n(R,I)$. By Remark~\ref{remark:SPBtilde}, the projection $\SUtilde_n(R,I)\onto \SU_n(R,I)$ has a section, so it is surjective on homology; thus $\SU_n(R,I)$ is similarly $q$-acyclic for $n\geq 2q+d+3$. By Remark~\ref{remark:transitivityrange}, the inclusion $\SPB_n(R,I)\into \SU_n(R,I)$ is $(n-d-2)$-connected. Since $n\geq 2q+d+3$ implies $n-d-2\geq 2q+1>q$, we conclude that 
\begin{equation}
\label{eq:SPBacyclic}
\text{$\SPB_n(R,I)$ is $q$-acyclic for $n\geq 2q+d+3$.}
\end{equation}
This means $\widetilde{\rH}_q(\SPB_n(R,I))=0$ for $n\geq 2q+d+3$; in other words, the $\FI$-module $\widetilde{\rH}_q(\SPB(R,I))$ has \[\deg \widetilde{\rH}_q(\SPB(R,I))\leq 2q+d+2.\]

Note that $\deg \rH_p(G;V)\leq \deg V$ for any $V$ (simply because $\rH_p(G;0)=0)$.
It follows immediately via the spectral sequence \[E^2_{pq}=\rH_p(\GL(R,I);\widetilde{\rH}_q(\SPB(R,I)))\implies \widetilde{\rH}^{\GL(R,I)}_{p+q}(\SPB(R,I))\] that $\deg \widetilde{\rH}_k^{\GL(R,I)}(\SPB(R,I))\leq 2k+d+2$. Applying Theorem~\ref{athm:CGammaSPB}, we conclude that
\begin{equation}
\label{eq:degHC}
\deg \HH_k(C_\bullet \GL(R,I))=\deg \widetilde{\rH}_{k-1}^{\GL(R,I)}(\SPB(R,I))\leq 2k+d,
\end{equation} as desired.
\end{proof}

We conclude this paper by proving Theorem~\ref{athm:connectivity}, whose statement we recall for convenience.
\begin{theoremDduplicate}
Given any $\ell>0$, for each $k>0$ we have \[\widetilde{\rH}_{k-1}(\SPB_{2k}(\Z/p^\ell,p);\F_p)\neq 0.\]
Given any number ring $\cO$ and any prime power $p^a>2$, for each $k>0$ we have
\[\text{\textrm{either }} \widetilde{\rH}_{k-1}(\SPB_{2k}(\cO,p^a);\F_p)\neq 0\qquad\text{or}\qquad\widetilde{\rH}_{k-1}(\SPB_{2k+1}(\cO,p^a);\F_p)\neq 0.\]
\end{theoremDduplicate}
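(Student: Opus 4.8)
The plan is a proof by contradiction that runs the machinery of Sections~\ref{secB}--\ref{secSPB} backwards. If the indicated reduced homology group of $\SPB$ vanished, then $\SPB_n(R,I)$ would be strictly \emph{more} acyclic than Charney's bound provides, and chasing this extra acyclicity through the implications ``connectivity of $\SPB(R,I)$ $\Rightarrow$ bound on $\bt_k(C_\bullet\GL(R,I))$ $\Rightarrow$ bound on $\delta(\rH_k(\GL(R,I)))$ $\Rightarrow$ degree of the Hilbert polynomial of $\dim_{\F_p}\rH_k(\GL_n(R,I);\F_p)$'' would force that polynomial to have degree $\leq 2k-1$, contradicting a lower bound available in the literature. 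So I would first record this conditional upper bound and then quote the lower bound, and collide them.

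For the conditional upper bound: fix $k>0$. Note that $\Z/p^\ell$ is Artinian, hence semilocal, so it satisfies $\SR_2$ ($d=0$), whereas a number ring is Dedekind, so it satisfies $\SR_3$ ($d=1$); this numerical difference is precisely why the number-ring statement has an ``either/or''. Writing $(R,I)$ for the relevant pair, suppose $\widetilde{\rH}_{k-1}(\SPB_{2k}(R,I);\F_p)=0$, and in the number-ring case suppose also $\widetilde{\rH}_{k-1}(\SPB_{2k+1}(R,I);\F_p)=0$. Charney's connectivity result (Remark~\ref{remark:transitivityrange}, \eqref{eq:SPBacyclic}) already gives $\widetilde{\rH}_{k-1}(\SPB_n(R,I);\F_p)=0$ for $n\geq 2(k-1)+d+3$, and the assumption upgrades this to vanishing for all $n\geq 2k$, i.e.\ $\deg\widetilde{\rH}_{k-1}(\SPB(R,I);\F_p)\leq 2k-1$. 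Since Charney also gives $\deg\widetilde{\rH}_q(\SPB(R,I);\F_p)\leq 2q+d+2$ for $q\leq k-2$, the equivariant spectral sequence $E^2_{pq}=\rH_p(\GL(R,I);\widetilde{\rH}_q(\SPB(R,I);\F_p))$ together with $\deg\rH_p(G;V)\leq\deg V$ forces $\deg\widetilde{\rH}^{\GL(R,I)}_{k-1}(\SPB(R,I);\F_p)\leq 2k-1$. By the $\F_p$-coefficient form of Theorem~\ref{athm:CGammaSPB} this quantity equals $\bt_k(C_\bullet(\GL(R,I);\F_p))$, so Theorem~\ref{thm:growth-bounds}\ref{part:deltaLEt} gives $\delta(\rH_k(\GL(R,I);\F_p))\leq 2k-1$; as the groups $\rH_k(\GL_n(R,I);\F_p)$ are finite-dimensional (finite groups in one case, congruence subgroups of number rings in the other, cf.\ Part~(a) of Application~\ref{athm:congruence-subgroup}), Proposition~\ref{prop:over-fields} then produces a polynomial of degree $\leq 2k-1$ equal to $\dim_{\F_p}\rH_k(\GL_n(R,I);\F_p)$ for $n\gg0$.

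For the lower bound I would invoke the computations flagged in the introduction. When $R=\Z/p^\ell$ with $\ell\geq2$ (the case $\ell=1$ being degenerate, the congruence subgroup being trivial), $\GL_n(\Z/p^\ell,p)$ is a finite uniformly powerful $p$-group of rank $n^2$, so by Lazard and Browder--Pakianathan its mod-$p$ cohomology is a truncated exterior-times-polynomial algebra on $n^2$ generators in degrees $1$ and $2$; its degree-$k$ piece has dimension $\sim\binom{n^2}{k}\sim n^{2k}/k!$. When $R=\cO$ is a number ring and $I=p^a\cO$ with $p^a>2$ (so that the $p$-adic congruence subgroup is uniform even at $p=2$), $\GL_n(\cO\otimes\Z_p,p^a(\cO\otimes\Z_p))$ is uniform pro-$p$ of dimension $[\cO:\Z]\,n^2$, with continuous cohomology $\Lambda^\ast(\F_p^{[\cO:\Z]n^2})$ by Lazard, and Calegari's comparison of the homology of the arithmetic congruence subgroup with this completed homology forces $\dim_{\F_p}\rH_k(\GL_n(\cO,p^a\cO);\F_p)\gg_n n^{2k-1}$ (ratio $\to\infty$). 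Either way the Hilbert polynomial has degree $\geq 2k$, contradicting the conditional upper bound; hence the vanishing assumption fails, which is exactly the theorem.

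The routine portion is the bookkeeping in the conditional step — one must only take care to apply the $\F_p$-coefficient forms of Theorems~\ref{athm:CGammaSPB} and \ref{thm:growth-bounds} and of Proposition~\ref{prop:over-fields}, and to use $d=0$ (not $d=1$) for $\Z/p^\ell$, without which the bound does not improve and no contradiction is obtained. The real obstacle is the number-ring lower bound: Calegari's results describe the $n=\infty$ \emph{stable} (completed) homology, whereas what is actually needed is a lower bound for the finite-level group $\GL_n(\cO,p^a\cO)$ valid for a fixed $k$ as $n\to\infty$, so one must carefully quantify how the finite-level homology compares to the stable/completed picture (this is where strong approximation and the congruence subgroup property behind Calegari's argument enter). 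By contrast, the $\Z/p^\ell$ lower bound is comparatively soft.
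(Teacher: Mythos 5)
Your proposal reproduces the paper's proof almost step for step: the same contradiction scheme, the same conditional upper bound $\deg \widetilde{\rH}^{\Gamma}_{k-1}(\SPB(R,I);\F_p)\le 2k-1$ extracted from the hypothesized vanishing plus Charney's acyclicity, the same chain through Theorem~\ref{athm:CGammaSPB}, Theorem~\ref{thm:growth-bounds}\ref{part:deltaLEt} and Proposition~\ref{prop:over-fields} to get $\dim\rH_k(\Gamma_n;\F_p)=O(n^{2k-1})$, and the same external lower bounds (Browder--Pakianathan in Case~A, Calegari in Case~B). The only cosmetic difference in the upper-bound step is that you push the degree estimate row-by-row through the equivariant spectral sequence, whereas the paper uses the surjection $\widetilde{\rH}_{k-1}(\SPB_{2k}(R,I);\F_p)\onto (V_k)_{2k}$ supplied by $(k-2)$-acyclicity of $\SPB_{2k}$ together with the a priori bound $\deg V_k\le 2k+d$ from \eqref{eq:degHC}; these are equivalent. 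Your Case~A lower bound and your remark that $\ell=1$ is degenerate also match the paper.

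Two loose ends remain in the number-ring case, both of which the paper handles explicitly. First, the lower bound available in the literature is for $\SL_n(\cO,p^a)$, not $\GL_n(\cO,p^a)$; the paper therefore runs the entire argument for $\Gamma=\SL(\cO,p^a)$, observing that $X_{\SL(\cO,p^a)}$ and $X_{\GL(\cO,p^a)}$ differ only in top dimensions so all degree bounds persist. As written, your conditional upper bound (for $\GL$) and your quoted lower bound (for $\SL$) concern different groups, and deducing one from the other via the extension $1\to\SL_n\to\GL_n\to Q_n\to 1$ is not immediate. Second, the mechanism behind the finite-level estimate $\dim\rH^k(\SL_n(\cO,p^a);\F_p)=\Theta(n^{2k})$ is not strong approximation or the congruence subgroup property per se, but the Hochschild--Serre spectral sequence $\rH^p(G_n;W^q_n)\Rightarrow\rH^{p+q}(\Gamma_n;\F_p)$ for the uniform $p$-adic group $G_n=\SL_n(\cO_p,p^a)$, combined with Lazard's computation $\rH^*(G_n;\F_p)\cong\bwedge^*(\F_p^{D(n^2-1)})$ and Calegari--Emerton's theorem that $W^q_n$ is eventually independent of $n$ with trivial action: the corner term $E_2^{k,0}$ has dimension $\Theta(n^{2k})$ while everything that could interact with it is $O(n^{2k-2})$. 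This is exactly \cite[Lemma~4.5, Remark~4.7]{Calegari} (whose statements, as the paper notes, omit the needed hypothesis $p^a>2$), so citing it is legitimate, but it is the genuinely nontrivial input you correctly flagged and left unproved.
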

We remark that the same nonvanishing results apply to $\SU_n(\Z/p^\ell,p)$ and $\SUtilde_n(\Z/p^\ell,p)$ when $k>1$, since the inclusion $\SPB_{2k}(\Z/p^\ell,p)\into \SU_{2k}(R,I)$ is $(2k-2)$-connected. Similarly, the same results apply to $\SU_n(\cO,p^a)$ and $\SUtilde_n(\cO,p^a)$ when $k>2$.\begin{proof}
We first check that all these complexes are $(k-2)$-acyclic. We noted in \eqref{eq:SPBacyclic} that $\SPB_n(R,I)$ is $q$-acyclic for $n\geq 2q+d+3$. All the rings $R$ occuring in the proposition have dimension 0 or 1, so $d\leq 1$. Since $2k+1\geq 2k\geq 2(k-2)+d+3$, Charney's results show that all these complexes are $(k-2)$-acyclic as claimed.

The structure of the proof is as follows. The theorem deals with two cases: Case~A, when $R=\Z/p^\ell$ and $I=pR$, and Case~B, when $R=\cO$ and $I=p^aR$ for  $p^a>2$. The details will be quite different in places, but the overall argument is the same, so we first outline the proof in general. Let $\Gamma=\GL(R,I)$ and \[V_k\coloneq \HH_k(C_\bullet (\Gamma;\F_p))\iso \widetilde{\rH}_{k-1}^{\Gamma}(\SPB(R,I);\F_p).\]  In both Case~A and B, we will show that if the theorem were false for a certain $k$, we could prove the upper bound \begin{equation}
\label{eq:degVkbound}
\deg V_k=\deg \widetilde{\rH}_{k-1}^{\Gamma}(\SPB(R,I);\F_p)\leq 2k-1.
\end{equation}
 (This argument is the first place  the two cases diverge.) By Theorem~\ref{thm:growth-bounds}\ref{part:deltaLEt} and Corollary~\ref{cor:homologyofchainscoeff}, we have \[\delta(\rH_k(\GL_n(R,I);\F_p))\leq \deg V_k.\]
Thanks to Proposition~\ref{prop:over-fields}, the bound $\delta(\rH_k(\GL_n(R,I);\F_p))\leq 2k-1$ would imply the upper bound $\dim \rH_k(\GL_n(R,I);\F_p)=O(n^{2k-1})$ as $n\to \infty$. We will then derive a contradiction by showing in both cases that known results imply 
\begin{equation}
\label{eq:dimHkTheta}
\dim \rH^k(\GL_n(R,I);\F_p)=\Theta(n^{2k})
\end{equation} for all $k$. (This is the second place  the two cases diverge.) To complete the proof, we must prove in both Case~A and Case~B that \eqref{eq:degVkbound} holds if the theorem is false, and prove \eqref{eq:dimHkTheta} in both Case~A and Case~B.

Proving \eqref{eq:degVkbound} in Case~A: Since $R=\Z/p^\ell$ has Krull dimension 0, we have $d=0$. Thus from \eqref{eq:degHC} we know that $\deg V_k\leq 2k$, so let us consider this $\FI$-module in degree $2k$.
Since the complex $\SPB_{2k}(R,I)$ is $(k-2)$-acyclic, we have an isomorphism
\[\rH_0(\Gamma_{2k};\widetilde{\rH}_{k-1}(\SPB_{2k}(R,I);\F_p))\iso \widetilde{\rH}^{\Gamma_{2k}}_{k-1}(\SPB_{2k}(R,I);\F_p)=(V_k)_{2k}.\]
In particular, we have a surjection 
\[\widetilde{\rH}_{k-1}(\SPB_{2k}(R,I);\F_p)\onto (V_k)_{2k}.\] Therefore if the theorem were false and $\widetilde{\rH}_{k-1}(\SPB_{2k}(R,I);\F_p)=0$ for a certain $k$, we would have $(V_k)_{2k}=0$ and thus $\deg V_k\leq 2k-1$. This verifies \eqref{eq:degVkbound} in Case~A.

The proof of \eqref{eq:degVkbound} in Case~B is very similar, except that since $R=\cO$ has Krull dimension 1, we only know from \eqref{eq:degHC} that $\deg V_k\leq 2k+1$. Just as above we have surjections $\widetilde{\rH}_{k-1}(\SPB_{2k}(R,I);\F_p)\onto (V_k)_{2k}$ and $\widetilde{\rH}_{k-1}(\SPB_{2k+1}(R,I);\F_p)\onto (V_k)_{2k+1}$. Therefore if the theorem were false and both these homology groups vanished for a certain $k$, we would have $(V_k)_{2k}=(V_k)_{2k+1}=0$ and thus $\deg V_k\leq 2k-1$. This verifies \eqref{eq:degVkbound} in Case~B.

\bigskip
The remainder of the paper consists of the proof of \eqref{eq:dimHkTheta}.
First, let us consider the simplest case when $R=\Z/p^2$ and $I=pR$. In this case $\Gamma_n=\GL_n(\Z/p^2,p)$ is an elementary abelian group isomorphic to $(\Z/p)^{n^2}$, so the K\"unneth theorem implies that $\rH^*(\Gamma_n;\F_p)\iso \rH^*(\Z/p;\F_p)^{\otimes n^2}$. Since $\dim \rH^k(\Z/p;\F_p)=1$ for all $k$, we find that $\dim \rH^k(\Gamma_n;\F_p)$ is the coefficient of $t^k$ in $(1+t+t^2+\cdots)^{n^2}=\frac{1}{(1-t)^{n^2}}$, namely $\binom{n^2 + k-1}{k}$. In particular, this shows that $\dim \rH^k(\Gamma_n;\F_p)=\Theta(n^{2k})$. 

For $R=\Z/p^\ell$ in general, $\Gamma_n=\GL_n(\Z/p^\ell,p)$ is a non-abelian $p$-group. However, results on $p$-central groups imply that it nevertheless has the same cohomology as $(\Z/p)^{n^2}$, see Browder--Pakianathan~\cite[Corollary 2.34]{BrowderPakianathan}. Therefore as before we have $\dim \rH^k(\GL_n(\Z/p^\ell,p);\F_p)=\Theta(n^{2k})$. This finishes the proof in Case~A when $R=\Z/p^\ell$. (To extend the theorem  from $I=pR$ to $I=p^aR$ in this case, all that would be necessary is to show $\dim \rH^k(\GL_n(\Z/p^\ell,p^a);\F_p)\ \gg_n\ n^{2k-1}$. Such estimates may well already be known. Note that when $a\geq \ell/2$ this group is abelian, so this bound holds in that case.)

\medskip
We now turn to Case~B, when $R=\cO$ is a number ring. In this case for technical reasons we work with $\Gamma=\SL(\cO,p^a)$ rather than $\Gamma'=\GL(\cO,p^a)$. The complex $X_\Gamma$ agrees with $X_{\Gamma'}$ except in the very top dimensions, so all the bounds above work the same way. In particular, just as above, if the theorem were false we would have $\dim \rH_k(\SL_n(\cO,p^a);\F_p)=O(n^{2k-1})$. This contradicts the recent results of Calegari~\cite[Lemma 4.5, Remark 4.7]{Calegari}, which imply that this dimension is $\Theta(n^{2k})$. Note, however, that the statements there omit the hypothesis that $p^a > 2$. In order to be self-contained, and because the argument is short, we take this opportunity to summarize the argument of Calegari.

Let $\cO_p$ denote the $p$-adic completion of the number ring $\cO$. To address the cohomology of $\Gamma_n=\SL_n(\cO,p^a)$, we must first understand the continuous cohomology of the corresponding congruence group $G_n=\SL_n(\cO_p,p^a)$. Suppose the number ring $\cO$ has degree $D$. The pro-$p$ group $G_n$ is a compact $p$-adic analytic group of dimension $D(n^2-1)$, and our assumption that $p^a>2$ guarantees that it is torsion-free and uniformly $p$-powerful. The work of Lazard thus implies that $G_n$ is a Poincar\'e duality group of dimension $D(n^2-1)$ for continuous cohomology with $\F_p$ coefficients; in fact,
\begin{equation}
\label{eq:cohomology-p-adic}
\rH^*(G_n;\F_p)\iso \bwedge^* \rH^1(G_n;\F_p)\iso \bwedge^* ({\F_p}^{D(n^2-1)}).
\end{equation}
In particular, $\dim \rH^k(G_n;\F_p)=\binom{D(n^2-1)}{k}=\Theta(n^{2k})$. (Note that throughout, $\rH^*(G_n;M)$ denotes the \emph{continuous} cohomology of the profinite group $G_n$. If we knew $\dim \rH^k(G_n;\F_p)=\Theta(n^{2k})$ for the \emph{discrete} cohomology we could add the case $R=\cO_p$ to the theorem; alternately, the argument bounding $\delta(\rH_k(G;\F_p))$ could perhaps be modified to work with continuous cohomology.)
See \cite{SymondsWeigel} for a very readable overview of the cohomology of $p$-adic analytic groups; \eqref{eq:cohomology-p-adic} appears as \cite[Theorem 5.1.5]{SymondsWeigel}.

To connect this back to the arithmetic group, let $W^q_n$ denote the ``cohomology at infinite level'' \[W^q_n=\lim_{\substack{\longrightarrow\\r}}\rH^q(\SL_n(\cO,p^r);\F_p).\] Note that $W^q_n$ naturally inherits an action of $\lim_{\leftarrow}\SL_n(\cO/p^r)=\SL_n(\cO_p)$, which in fact extends to an action of $\SL_n(\cO_p\otimes \Q)$ via Hecke operators (though we will not really need this).  There is a Hochschild--Serre spectral sequence
\[E_2^{pq}=\rH^p(G_n; W^q_n)\quad\implies\quad \rH^{p+q}(\Gamma_n;\F_p)\]

The main result of Calegari--Emerton~\cite[Theorem~1.1]{CalegariEmerton} is that for $n\gg q$, the vector space $W^q_n$ is independent of $n$ and $\SL_n(\cO_p\otimes \Q)$ acts trivially on it (so in particular, so does $G_n$). This means that for sufficiently large $n$ we can (in a range of cohomological degrees) write this spectral sequence as 
\[E_2^{pq}=\rH^p(G_n;\F_p)\otimes W^q \quad\implies\quad \rH^{p+q}(\Gamma_n;\F_p)\]

The focus of Calegari's paper is the determination (as far as possible) of the stable cohomology groups $W^q$. But he points out that even without knowing anything, this spectral sequence allows us to estimate the dimension of $\rH^k(\Gamma_n;\F_p)$. From the computation of $\rH^k(G_n;\F_p)$ above, and the fact that $W^q=W^q_n$ does not depend on $n$, we find that $\dim E_2^{pq}=\Theta(n^{2p})$. In particular, for fixed $k$ the dimension of those $E_2^{pq}$ with $p+q=k$ is dominated by $E_2^{k0}=\rH^k(G_n;\F_p)$ whose dimension is $\Theta(n^{2k})$. All other terms in this string, as well as all those which could map to $E_r^{k0}$, have dimensions which are $O(n^{2k-2})$. Therefore without knowing anything about the behavior of this spectral sequence, we can conclude that $\dim \rH^k(\Gamma_n;\F_p)=\dim \rH^k(\SL_n(\cO,p^a);\F_p)=\Theta(n^{2k})$. This conclusion is \cite[Lemma~4.5]{Calegari} for $\cO=\Z$ and \cite[Remark~4.7]{Calegari} in general, except the hypothesis $p^a>2$ is missing from both (and beware a typo in the latter, where $N^{2kd}$ should be $d^kN^{2k}$). This completes the proof.
\end{proof}




\begin{thebibliography}{CEFN}



\bibitem[Ba]{Ba} Hyman Bass, $K$-theory and stable algebra, \emph{Inst. Hautes \'Etudes Sci. Publ. Math.} {\bf 22} (1964), 5--60. 

\bibitem[BP]{BrowderPakianathan} William Browder and Jonathan Pakianathan,  Cohomology of uniformly powerful p-groups.
 \emph{Trans. Amer. Math. Soc.}  {\bf 352}  (2000),  no. 6, 2659--2688.

\bibitem[Ca]{Calegari} Frank Calegari, The stable homology of congruence subgroups, \emph{Geom. Topol.} {\bf 19} (2015), 3149--3191. \arxiv{1311.5190}.

\bibitem[CaEm]{CalegariEmerton} Frank Calegari and Matthew Emerton, Homological stability for completed homology,
 \emph{Math. Ann.}  {\bf 364}  (2016),  no. 3-4, 1025--1041.

\bibitem[Char]{Charney} Ruth Charney, On the problem of homology stability for congruence subgroups, \emph{Comm. Algebra} {\bf 12} (1984), no. 17-18, 2081--2123. 

\bibitem[Ch]{Ch} Thomas Church, Homological stability for configuration spaces of manifolds, \emph{Invent. Math.} {\bf 188} (2012), no. 2, 465--504. \arxiv{1103.2441}.

\bibitem[Ch2]{church-bounding} Thomas Church, Bounding the homology of $\FI$-modules, \arxiv{1612.07803v1}.

\bibitem[CE]{castelnuovo-regularity} Thomas Church and Jordan S.~Ellenberg, Homology of FI-modules, \emph{Geom. Topol.}  {\bf 21}-4 (2017), 2373--2418. \arxiv{1506.01022}.

\bibitem[CEF]{fimodules} Thomas Church, Jordan S.~Ellenberg, and Benson Farb, FI-modules and stability for representations of symmetric groups, {\it Duke Math. J.} {\bf 164} (2015), no. 9, 1833--1910. \arxiv{1204.4533}.

\bibitem[CEFN]{fi-noeth} Thomas Church, Jordan S. Ellenberg, Benson Farb, and Rohit Nagpal, FI-modules over Noetherian rings, {\it Geom. Topol.} {\bf 18} (2014) 2951--2984. \arxiv{1210.1854}.

\bibitem[Dja]{djament} Aur\'{e}lien Djament, Des propri\'{e}t\'{e}s de finitude des foncteurs polynomiaux, {\it Fund. Math.} {\bf 233} (2016), 197--256. \arxiv{1308.4698}.

\bibitem[Dja2]{djament2} Aur\'{e}lien Djament. M\'{e}thodes fonctorielles pour l'\'{e}tude de l'homologie stable des groupes. Universit\'{e} de Nantes, Facult\'{e} des sciences et des techniques., 2017. Available at \url{http://www.math.sciences.univ-nantes.fr/~djament/HDR.pdf}.

\bibitem[Dja3]{djament3} Aur\'{e}lien Djament, On stable homology of congruence groups, \arxiv{https://arxiv.org/abs/1707.07944}.

\bibitem[DV]{djament-vespa} Aur\'{e}lien Djament and Christine Vespa, Foncteurs faiblement polynomiaux, {\em Int. Math. Res. Notices}, to appear. \arxiv{1308.4106v5}.


\bibitem[EWG]{EWG}
Jordan S. Ellenberg and John D. Wiltshire-Gordon, Algebraic structures on cohomology of configuration spaces of manifolds with flows, \arxiv{1508.02430v1}.

\bibitem[Gab]{gabriel} Pierre Gabriel, Des cat\'egories ab\'eliennes, {\it Bull. Math. Soc. France} {\bf 90} (1962), 323--448.

\bibitem[Gan]{gan-sequence} Wee Liang Gan, A long exact sequence for homology of $\FI$-modules, {\it New York J. Math.}, {\bf 22} (2016), 1487--1502. \arxiv{1602.08873}.

\bibitem[GL]{gan-li} Wee Liang Gan and Liping Li, Linear stable range for homology of congruence subgroups via $\FI$-modules, \arxiv{1712.00890}.


\bibitem[KM]{KM} Alexander Kupers and Jeremy Miller,  Representation stability for homotopy groups of configuration spaces, {\em J. Reine Angew. Math.}, to appear. \arxiv{1410.2328}.

\bibitem[KMT]{KMT} Alexander Kupers, Jeremy Miller, and TriThang Tran, Homological stability for symmetric complements, {\em Trans. Amer. Math. Soc.} {\bf 368} (2016), no. 11, 7745–7762. \arxiv{1410.5497}.


\bibitem[Li]{li} Liping Li, Upper bounds of homological invariants of $\FI_G$-modules, {\em Arch. Math. (Basel)} {\bf 107} (2016), no. 3, 201--211. \arxiv{1512.05879}.

\bibitem[LR]{li-ramos} Liping Li and Eric Ramos,  Depth and the local cohomology of $\FI_G$-modules, \arxiv{1602.04405v3}.

\bibitem[L\"u]{transformation} Wolfgang~L\"uck, {\em Transformation Groups and Algebraic K-Theory}, Lecture Notes in Mathematics {\bf 1408}, Mathematica Gottingensis, Springer-Verlag, Berlin, 1989.

\bibitem[MW]{MW} Jeremy Miller and Jennifer C.\,H. Wilson, Higher order representation stability and ordered configuration spaces of manifolds, \arxiv{1611.01920v1}.

\bibitem[Nag1]{nagpal} Rohit Nagpal, FI-modules and the cohomology of modular $S_n$-representations, \arxiv{1505.04294v1}.

\bibitem[Nag2]{vimod} Rohit Nagpal, $\mathrm{VI}$ modules in non-describing characteristic, Part I, \arxiv{1709.07591}.

\bibitem[NSS]{regthm} Rohit Nagpal, Steven Sam, and Andrew Snowden, Regularity of $\FI$-modules and local cohomology, \arxiv{1703.06832v2}.

\bibitem[PW]{PW} Peter Patzt and Xiaolei Wu, Stability results for Houghton groups, {\it Algebr. Geom. Topol.} {\bf 16} (2016) no. 4, 2365—2377. \arxiv{1509.07639v2}.

\bibitem[Pe]{Pe} Dan Petersen, A spectral sequence for stratified spaces and configuration spaces of points, {\em Geom. Topol.}, to appear, \arxiv{1603.01137}.

\bibitem[Pow]{powell} Geoffrey M.\,L.~Powell, The structure of indecomposable injectives in generic representation theory, \emph{Trans. Amer. Math. Soc.} {\bf 350} (1998) no. 10, 4167--4193.

\bibitem[Pu]{Pu}
Andrew Putman, Stability in the homology of congruence subgroups, \emph{Invent. Math.} {\bf 202},(2015) no. 3, 987--1027. \arxiv{1201.4876}.


\bibitem[Ram1]{ramos} Eric Ramos, Homological invariants of $\FI$-modules and $\FI_G$-modules, \arxiv{1511.03964v3}.

\bibitem[Ram2]{ramos-coh} Eric Ramos, On the degree-wise coherence of $\FI_G$-modules, \arxiv{1606.04514v1}.

\bibitem[SS]{symc1}
Steven V Sam and Andrew Snowden, GL-equivariant modules over polynomial rings in infinitely many variables, {\it Trans. Amer. Math. Soc.} {\bf 368} (2016), 1097--1158, \arxiv{1206.2233}.

\bibitem[SyW]{SymondsWeigel} Peter Symonds and Thomas Weigel, Cohomology of $p$-adic analytic groups. 
\emph{New horizons in pro-p groups}, 
 349--410, Progr. Math., 184, Birkh\"{a}user Boston, Boston, MA,  2000. 


\bibitem[Tos]{Tos} Philip Tosteson, Lattice Spectral Sequences and Cohomology of Configuration Spaces, \arxiv{1612.06034}.

\bibitem[Tot]{Tot} Burt Totaro, Configuration spaces of algebraic varieties. \emph{Topology}, {\bf 35},(1996) no. 4,  1057--1067.

\bibitem[vdK]{vdK}
Wilberd van der Kallen, Homology stability for linear groups, \emph{Invent. Math.} {\bf 60},(1985) no. 3, 269--295.

\bibitem[Wei]{weibel} Charles~Weibel, {\em An introduction to homological algebra}, Cambridge Studies in Advanced Mathematics {\bf 38}, Cambridge University Press, Cambridge, 1994.



\end{thebibliography}

\end{document}